\def\techreport{XXX}
\newtheorem{theorem}{Theorem}[section]
\newtheorem{lemma}[theorem]{Lemma}
\newtheorem{proposition}[theorem]{Proposition}
\newtheorem{remark}[theorem]{Remark}
\newcommand\supp{\,\textnormal{supp}}
\newcounter{remark}
\newcommand\R{\mathbb{R}}
\def \Csize{C_{\rm sizing}}
\def \bzero{{\bf 0}}
\def \bx{{\bf x}}
\def \by{{\bf y}}
\def \bz{{\bf z}}
\def \bu{{\bf u}}
\def \bv{{\bf v}}
\def \ba{{\bf a}}
\def \cM{{\cal M}}
\def \coM{\overline{\cal M}}
\def \cT{{\cal T}}
\def \cK{{\cal K}}
\def \cQ{{\cal Q}}
\def \d{\,{\rm d}}
\def \p{\partial}
\def \irn{\int_{\R^d}}
\newcommand\size{\textnormal{size}}
\newcommand\diam{\,\textnormal{diam}}
\newcommand\dist{\,\textnormal{dist}}
\newcommand{\vn}[1]{\left|\left|#1\right|\right|}
\newcommand{\vsn}[1]{\left|#1\right|}
\newcommand{\lpn}[3]{\vn{#1}_{L^{#2}(#3)}}
\newcommand{\wmpn}[4]{\vn{#1}_{W^{#2,#3}(#4)}}
\newcommand{\wmpsn}[4]{\vsn{#1}_{W^{#2,#3}(#4)}}
\newcommand{\comment}[1]{}
\title{Average Interpolation Under\\ the Maximum Angle Condition}
\author{Alexander Rand\thanks{Institute for Computational Engineering and Sciences, The University of Texas at Austin, 201 East 24th St, Stop C0200, Austin, Texas 78712 ({\tt arand@ices.utexas.edu})}}
\begin{document}

\maketitle





\begin{abstract}
Interpolation error estimates needed in common finite element applications using simplicial meshes typically impose restrictions on the both the smoothness of the interpolated functions and the shape of the simplices. 
While the simplest theory can be generalized to admit less smooth functions (e.g., functions in $H^1(\Omega)$ rather than $H^2(\Omega)$) and more general shapes (e.g., the maximum angle condition rather than the minimum angle condition), existing theory does not allow these extensions to be performed simultaneously.
By localizing over a well-shaped auxiliary spatial partition, error estimates are established under minimal function smoothness and mesh regularity. 
This construction is especially important in two cases: $L^p(\Omega)$ estimates for data in $W^{1,p}(\Omega)$ hold for meshes without any restrictions on simplex shape, and $W^{1,p}(\Omega)$ estimates for data in $W^{2,p}(\Omega)$ hold under a generalization of the maximum angle condition which requires $p>2$ for standard Lagrange interpolation. 
\end{abstract}

\ifthenelse{\isundefined{\techreport}}{
\begin{keywords}
finite element method, average interpolation, shape regularity
\end{keywords}

\begin{AMS} 65N15, 65N30 \end{AMS}

\pagestyle{myheadings}
\thispagestyle{plain}
\markboth{A. RAND}{AVERAGE INTERPOLATION / MAXIMUM ANGLE CONDITION}
}{
}


Interpolation error estimates for the standard finite element spaces typically involve two requirements: conditions on the smoothness of the function being interpolated ({\em data regularity}) and constraints on the geometry of the elements of the mesh ({\em shape regularity}). 
Beyond the simplest interpolation error estimates (stated precisely in Proposition~\ref{pr:classicalinterp}), data or shape regularity can be substantially relaxed with some additional analysis. However the existing theory does not accept the weakest restrictions on both data and shape regularity simultaneously.  

\paragraph{Data Regularity and Average Interpolation} Lagrange interpolation involves point-wise function evaluation leading to data regularity restrictions associated with the requirements of the Sobolev embedding theorem.  Alternatively, average interpolation can be used to handle less regular data~\cite{Cl75,SZ90,AFW06,Sc08,CW08}.  
Because average interpolation `smears' local estimates over a neighborhood of simplices, error estimates rely more heavily on the geometry of the mesh, often needing meshes of bounded ply (defined in Section~\ref{ss:meshes}, see Figure~\ref{fg:badrefinement}) so global summation can be performed.  

\paragraph{Shape Regularity and Angle Conditions} For triangular meshes the classical error estimates are typically proved under a minimum angle restriction, but this condition is overly restrictive.  
Triangle geometry and interpolation error are actually related through the largest angle of the triangle \cite{Sy57,BA76,Ja76}.  
(Convergence of the finite element method can occur for meshes with arbitrarily large angles~\cite{BHKK11,HKK12}, but the known counterexamples are finite element spaces containing \emph{subspaces} associated with a shape regular mesh.) 
The maximum angle condition can be generalized to higher-dimensional simplices but existing analysis gives stronger data regularity requirements~\cite{Ja76,Kr92,Sh94}.  
The linear tetrahedral element is especially important to this discussion.  Shenk constructed a counterexample demonstrating that the expected Lagrange interpolation error estimates do not hold for a fairly innocuous family of narrow tetrahedra~\cite{Sh94}; see Fig.~\ref{fg:coplanarity}(right).
Analysis using an average interpolant eliminates this problem but again leads to new geometric restrictions on the ply of the mesh~\cite{Ac01}. 

\paragraph{Mesh Generation}

Geometric restrictions needed to prove interpolation error estimates are essentially the output requirements for a mesh generator. 
In 2D many mesh generation techniques aim to produce bounded aspect ratio triangulations (although a notable exception \cite{MPS07} only removes large angles), but aspect ratio guarantees are impossible when the input (i.e., the boundary of the domain) contains small angles~\cite{Sh02}. 
To admit arbitrary input several mesh generation strategies have been developed which allow a few poor quality triangles near small input angles~\cite{MPW03,PW05,Sh02,RW09}.
Guided by the interpolation theory the most commonly used strategy~\cite{MPW03} provides provable upper bounds on the largest angle and the ply of the mesh (as long as the input does not include high degree vertices). 
This effective approach does not extend to 3D and guaranteed mesh generation algorithms do not yield a bound on the ply~\cite{Li03,SG05,Si06,CDL07,RW08,RW09}.
Figure~\ref{fg:mesh}(left) demonstrates the high ply construction in the analogous 2D algorithm.
Failures of these algorithms are difficult to observe in practice since the mesh ply only grows logarithmically in the (inverse of the) size of the smallest angle, but they represent a theoretical disconnect between output guarantees of the mesh generator and the input requirements of numerical methods.
Beyond provably-correct mesh generation algorithms, simplified mesh generation algorithms are often practically successful, and allowing high ply meshes extends the range of inputs producing acceptable results. 
Uniform refinement of a structured grid in polar coordinates is perhaps the simplest situation yielding a high ply node; see Figure~\ref{fg:mesh}(right).
Providing the loosest set of geometric requirements in the interpolation theory gives mesh generators the most flexibility in practical applications as well as new challenges to improve existing guarantees under weaker requirements.


\paragraph{Outline} This paper demonstrates that there is no inherent trade-off between data regularity and shape regularity in interpolation of less regular Sobolev functions on simplicial meshes by developing a theory accepting minimal data and shape regularity requirements. 
The key insight is that error estimates cannot be localized over the meshes of high ply; rather a simpler auxiliary spatial partition must be used. 
Before describing this construction, Section~\ref{sc:prelim} contains the necessary preliminary discussion including a more formal exposition of the prior results.  
Error estimates for the average interpolant under uniform size and quality meshes are shown in Section~\ref{sc:uniform}.  
A generalization to non-uniform meshes is given in Section~\ref{sc:nonuniform} before some concluding remarks in Section~\ref{sc:final}.

\section{Preliminaries}\label{sc:prelim}
\setcounter{remark}{0}

\begin{figure}
\begin{tabular}{cm{.015\textwidth}c|cm{.015\textwidth}c}
\parbox{.2\textwidth}{\includegraphics[width=.2\textwidth]{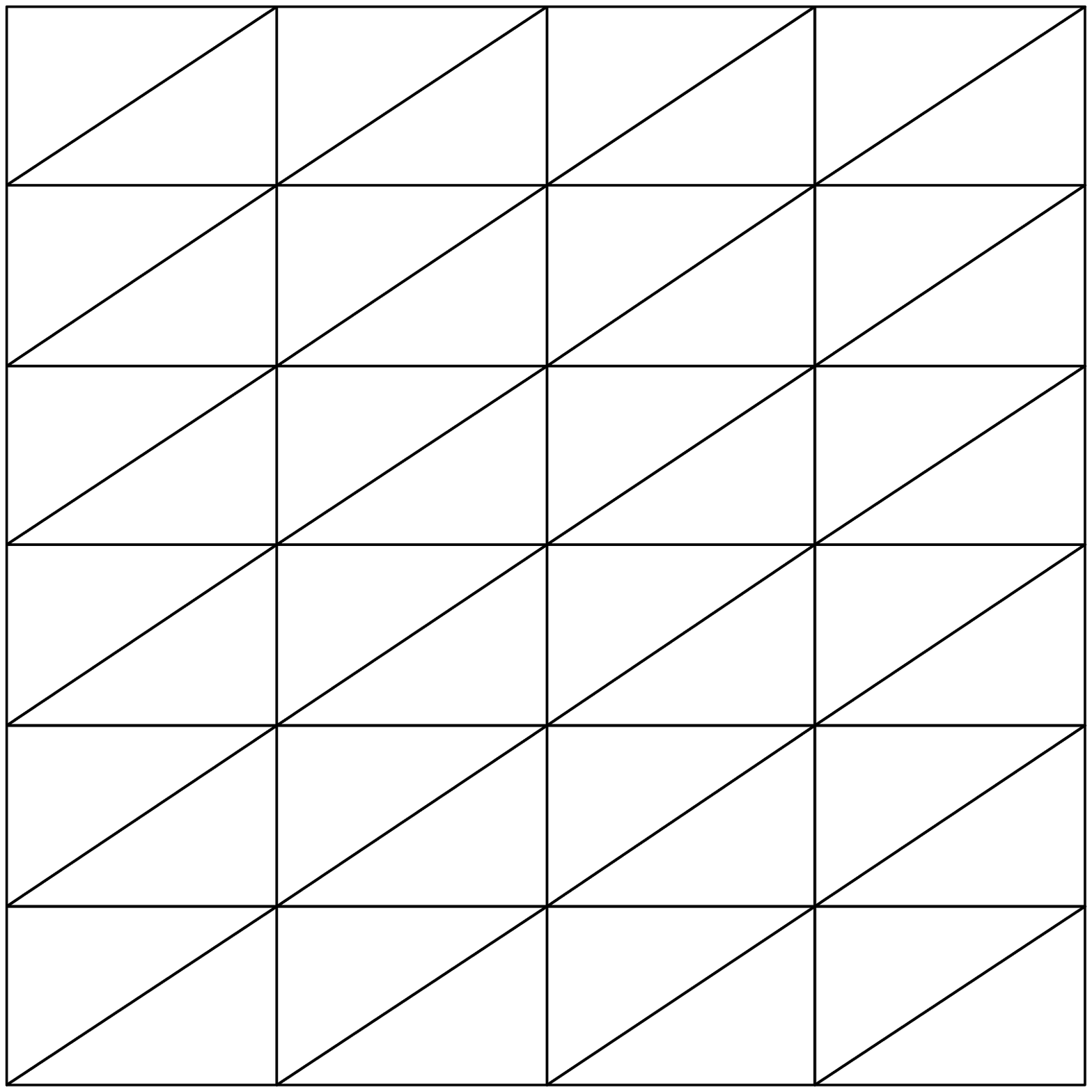}}
& \parbox{.1\textwidth}{\hspace{-.13in}{\Huge {\bf $\Rightarrow$ }}} &
\parbox{.2\textwidth}{\includegraphics[width=.2\textwidth]{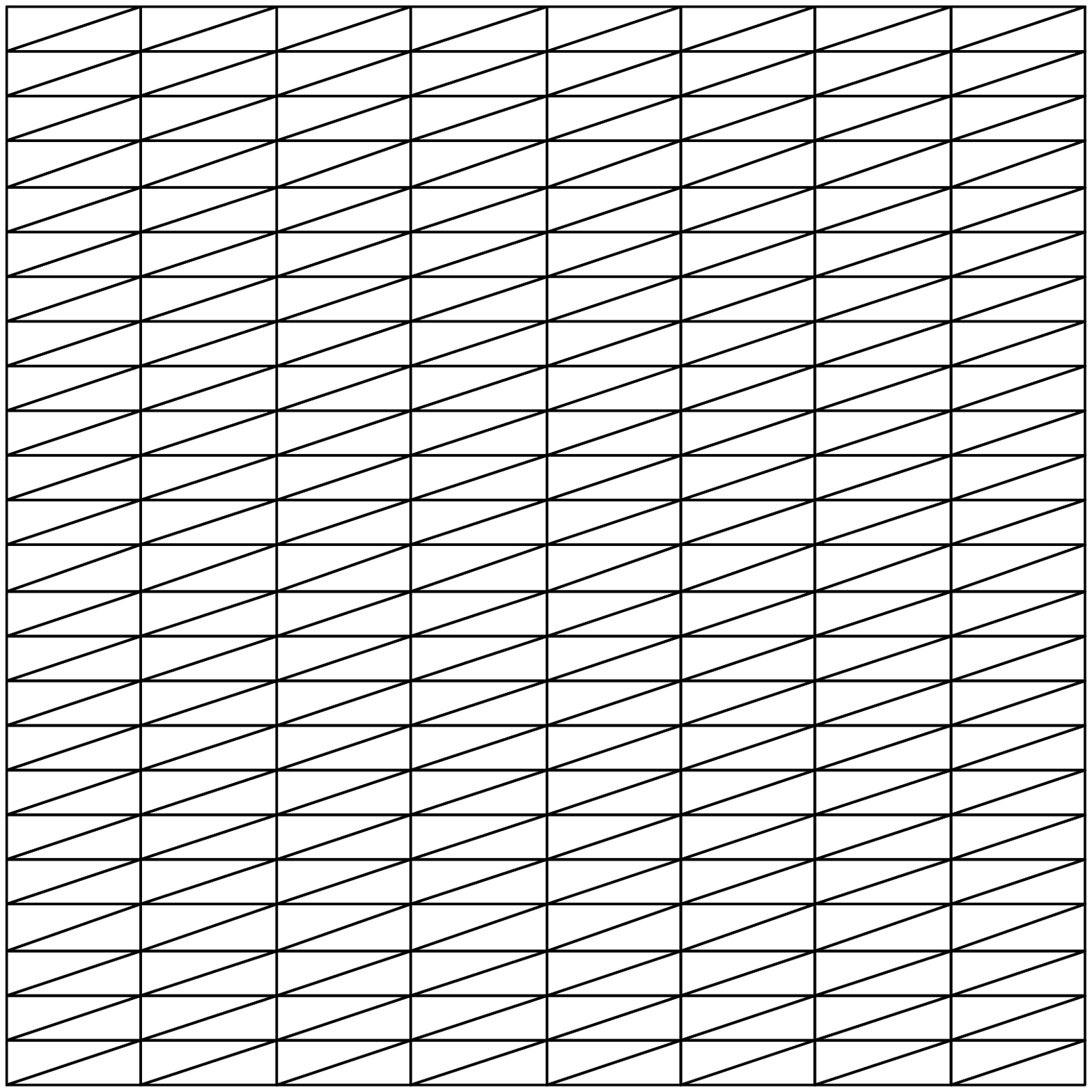}} &
\parbox{.2\textwidth}{\includegraphics[width=.2\textwidth]{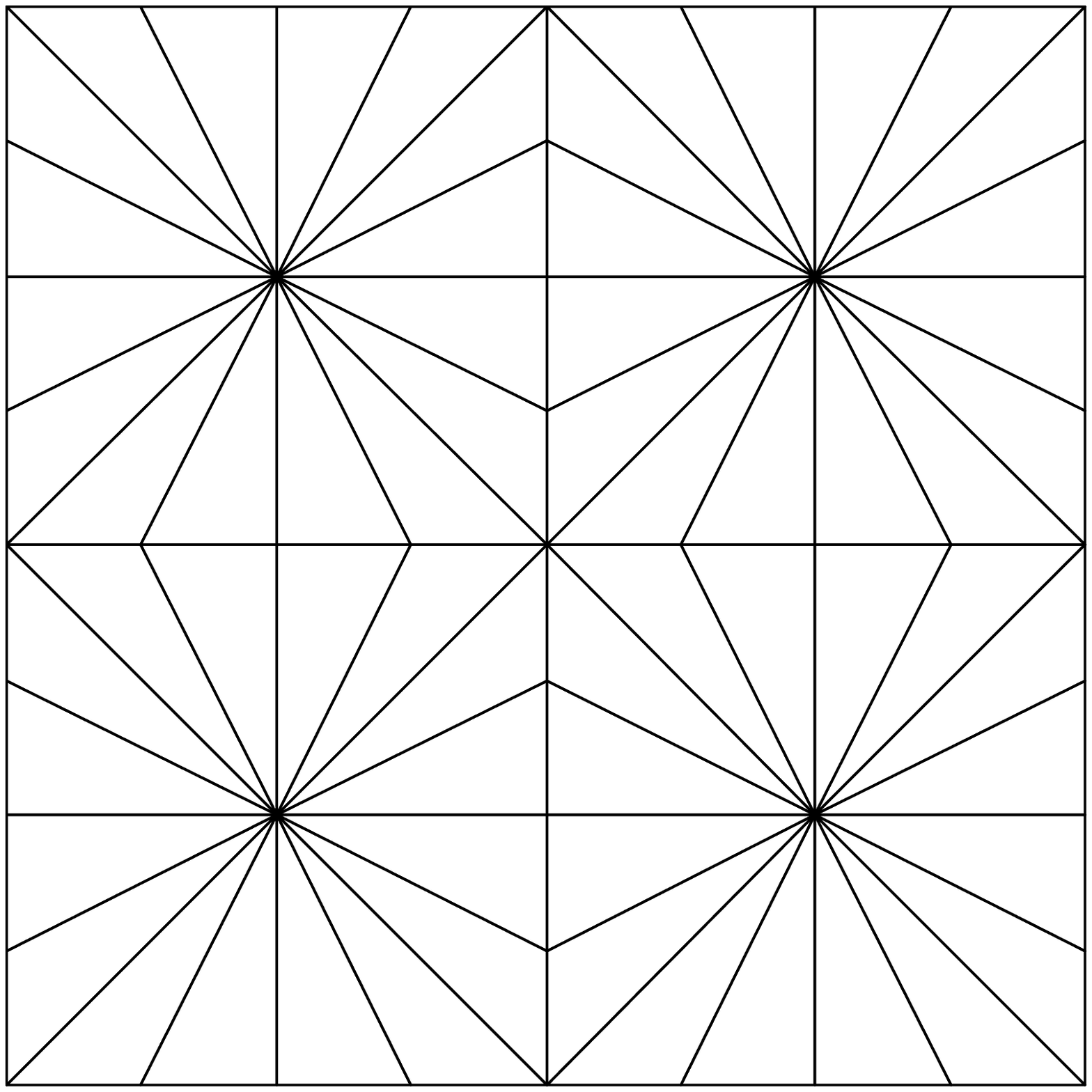}}
& \parbox{.1\textwidth}{\hspace{-.13in}{\Huge {\bf $\Rightarrow$ }}} &
\parbox{.2\textwidth}{\includegraphics[width=.2\textwidth]{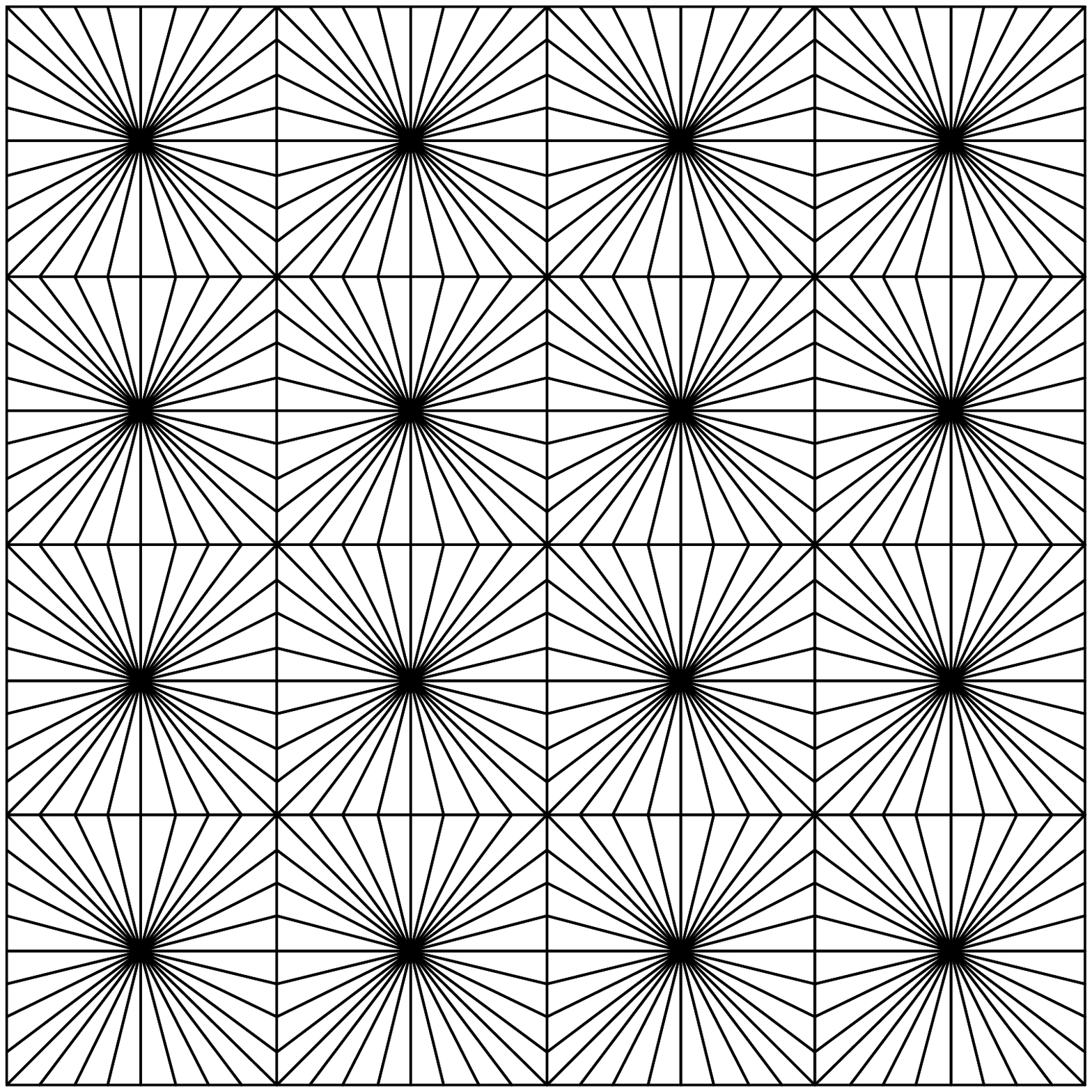}}
\end{tabular}
\caption{Two refinement strategies that do not produce bounded aspect ratio meshes. (left) A refinement strategy creating a mesh with bounded ply and maximum angle: all vertices have degree six and all angles are acute. (right) A refinement strategy creating a high ply mesh without large angles: all angles are smaller than $135$ degrees: in the limiting case, the triangles in the corners contain angles very near $135^\circ$, $45^\circ$ and $0^\circ$.}\label{fg:badrefinement}
\end{figure}

\begin{figure}
\begin{tabular}{cc}
\parbox{.43\textwidth}{
\includegraphics[width=.41\textwidth]{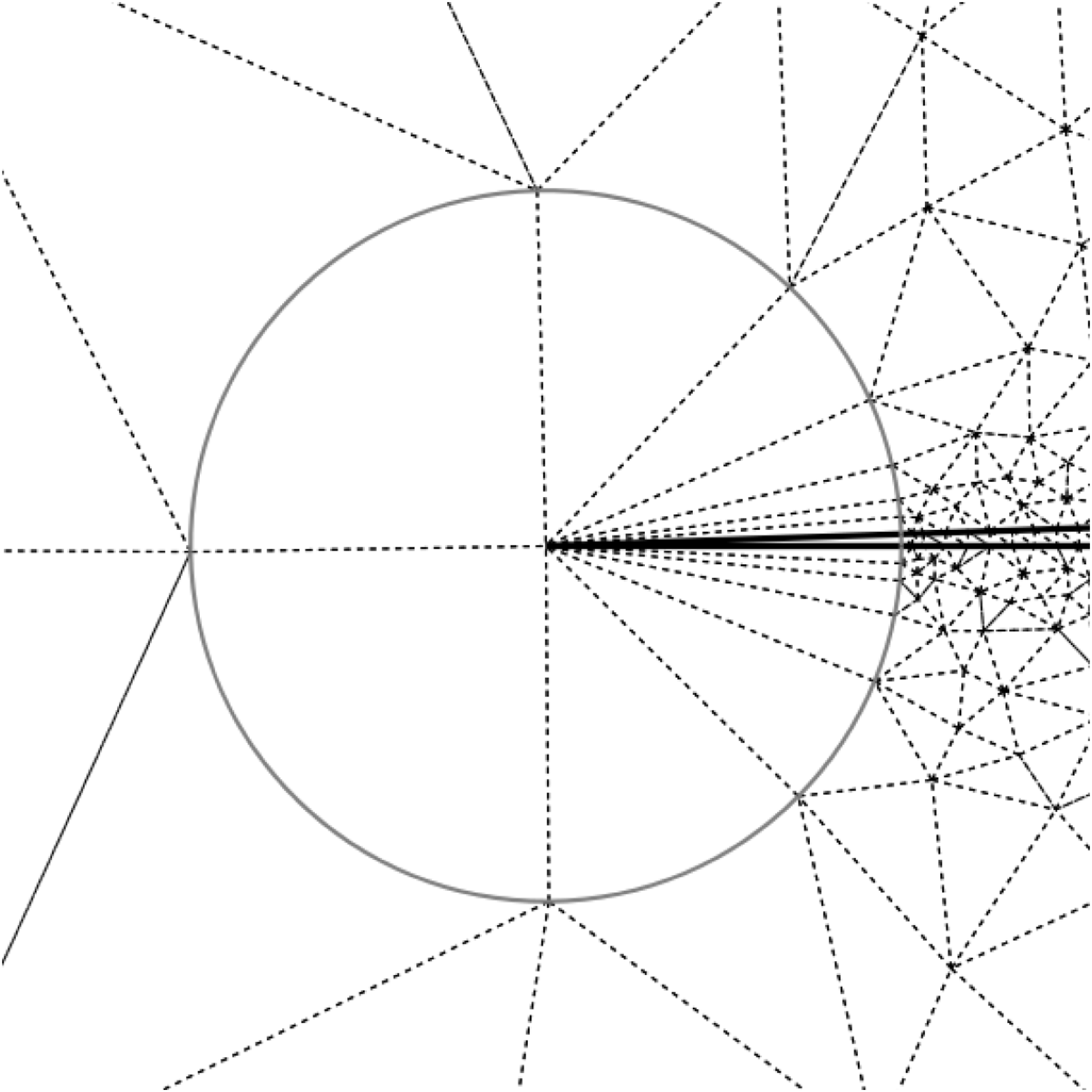}} &
\parbox{.53\textwidth}{
\includegraphics[width=.52\textwidth]{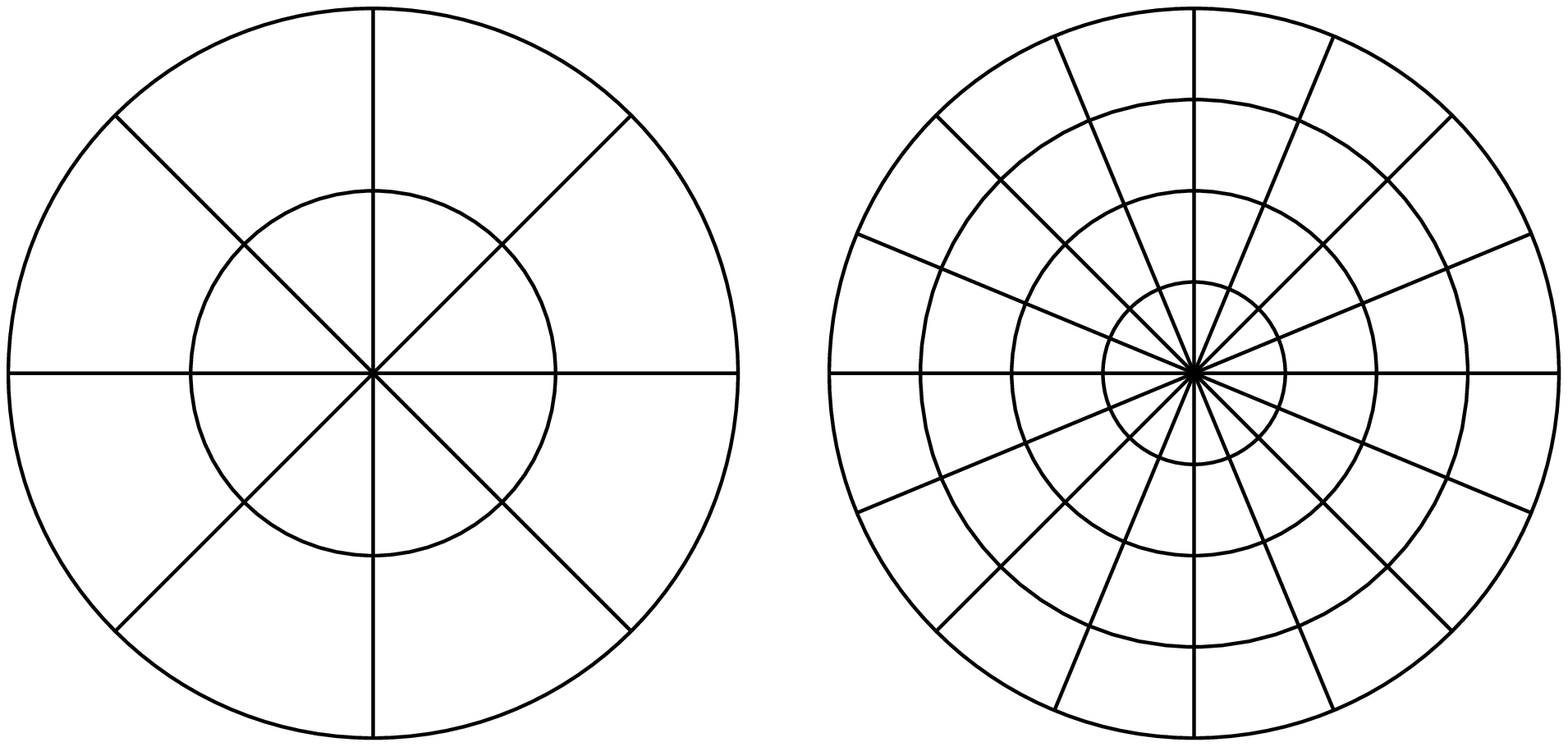}

\caption{(left) A quality mesh generated for an input with a very small angle. Certain mesh generation strategies can produce high ply meshes at vertices (or along edges in 3D) near small input angles. (above) ``Uniform'' refinement in polar coordinates leads to a high ply at the origin.}\label{fg:mesh}
}
\end{tabular}
\end{figure}

The statement  $x \apprle y$ is used to mean there exists a positive constant $C$ such that $x \leq C\, y$ when the details of the constant are unimportant.  For nearly all estimates discussed, this implied constant will be independent of simplex/mesh shape regularity; whenever this is not the case, it will be explicitly noted.  

\subsection{Balls, Cubes, and Simplices}\label{ss:basic}

The \emph{ball} of radius $r$ centered at $\bx\in \R^d$ is the set of points with Euclidean distance from $\bx$ less than $r$: 
\[
B(\bx,r) := \{ \by \, : \, \vsn{\bx-\by}<r\}.
\]
A $d$-cube or \emph{cube} is a closed set $Q\subset \R^d$ of the form
\[
Q = \left[a_1-\frac{h}{2},a_1+\frac{h}{2}\right] \times \left[a_2-\frac{h}{2},a_2+\frac{h}{2}\right] \times \ldots \times \left[a_d-\frac{h}{2},a_d+\frac{h}{2}\right].
\]
The point $\ba = (a_1,\ldots,a_d)$ is called the \emph{center} of $Q$ and $h$ is called the \emph{size} of $Q$, denoted $\size(Q)$.  
A \emph{simplex} is the convex hull of $d+1$ non-coplanar points (called \emph{vertices}), $\{\bv_i\}_{i=0}^d$:
\[
K = \left\{ \sum_{i=0}^d t_i \bv_i \,:\, t_i > 0, \sum_{i=0}^d t_i = 1\right\}.
\]
We let $h_K$ denote the length of the longest side of $K$ and $\rho_K$ denote the radius of the largest sphere inscribed in $K$.  
Following Jamet's shape regularity condition~\cite{Ja76}, the angle $\theta_K$ is defined to characterize the quality of a simplex $K$ by the formula,
\begin{equation}\label{eq:copla}
\theta_K := \max_{\vsn{\bm{\xi}}=1} \min_{i} \, \arccos \bm{\xi} \cdot \bu_i
\end{equation}
where $\{\bu_i\}$ is the set of unit vectors parallel to the edges of the simplex $K$; see Fig.~\ref{fg:coplanarity}.  
This angle measures how far from coplanar the set of edges is, so we call $\theta_K$ the {\em coplanarity} measure.  The subscript on $h$, $\rho$, and $\theta$ is often omitted when the simplex in question is apparent.  In two dimensions $\theta_K$ is half the largest angle of the triangle, and thus the set of triangles with coplanarity bounded away from $\pi/2$ satisfy the maximum angle condition in the sense that all angles are bounded away from $\pi$. 
In three dimensions, a more intuitive generalization of the maximum angle condition defined by defined by K\v{r}\'{\i}\v{z}ek~\cite{Kr92} requires that both face and dihedral angles of the tetrahedron are bounded away from $\pi$. 
This is also equivalent to bounding coplanarity away from $\pi/2$ as shown in the proposition below.


\begin{proposition}\label{pr:jametmac}
Let $\psi_K$ denote the maximum of all face and dihedral angles of (three-dimensional) tetrahedron $K$. For tetrahedra, bounds on coplanarity and face/dihedral angles are equivalent, i.e., 
if $\theta_K$ is bounded away from $\pi/2$ then $\psi_K$ is bounded away from $\pi$ and vice versa.
\end{proposition}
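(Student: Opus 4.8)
The plan is to prove the two implications separately, using compactness/quantitative estimates relating the two angle quantities. Both directions rest on the observation that $\theta_K$ small (bounded away from $\pi/2$) means there is \emph{no} direction $\bm{\xi}$ nearly orthogonal to all six edge directions, i.e., the edge directions do not cluster near a single plane; whereas $\psi_K$ bounded away from $\pi$ means no face angle and no dihedral angle is close to flat.

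For the direction ``$\psi_K$ bounded away from $\pi$ $\Rightarrow$ $\theta_K$ bounded away from $\pi/2$'', I would argue by contradiction using a compactness/limiting argument. Normalize $K$ so that $h_K = 1$ and (say) one vertex is at the origin; the space of such tetrahedra is precompact, so a sequence $K_n$ with $\theta_{K_n}\to\pi/2$ has a subsequence converging to a limiting ``tetrahedron'' $K_\infty$ (possibly degenerate). In the limit there is a unit vector $\bm\xi$ orthogonal to all edge directions that survive, so the limiting vertex set is coplanar, meaning $K_\infty$ is a degenerate (flat) tetrahedron. One then checks that any flat tetrahedron — i.e.\ four coplanar points with the nondegeneracy of the three-dimensional structure lost — must have either a face angle or a dihedral angle tending to $\pi$ (or $0$, which forces a complementary angle in the same face to approach $\pi$). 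Handling the bookkeeping of which edges collapse and which angles blow up is the routine-but-fiddly part; the clean way is to case-split on the combinatorial type of degeneration (one vertex approaching the plane of the other three, an edge shrinking, etc.).

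For the converse ``$\theta_K$ bounded away from $\pi/2$ $\Rightarrow$ $\psi_K$ bounded away from $\pi$'', I would again use contradiction plus the same normalization and compactness. If $\psi_{K_n}\to\pi$, then in the limit some face angle or dihedral angle is flat; in either case I claim the six limiting edge directions all lie within (or arbitrarily close to) a common plane, so picking $\bm\xi$ normal to that plane gives $\bm\xi\cdot\bu_i\to 0$ for all $i$, hence $\theta_{K_n}\to\pi/2$. The face-angle case is immediate (a flat vertex angle in face $F$ makes the three edges of $F$ collinear, hence trivially coplanar with everything once combined with the remaining edges — here one must be slightly careful and instead look at the whole tetrahedron degenerating). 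The dihedral-angle case is the one to check: a dihedral angle along edge $e$ approaching $\pi$ forces the two faces sharing $e$ to become coplanar, so \emph{all four} vertices become coplanar, giving the common plane. I would phrase both cases uniformly by showing $\psi_K \to \pi$ forces the four vertices toward coplanarity (after normalization), which is exactly the negation of ``$\theta_K$ bounded away from $\pi/2$''.

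The main obstacle I anticipate is the degeneration analysis in the first direction: ``$\theta_K$ near $\pi/2$'' gives coplanarity of the \emph{edge directions}, which is slightly weaker than coplanarity of the vertices when edges shrink, so one must track shrinking edges carefully and verify that a shrinking edge still forces a flat angle (the two edges adjacent to the vanishing edge in a face become nearly antiparallel, yielding an angle near $\pi$). A cleaner alternative that avoids compactness entirely is to derive explicit inequalities — e.g.\ bound $\max_i |\bm\xi\cdot\bu_i|$ below in terms of $\min(\pi-\psi_K)$ via elementary spherical trigonometry on the Gram matrix of the edge vectors — but this trades conceptual clarity for a longer computation, so I would present the compactness argument as the main proof and relegate any explicit constant to a remark.
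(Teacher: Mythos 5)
Your plan is qualitatively reasonable (the statement is scale-invariant, so a compactness argument over tetrahedra normalized to $h_K=1$ is a legitimate alternative to the paper's explicit inequalities), but both directions of your sketch have gaps exactly where the geometric content lies. For the direction ``$\psi_K$ bounded away from $\pi$ $\Rightarrow$ $\theta_K$ bounded away from $\pi/2$'', your mechanism for the collapsing-edge degenerations is false: if one edge of a face shrinks, the two adjacent edges become nearly \emph{parallel}, not antiparallel, and a needle triangle with one tiny side can have its two adjacent angles near $\pi/2$ and the opposite angle near $0$, so no face angle need approach $\pi$. Hence the case analysis of degenerate limits is not routine bookkeeping; it is the whole difficulty, and it is precisely here that the paper avoids any case analysis by invoking a quantitative result of Acosta et al.\ \cite{AADL2011}: with $m=\sin\frac{\pi-\psi}{2}$ there exist edge directions $\bu_1,\bu_2,\bu_3$ with $\vsn{(\bu_1\times\bu_2)\cdot\bu_3}\geq m^3$, from which a lower bound on $\max_i \vsn{\bm\xi\cdot\bu_i}$ for every unit $\bm\xi$ (and hence an upper bound on $\theta$ in terms of $\psi$) follows. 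If you want to keep the compactness route you must supply an argument covering limits in which edges collapse or vertices merge, and your current sketch does not.

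For the converse (``$\psi_K\to\pi$ $\Rightarrow$ $\theta_K\to\pi/2$''), your proposed uniform reduction --- show $\psi_K\to\pi$ forces the four vertices toward coplanarity, ``which is exactly the negation of $\theta_K$ bounded away from $\pi/2$'' --- identifies the wrong quantity: near-coplanarity of the \emph{vertices} is strictly weaker than near-coplanarity of the \emph{edge directions}, as Shenk's family $(0,0,0),(1,0,0),(0,1,0),(0,0,\epsilon)$ (the paper's Figure~\ref{fg:coplanarity}, right) shows: the vertices collapse to a plane while $\theta$ stays bounded away from $\pi/2$. You flag this subtlety for the other direction but then rely on the false identification here. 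Moreover the face-angle case is not ``immediate'': if the angle at $\bv_1$ in face $\bv_1\bv_2\bv_3$ is near $\pi$, the natural plane through the near-collinear triple and $\bv_4$ fails when $\bv_4$ lies very close to $\bv_1$ but transversally, so that the short edge $\bv_1\bv_4$ is nearly orthogonal to that plane; the paper's proof handles exactly this by choosing $\hat{\bm\xi}$ normal to the plane $\bv_1\bv_2\bv_4$ (with the normalization $\vsn{\bv_2-\bv_4}\leq\vsn{\bv_3-\bv_4}$) and then estimating the remaining edge $\bv_3\bv_4$ by a triangle-inequality argument, which is where the factor $2$ in $\arccos\left(2\cos(\psi-\pi/2)\right)$ comes from. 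The dihedral-angle case of your argument can be made to work (and the paper disposes of it with the cleaner bound $\theta\geq\psi/2$), but as written your proposal does not yet constitute a proof of either implication.
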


The proof, given in Appendix~\ref{ap:copmac}, relies on a characterization of tetrahedra satisfying the maximum angle condition given in \cite{AADL2011}.

\begin{figure}
\begin{center}
\psfrag{ximax}{$\xi_{\rm max}$}
\psfrag{theta}{$\theta$}
\psfrag{epsilon}{$\epsilon$}
\includegraphics[width=.8\textwidth]{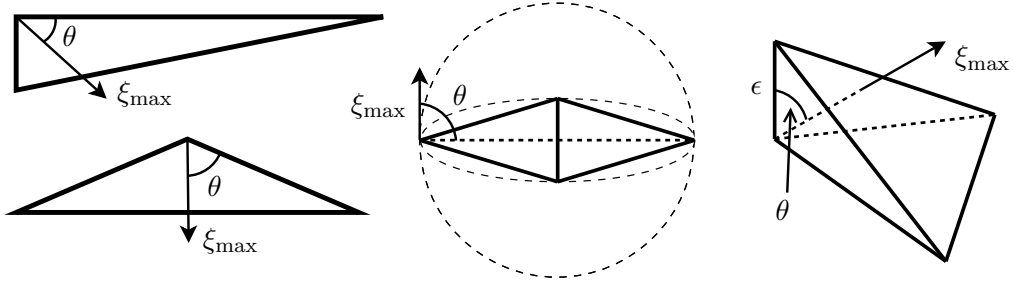}
\end{center}
\caption{Coplanarity $\theta$ for for several simplices with vector $\xi_{\rm max}$ which maximizes the expression in (\ref{eq:copla}) labeled. (left) In 2D the coplanarity is half the largest angle. (center) The coplanarity of a nearly flat ``sliver'' tetrahedron is nearly $\pi/2$. (right) The simplices used in a counter-example due to Shenk~\cite{Sh94} have coplanarity bounded away from $\pi/2$ even as the edge length $\epsilon$ approaches $0$.}\label{fg:coplanarity}
\end{figure}

\subsection{Sobolev Spaces}\label{ss:sobolev}
We will distinguish between the notation $\frac{\partial}{\partial x_i}$ and $\partial_i$. 
The former will be used to denote differentiation with respect to a particular variable while the latter represents differentiation with respect to a function argument.
For example, the following notation is used for an application of the chain rule: \[
\frac{\partial}{\partial x_i} \left[u(2\bx+3\by)\right] = 2\partial_i u(2\bx+3\by). 
\]
The Sobolev semi-norms and norms on functions over an open set $\Omega$ are defined by
\begin{align*}
\wmpsn{u}{m}{p}{\Omega}^p &:=  \int_\Omega \sum_{|\beta| = m} |\partial^\beta u(\bx)|^p \d \bx & & {\rm and} &
\wmpn{u}{m}{p}{\Omega}^p &:= \sum_{0\leq k\leq m}\wmpsn{u}{m}{p}{\Omega}^p,
\end{align*}
where the first summation is taken over multi-indices $\beta$ and differentiation with respect to multi-indices has the standard definition, $\partial^\beta u= \partial^{\beta_1} \partial^{\beta_2} \ldots u$.  
The $W^{0,p}$-norm is simply the standard $L^p$-norm.

Throughout this paper, $\Omega\subset \R^d$ is a bounded set and an extension domain for any Sobolev spaces used; i.e. there is a continuous, linear operator $E_\Omega: W^{m,p}(\Omega)\rightarrow W^{m,p}(\R^d)$ such that $u(x) = E_\Omega u(x)$ for all $x\in\Omega$; see \cite[p. 83]{Ad03} for technical results concerning extension domains.
Without ambiguity, the functions $u$ and $E_\Omega u$ will not be distinguished: for $x\notin \Omega$, $u(x)$ denotes $E_\Omega u(x)$.  Likewise, the estimate $\wmpsn{u}{m}{p}{\R^d} \apprle \wmpsn{u}{m}{p}{\Omega}$ is used freely.

\subsection{Lagrange Interpolation on Simplicial Meshes}\label{ss:meshes}
$\cT$ always denotes a simplicial mesh of $\Omega$.  
The \emph{ply} of the mesh $\cT$ is the maximum number of mesh simplices intersecting a single simplex.  Ply restrictions are often implicit in interpolation error estimates since a mesh with bounded aspect ratio has a bounded ply.  (In fact, the weaker restriction of bounded circumradius to shortest edge ratio is sufficient to ensure bounded ply~\cite{Ta97,HMP06}.)  

Let $\Pi_k$ denote the $k$-th degree Lagrange interpolant on a simplicial mesh,
\[
\Pi_k u(\bx) := \sum_i u(\bv_i) \phi_i(\bx)
\]
where $\phi$ are the piecewise nodal basis functions on $\cT$.  While this paper only considers linear interpolants ($k=1$), the higher order cases are included in the statement of classical estimates for smooth functions.

\subsection{Prior Interpolation Error Estimates}\label{ss:priorwork}
The Bramble-Hilbert Lemma \cite{BH70,DL04} along with scaling properties of Sobolev semi-norms leads to the following classical error estimate for the Lagrange interpolant; see~\cite{Ci02,EG04,BS08} for complete details.
\begin{proposition}\label{pr:classicalinterp}
If $k+1 > \frac{d}{p}$ and $0 \leq m \leq k$, then for all simplices $K$ and all $u\in W^{k+1,p}(K)$,
\begin{equation}\label{eq:classicalinterp}
\wmpsn{u - \Pi_k u}{m}{p}{K} \apprle \frac{h^{k+1}}{\rho^m}\wmpsn{u}{k+1}{p}{K}.
\end{equation}
\end{proposition}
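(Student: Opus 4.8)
The plan is to reduce the estimate to a fixed reference simplex by affine equivalence and then apply the Bramble--Hilbert lemma there. First I would record the two facts that make the statement meaningful. The hypothesis $k+1 > \frac{d}{p}$ gives the Sobolev embedding $W^{k+1,p}(K)\hookrightarrow C^0(\overline K)$, so evaluation at the interpolation nodes is a bounded linear functional and $\Pi_k u$ is well defined for every $u\in W^{k+1,p}(K)$. Moreover $\Pi_k$ is \emph{affine invariant}: if $F(\widehat\bx)=B\widehat\bx+\bt$ is the affine bijection from a fixed reference simplex $\widehat K$ onto $K$ and $\widehat u := u\circ F$, then $\widehat{\Pi_k u} = \widehat\Pi_k \widehat u$, where $\widehat\Pi_k$ denotes the degree-$k$ Lagrange interpolant on $\widehat K$; this holds because $\Pi_k$ reproduces polynomials of degree at most $k$ and commutes with composition by affine maps.

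On the reference simplex, the residual operator $\widehat u \mapsto \widehat u - \widehat\Pi_k \widehat u$ is bounded from $W^{k+1,p}(\widehat K)$ into $W^{m,p}(\widehat K)$ (the embedding above controls the nodal values) and it annihilates every polynomial $\widehat q$ of degree at most $k$. Writing $\widehat u - \widehat\Pi_k\widehat u = (I-\widehat\Pi_k)(\widehat u - \widehat q)$ and taking the infimum over such $\widehat q$ gives
\begin{equation*}
\wmpsn{\widehat u - \widehat\Pi_k\widehat u}{m}{p}{\widehat K} \apprle \inf_{\widehat q}\, \wmpn{\widehat u - \widehat q}{k+1}{p}{\widehat K} \apprle \wmpsn{\widehat u}{k+1}{p}{\widehat K},
\end{equation*}
where the second inequality is the Bramble--Hilbert (Deny--Lions) lemma and the constants depend only on $d$, $k$, $m$, $p$, and $\widehat K$, never on $K$.

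Finally I would transfer the estimate back to $K$ using the change-of-variables bounds for Sobolev seminorms under an affine map: for any integer $j\ge 0$ one has $\wmpsn{\widehat u}{j}{p}{\widehat K} \apprle \|B\|^{j}\,|\det B|^{-1/p}\,\wmpsn{u}{j}{p}{K}$ and, symmetrically, $\wmpsn{u}{j}{p}{K} \apprle \|B^{-1}\|^{j}\,|\det B|^{1/p}\,\wmpsn{\widehat u}{j}{p}{\widehat K}$. Combining these (with $j=m$ and $j=k+1$) and the affine invariance of $\Pi_k$ yields
\begin{equation*}
\wmpsn{u-\Pi_k u}{m}{p}{K} \apprle \|B^{-1}\|^{m}\,\|B\|^{k+1}\,\wmpsn{u}{k+1}{p}{K}.
\end{equation*}
It then remains to bound the linear-algebra factors geometrically. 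Choosing $\widehat K$ with diameter and inscribed-ball radius both of order one, a standard argument that pushes balls inscribed in and circumscribed about $\widehat K$ and $K$ through $F$ and $F^{-1}$ shows $\|B\|\apprle h_K$ and $\|B^{-1}\|\apprle \rho_K^{-1}$, which produces the asserted factor $h^{k+1}/\rho^{m}$.

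The main obstacle is the Bramble--Hilbert step: establishing that a $W^{k+1,p}$ function is approximable in the full $W^{k+1,p}$-norm by a polynomial of degree at most $k$ up to a constant multiple of its top-order seminorm. This is a Poincar\'e-type compactness argument on the reference domain and requires only that $\widehat K$ have a minimally regular (e.g.\ Lipschitz) boundary, which a simplex has. The affine bookkeeping and the bounds $\|B\|\apprle h_K$, $\|B^{-1}\|\apprle\rho_K^{-1}$ are routine but deserve care, since this is exactly the place where the shape-dependent factor $h^{k+1}/\rho^{m}$ enters --- the feature that distinguishes this ``classical'' estimate from the maximum-angle theory developed later in the paper.
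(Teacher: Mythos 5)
Your argument is correct and is exactly the route the paper indicates for this result: the Bramble--Hilbert lemma on a reference simplex combined with the affine scaling of Sobolev semi-norms (the paper itself does not prove the proposition but cites this standard argument). The hypothesis $k+1 > \frac{d}{p}$ is used precisely as you say, to make nodal evaluation (and hence $\widehat\Pi_k$) bounded on the reference element, and the bounds $\|B\|\apprle h_K$, $\|B^{-1}\|\apprle \rho_K^{-1}$ produce the shape-dependent factor $h^{k+1}/\rho^m$ as claimed.
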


For simplices with bounded aspect ratio, $h$ and $\rho$ are proportional and thus the factor $\frac{h^{k+1}}{\rho^m}$ reduces to $h^{k+1-m}$.  The estimate in the $L^2$-norm (i.e., the $m=0$ case) is particularly nice: the term in the denominator disappears and the estimate is independent of the shape of the simplex.  

The first generalization involves weakening the data regularity requirement, i.e., the restriction $u\in W^{2,p}(\Omega)$.
A function $u\in W^{1,p}(\Omega)$ (but not in $W^{2,p}(\Omega)$) is not necessarily continuous so an alternative to Lagrange interpolation must be considered.  Cl\'ement first described such an interpolant and proved an estimate which corresponds to the ($k=0$)-case of (\ref{eq:classicalinterp})~\cite{Cl75}.  
Scott and Zhang proved a similar estimate based on an average interpolant~\cite{SZ90}: specifically interpolant values at mesh vertices are defined to be averages over an arbitrarily selected adjacent simplex. 
A distinct advantage of the Scott and Zhang interpolant is that coefficients of basis functions corresponding to boundary nodes only depend upon the boundary data. 
An alternative approach based on averaging over balls has been developed which extends to vector-valued functions in the discrete de Rham complex~\cite{AFW06,Sc08,CW08}; see Figure~\ref{fg:acostavrand} for a comparison of the averaging regions.
The resulting interpolant of Christiansen and Winther also matches homogeneous boundary data selecting an appropriate extension of the function outside the domain.
In the simplest case, each of these constructions reduces to the following theorem.
\begin{theorem}[Specialized from \cite{SZ90,CW08}, etc.]\label{th:scottzhang}
Suppose $\cT$ has a bounded aspect ratio and let $\Pi_0$ be an average interpolant.  Then for all $u\in W^{1,p}(\Omega)$,
\begin{equation}\label{eq:scottzhang}
\lpn{u - \Psi_0 u}{p}{K} \apprle h \wmpsn{u}{1}{p}{\hat K},
\end{equation}
where $\hat K$ is the union of $K$ and its neighboring simplices. 
\end{theorem}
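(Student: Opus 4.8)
The plan is to combine three standard facts about an average interpolant: it reproduces constants, it is locally $L^p$-stable with the natural scaling, and a Poincar\'e inequality holds on the patch $\hat K$ with a constant controlled by the shape regularity of $\cT$. Throughout, write $h := \diam(\hat K)$; under the bounded aspect ratio hypothesis this is comparable to $h_K$ and to the size of every simplex in $\hat K$, so using the single symbol $h$ is legitimate.

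The first step is a local stability estimate. On a fixed simplex $K$ the interpolant can be written $\Pi_0 w(\bx) = \sum_i \lambda_i(w)\,\phi_i(\bx)$, where $i$ ranges over the $d+1$ vertices of $K$, $\phi_i$ are the affine nodal basis functions, and each $\lambda_i$ is an average of $w$ over some simplex (or ball) $\sigma_i\subset\hat K$ touching the $i$-th vertex. H\"older's inequality gives $|\lambda_i(w)| \apprle |\sigma_i|^{-1}\lpn{w}{1}{\sigma_i} \apprle |\sigma_i|^{-1/p}\lpn{w}{p}{\sigma_i}$, and since $|\sigma_i|\apprle h^d$ while $\lpn{\phi_i}{p}{K} \apprle |K|^{1/p}\apprle h^{d/p}$, summing the $d+1$ terms yields
\[
\lpn{\Pi_0 w}{p}{K} \apprle \lpn{w}{p}{\hat K}.
\]

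Second, because $\Pi_0$ reproduces constants ($\lambda_i(c)=c$ and $\sum_i\phi_i\equiv 1$ on $K$), for any constant $c$,
\[
\lpn{u-\Pi_0 u}{p}{K} = \lpn{(u-c)-\Pi_0(u-c)}{p}{K} \le \lpn{u-c}{p}{K} + \lpn{\Pi_0(u-c)}{p}{K} \apprle \lpn{u-c}{p}{\hat K},
\]
using the stability estimate above. Taking $c$ to be the mean of $u$ over $\hat K$ and invoking a Poincar\'e (equivalently, a zeroth-order Bramble--Hilbert/Deny--Lions) inequality on $\hat K$ gives $\lpn{u-c}{p}{\hat K}\apprle \diam(\hat K)\,\wmpsn{u}{1}{p}{\hat K} = h\,\wmpsn{u}{1}{p}{\hat K}$, which is the bound claimed in (\ref{eq:scottzhang}).

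The step that genuinely needs the shape hypothesis is the Poincar\'e inequality: its constant must depend only on the regularity of $\cT$, and this is where I expect the real work to lie. Since bounded aspect ratio forces bounded ply (Section~\ref{ss:meshes}), the patch $\hat K$ is a connected union of a uniformly bounded number of simplices of comparable size and quality; on such a domain the Poincar\'e estimate holds with a uniform constant, as one sees either by a chaining argument element-by-element through $\hat K$ or by noting that $\hat K$ is a John domain with uniform parameters. A secondary, purely bookkeeping point is to confirm constant reproduction and the scaling of $\lambda_i$ for the particular average interpolant in play (Scott--Zhang, or averaging over balls); in each case these follow immediately from the definition of the nodal functionals, so the cleanest exposition isolates them as the two abstract properties the argument actually uses.
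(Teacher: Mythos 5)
The paper does not prove this theorem at all---it is quoted as a specialization of the results of Scott--Zhang and Christiansen--Winther~\cite{SZ90,CW08}---and your argument is precisely the standard route those references take: local $L^p$-stability of the averaging functionals, reproduction of constants, and a patchwise Poincar\'e (Deny--Lions) inequality whose constant is controlled by shape regularity via bounded ply, comparable neighbor sizes, and a uniform chaining/John-domain constant on $\hat K$. The argument is sound; the one slip is in the stability step, where what you need is the lower bound $h^d \apprle |\sigma_i|$ (so that $|\sigma_i|^{-1/p}\lpn{\phi_i}{p}{K} \apprle 1$), not the upper bound $|\sigma_i| \apprle h^d$ as written---and that lower bound is exactly what the bounded aspect ratio hypothesis supplies, since each averaging simplex or ball in $\hat K$ has inradius comparable to $h$.
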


Unlike the ($m=0$, $k=1$)-case of Proposition~\ref{pr:classicalinterp}, Theorem~\ref{th:scottzhang} requires a bounded aspect ratio mesh ensuring two things. 
First, $K$ and its neighboring simplices have roughly the same size so that $\diam(K) \approx \diam(\hat K)$. 
Second, $K$ has a bounded ply so these local estimates (\ref{eq:scottzhang}) over each simplex can be summed to a global estimate.  
Our aim is to remove these restrictions on simplex shape to match the $m=0$ case of the classical estimate (\ref{eq:classicalinterp}) which is independent of simplex shape.

The second generalization of Proposition~\ref{pr:classicalinterp} weakens the shape regularity requirements.
For this purpose, Jamet developed an improved estimate for sufficiently regular data.
\begin{theorem}[Jamet, \cite{Ja76}]\label{th:jamet}
If $k + 1 - m > \frac{d}{p}$, then for all $u\in W^{k+1,p}(K)$,
\begin{equation}\label{eq:jamet}
\wmpsn{u - \Pi_k u}{m}{p}{K} \apprle \frac{h^{k+1 - m}}{(\cos \theta)^m}\wmpsn{u}{k+1}{p}{K}.
\end{equation}
\end{theorem}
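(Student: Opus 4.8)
The plan is to reduce the estimate to a family of one-dimensional estimates along the edge directions of $K$, with the coplanarity $\theta_K$ entering only through an elementary convexity argument. Let $\{\bu_i\}$ denote the set of edge unit vectors of $K$ (both orientations of each edge). The definition~(\ref{eq:copla}) says exactly that $\max_i \bm{\xi}\cdot\bu_i \ge \cos\theta_K$ for every unit vector $\bm{\xi}$ (with equality at the extremal $\bm{\xi}$ and inequality elsewhere, since the extremal direction maximizes $\min_i\arccos\bm{\xi}\cdot\bu_i$), i.e.\ the support function of the convex hull of $\{\bu_i\}$ is at least $\cos\theta_K$, whence that convex hull contains $B(\bzero,\cos\theta_K)$. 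Rescaling, every unit vector is of the form $\sum_i a_i\bu_i$ with $\sum_i|a_i|\le(\cos\theta_K)^{-1}$; applying this to each of the $m$ Cartesian directions appearing in a partial derivative of order $m$ gives, for every multi-index $\beta$ with $|\beta|=m$,
\[
\partial^\beta \;=\; \sum_{(i_1,\dots,i_m)} c_{i_1\cdots i_m}\,\partial_{\bu_{i_1}}\!\cdots\,\partial_{\bu_{i_m}},
\qquad \sum_{(i_1,\dots,i_m)}|c_{i_1\cdots i_m}| \;\le\; (\cos\theta_K)^{-m}.
\]
Summing the triangle inequality against these coefficients reduces (\ref{eq:jamet}) to the shape-\emph{independent} bound
\[
\bigl\|\,\partial_{\bu^{(1)}}\!\cdots\,\partial_{\bu^{(m)}}(u-\Pi_k u)\,\bigr\|_{L^p(K)} \;\apprle\; h^{\,k+1-m}\,\wmpsn{u}{k+1}{p}{K}
\]
for an arbitrary $m$-tuple $\bu^{(1)},\dots,\bu^{(m)}$ of edge directions of $K$.

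To establish this reduced estimate I would follow the Babu\v{s}ka--Aziz/Jamet strategy. Split $u-\Pi_k u=(u-q)-\Pi_k(u-q)$ with $q\in\mathcal{P}_k$ an averaged Taylor polynomial of $u$; the $(u-q)$ part is bounded by $h^{k+1-m}\wmpsn{u}{k+1}{p}{K}$ via a Bramble--Hilbert estimate on the \emph{convex} simplex $K$, whose constant depends only on $d,k,p$ and not on the shape of $K$. For the interpolation part, the key observation is that differentiating along an edge direction is insensitive to the poor conditioning of $K$: since the trace of $\Pi_k u$ on the edge $\bv_a\bv_b$ is exactly the one-dimensional degree-$k$ Lagrange interpolant of the trace of $u$, the quantity $\partial_{\bu^{(1)}}\!\cdots\,\partial_{\bu^{(m)}}(u-\Pi_k u)$ admits an integral representation through a one-dimensional kernel in the edge directions, and a Rolle-type argument (the $m$-fold edge-directional derivative of the error vanishes at $k+1-m$ points along each edge) upgrades the naive bound to the claimed power $h^{k+1-m}$. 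This Rolle step is precisely where the hypothesis $k+1-m>\tfrac dp$ is used: the embedding $W^{k+1-m,p}(K)\hookrightarrow C^0(K)$ makes $\partial_{\bu^{(1)}}\!\cdots\,\partial_{\bu^{(m)}}(u-\Pi_k u)$ continuous, so that its vanishing at those points is meaningful. (Since $m\ge 0$, this also forces $k+1>\tfrac dp$, so that $\Pi_k u$ is well defined in the first place, consistently with Proposition~\ref{pr:classicalinterp}.)

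The reduced estimate is the only real obstacle. A naive affine change of variables to a fixed reference simplex will \emph{not} do: the transformation of the $W^{m,p}$-seminorm of the error contributes a factor $\|(F')^{-1}\|^{m}\sim\rho_K^{-m}$ and merely reproduces the classical bound~(\ref{eq:classicalinterp}). One must instead exploit both the cancellations built into the interpolation operator (polynomial reproduction; e.g.\ $\sum_j\partial_{\bu}\phi_j\equiv 0$) and the one-dimensional structure along edges, so that only the diameter scale $h_K$ — never the inradius $\rho_K$ — survives. Carrying out the bookkeeping of the (possibly tiny) edge-length factors that appear when pulling edge-directional derivatives onto a one-dimensional reference configuration, and checking that these cancel against the Jacobian factors and the transformed $\wmpsn{u}{k+1}{p}{\cdot}$ so that only the single power $(\cos\theta_K)^{-m}$ and the benign $h^{k+1-m}$ remain, is the delicate point of the proof.
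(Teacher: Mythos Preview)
The paper does not prove Theorem~\ref{th:jamet}; it is quoted from Jamet~\cite{Ja76} as a black box and used as an ingredient in the later arguments, so there is no in-paper proof to compare your sketch against.

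Your outline does follow Jamet's strategy. The geometric reduction is exactly right: the coplanarity condition says the convex hull of the (oriented) edge directions contains $B(\bzero,\cos\theta_K)$, so each Cartesian unit vector is a combination of edge directions with $\ell^1$-mass at most $(\cos\theta_K)^{-1}$, and iterating $m$ times isolates the factor $(\cos\theta_K)^{-m}$ and leaves a shape-independent ``reduced estimate'' on edge-directional derivatives of the error. That is indeed the core of Jamet's proof.

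Two soft spots in your sketch of the reduced estimate are worth naming. First, the Rolle statement you invoke---$(u-\Pi_k u)$ vanishes at $k{+}1$ collinear Lagrange nodes along an edge, hence its $m$-th derivative along that edge vanishes at $k{+}1{-}m$ of them---is a statement about the \emph{pure} power $(\partial_{\bu})^m$ in the direction of that edge; it does not directly address the mixed products $\partial_{\bu^{(1)}}\!\cdots\partial_{\bu^{(m)}}$ produced by your decomposition. Jamet treats these by working with the full simplicial lattice of Lagrange nodes (equivalently, an iterated one-dimensional construction in several edge directions simultaneously), not a single edge at a time. Second, the embedding $W^{k+1-m,p}(K)\hookrightarrow C^0(K)$ you cite to make pointwise vanishing meaningful has, on a degenerate simplex $K$, a constant that \emph{does} depend on shape; the care you allude to in the last paragraph is precisely to arrange matters so that only one-dimensional embeddings along segments of length $\apprle h_K$ are ever invoked, and these are shape-free. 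Both points are surmountable along the lines you indicate, but the proof is not complete without them.
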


The restriction $k + 1 - m > \frac{d}{p}$ prevents Jamet's result from applying in certain cases including (perhaps the most important cases of) linear interpolation on triangles and tetrahedra: $k = m = 1$ when $d=2$ or $d=3$.  
The analysis of Babu\v{s}ka and Aziz \cite{BA76} yields the result for the linear triangular element (although care must be taken to derive the correct geometric factor $1/\cos \theta$ using this approach \cite{Kr91,GMW99,MS09,Ra09}).  Shenk generalized the approach of Babu\v{s}ka and Aziz to higher dimensions and, more importantly, demonstrated that (\ref{eq:jamet}) does not hold for the linear Lagrange interpolant on tetrahedra~\cite{Sh94}.  Shenk's example involves particularly simple geometry: a family of tetrahedra depicted in Figure~\ref{fg:coplanarity} with vertices $(0,0,0)$, $(1,0,0)$, $(0,1,0)$, and $(0,0,\epsilon)$ where $\epsilon$ approaches zero.  Demonstrating the error estimate for an analogous class of triangles (i.e., 2D simplices) is the crux of the argument of Babu\v{s}ka and Aziz.  

An analysis by Acosta demonstrated that expected error estimates can be extended to certain narrow tetrahedra (including those in Shenk's example) by using average (rather than Lagrange) interpolation~\cite{Ac01}.  
This method does allow data and shape regularity to be weakened simultaneously, but the analysis is inherently anisotropic: locally it must be possible to affinely transform the mesh into a shape-regular mesh.  
(This is a standard requirement in an anisotropic framework; e.g., \cite{Ap99,FP01,HK09}.) 
The resulting estimates expect the mesh to have a bounded ply so that local estimates can be summed to form global error estimates.  
Note that the meshes in Figure~\ref{fg:mesh} do not satisfy the requirements of \cite{Ac01}.

We will develop interpolation error estimates based on average interpolation under weakened data and shape regularity requirements.  Emphasis will be placed on avoiding any unnecessary element shape requirements which, most notably, allows meshes with unbounded ply (as is the case for Lagrange interpolants in Theorem~\ref{th:jamet}).  The result will be an interpolation theory which extends Jamet's minimal shape regularity requirements to less smooth data including the linear tetrahedral element.  

\section{Uniform Error Estimates}\label{sc:uniform}
\setcounter{remark}{0}
To avoid the limitations associated with pointwise function evaluation, an average interpolant is defined by mollifying the data before Lagrange interpolation:
\begin{equation}\label{eq:interpolant}
\Xi_{h}u(\bx) := \Pi_1 \cM_h u(\bx).
\end{equation}
In the next subsection, the standard mollification procedure $\cM_h$ is defined and a number of needed properties are given.  

{\it Remark \arabic{section}.\addtocounter{remark}{1}\Alph{remark}}.\label{rm:acosta}
This interpolant is used in~\cite{AFW06,Sc08,CW08}, but in that context, the mollification radius was selected to be small enough so that the averaging region stays within the neighboring triangles. Since we do not assume aspect ratio bounds, we cannot use this property.
Acosta uses essentially an anisotropic variant of this construction~\cite{Ac01}; see Figure~\ref{fg:acostavrand}.

\begin{figure}
\begin{center}
\includegraphics[width=.98\textwidth]{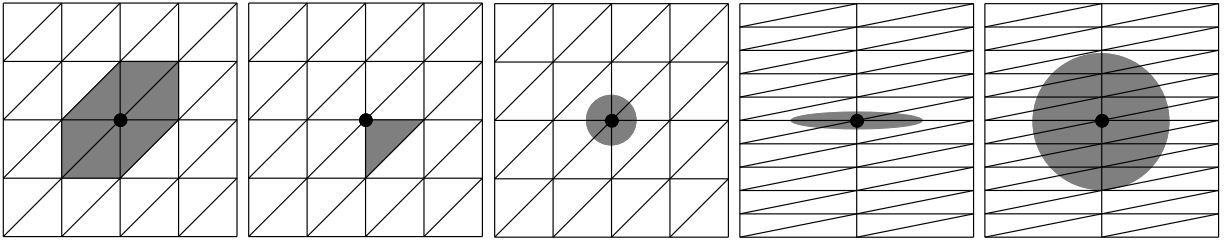}
\end{center}
\caption{
Averaging regions for the interpolants of (left to right) Cl\'ement, Scott and Zhang, Christiansen and Winther, Acosta, and this paper. The first three were developed for shape regular meshes. The final two are shown on an anisotropic mesh to highlight the differences in the construction. While the Acosta interpolant requires well-defined anisotropy in the mesh, our interpolant makes minimal assumptions on the mesh quality.
}\label{fg:acostavrand}
\end{figure}

\subsection{Mollification and Integration}\label{ss:mollifiers}
Mollification is a common tool in analysis for producing smooth approximations of functions in $L^p$ spaces. After defining the procedure, a few common facts about mollification are listed. 
The proofs of these and similar results can be found in a number of textbooks, but are given in the appendix for comparison to the later sections. 
For some examples of these kinds of technical details, especially those for more regular data and including explicit dependence on on the mollification radius, see \cite[p. 191]{Na73}, \cite[p. 58]{LL01}, \cite[p. 98]{MB02}, and \cite[p. 201]{FL07}. 

Let $\rho \in C^\infty(\R^d)$ such that (i) $\supp(\rho) \subset \overline{B(0,1)}$, (ii) $\int \rho(\bx) \d \bx = 1$, and
(iii) $\rho(\bx) \geq 0$ for all $\bx$.  
Let $\rho_h (\bx) = \frac{1}{h^d} \rho(\frac{\bx}{h})$.  Then, (i) $\supp(\rho_h) \subset \overline{B(0,h)}$, (ii) $\int \rho_h(\bx) \d \bx = 1$, (iii) $\rho_h(\bx) \geq 0$ for all $\bx$, and (iv) $\vn{\nabla \rho_h}_{L^1(\R^d)} \leq \frac{1}{h}\vn{\nabla \rho}_{L^1(\R^d)}$.
The most commonly used mollifier is $\rho(\bx) = c e^{-1/(1-\vn{\bx}^2)}$ where the constant $c$ is chosen so (ii) holds.
We also require that $\rho$ is \emph{radially symmetric}. 
While symmetry is not usually required, the common mollifier is symmetric and we explicitly use the property in one step of the upcoming arguments.

A function is mollified by convolving it with $\rho_h$: 
given $u\in L^1(\R^d)$, define 
$$
\cM_h u(\bx) := \int_{\R^d} u(\bx-\by) \rho_h(\by) \d \by = \int_{\R^d} u(\bx-h\bz) \rho(\bz)  \d \bz.
$$
The two representations above are equivalent, resulting from a simple change of variables, $\by=h\bz$, but the supports of the integrands are different. 
If a function is smooth enough (specifically, it belongs to $W^{1,p}(\Omega)$ in the proposition below) then the difference between the function and its mollification can be bounded in terms of the mollification radius.  
\begin{proposition}\label{pr:dermol}
If $p \geq 1$, then for all  $u\in W^{1,p}(\Omega)$,
\[\lpn{u - \cM_h u}{p}{\R^d} \apprle h \wmpsn{u}{1}{p}{\Omega}.\]
\end{proposition}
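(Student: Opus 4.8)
The plan is to reduce everything to the first-order Taylor-type estimate for a single point and then integrate against the mollifier. Starting from the second representation $\cM_h u(\bx) = \int_{\R^d} u(\bx - h\bz)\rho(\bz)\d\bz$ and using property (ii) of $\rho$, write
\[
u(\bx) - \cM_h u(\bx) = \int_{\R^d} \bigl(u(\bx) - u(\bx - h\bz)\bigr)\rho(\bz)\d\bz.
\]
For $u\in W^{1,p}(\Omega)$ (extended to $\R^d$, so $u\in W^{1,p}(\R^d)$), the difference $u(\bx)-u(\bx-h\bz)$ equals the line integral $\int_0^1 h\,\bz\cdot\nabla u(\bx - t h\bz)\d t$; this identity holds for smooth $u$ and extends to $W^{1,p}$ by density. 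Since $\supp\rho\subset\overline{B(0,1)}$, only $\vsn{\bz}\le 1$ contributes, so $\vsn{\bz\cdot\nabla u(\bx-th\bz)}\le \vsn{\nabla u(\bx - th\bz)}$, and we get the pointwise bound
\[
\vsn{u(\bx)-\cM_h u(\bx)} \le h \int_{\R^d}\int_0^1 \vsn{\nabla u(\bx - th\bz)}\,\d t\,\rho(\bz)\d\bz.
\]

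Next I would take the $L^p(\R^d)$ norm of both sides and push it inside the two integrals. By Minkowski's integral inequality (for the $\d t$ and $\rho(\bz)\d\bz$ averages, the latter being a probability measure),
\[
\lpn{u-\cM_h u}{p}{\R^d} \le h\int_{\R^d}\int_0^1 \lpn{\nabla u(\,\cdot\, - th\bz)}{p}{\R^d}\,\d t\,\rho(\bz)\d\bz.
\]
Translation invariance of the Lebesgue measure gives $\lpn{\nabla u(\,\cdot\, - th\bz)}{p}{\R^d} = \vn{\nabla u}_{L^p(\R^d)} = \wmpsn{u}{1}{p}{\R^d}$ for every $t$ and $\bz$, and since $\int_0^1\d t = 1$ and $\int\rho = 1$, the double integral collapses to $\wmpsn{u}{1}{p}{\R^d}$. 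Finally, using the convention $\wmpsn{u}{1}{p}{\R^d}\apprle\wmpsn{u}{1}{p}{\Omega}$ for the extension operator (stated in Section~\ref{ss:sobolev}) yields the claim, with an implied constant depending only on the dimension (in fact, $\le 1$ before the extension step).

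The only delicate point is the justification of the line-integral identity and the interchange of norm and integral at the level of $W^{1,p}$ rather than for smooth functions; this is the standard density argument (approximate $u$ in $W^{1,p}(\R^d)$ by $C^\infty$ functions, for which the identity is elementary, and pass to the limit using continuity of translation in $L^p$ and of $\cM_h$ on $L^p$). Everything else is routine. If one prefers, the whole estimate can instead be quoted as a special case of the classical mollification bounds in the references cited before the proposition, but the self-contained argument above is short enough to include directly.
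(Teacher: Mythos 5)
Your proposal is correct and follows essentially the same route as the paper's proof: both write $u-\cM_h u$ as an average of differences $u(\bx)-u(\bx-\by)$ against $\rho_h$, apply the fundamental theorem of calculus to express each difference as a line integral of $\nabla u$, exploit $\vsn{\by}\leq h$ on the support of $\rho_h$, and close the argument by density of smooth functions. The only (cosmetic) difference is in extracting the $L^p$ bound: the paper tests against an arbitrary $w\in L^{p'}(\R^d)$, applies H\"older, and chooses $w=(u-\cM_h u)^{p-1}$, whereas you use Minkowski's integral inequality together with translation invariance --- two standard, interchangeable devices.
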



%
The next proposition demonstrates how the smoothness of the mollifier depends on the mollification radius through a bound on the $L^p$-norm of a mollified function in terms of the $L^q$-norm of the original function.  
\begin{proposition}\label{pr:lpqmol}
If $1 \leq q \leq p \leq \infty$ and $Q$ is a bounded set, then for all $u\in L^q(Q)$,
\begin{equation}\label{eq:lpqmol}
\lpn{\cM_h u}{p}{Q} \apprle h^{d\frac{q-p}{pq}} \lpn{u}{q}{Q_h}
\end{equation}
where $Q_h := \bigcup_{x\in Q} B(x,h)$.  
\end{proposition}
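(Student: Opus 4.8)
The plan is to reduce the estimate to Young's convolution inequality after a localization step. Write $\cM_h u = u * \rho_h$. For $\bx \in Q$ the value $\cM_h u(\bx) = \int_{\overline{B(0,h)}} u(\bx-\by)\rho_h(\by)\d\by$ depends only on the restriction of $u$ to $B(\bx,h) \subseteq Q_h$, so if we set $v := u\,\mathbf{1}_{Q_h}$ (the extension of $u$ by zero outside $Q_h$), then $\cM_h u(\bx) = \cM_h v(\bx)$ for every $\bx \in Q$. Consequently $\lpn{\cM_h u}{p}{Q} = \lpn{\cM_h v}{p}{Q} \le \lpn{\cM_h v}{p}{\R^d}$, and since $\lpn{v}{q}{\R^d} = \lpn{u}{q}{Q_h}$ it suffices to establish the global bound $\lpn{\cM_h v}{p}{\R^d} \apprle h^{d\frac{q-p}{pq}}\lpn{v}{q}{\R^d}$.

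First I would apply Young's inequality in the form $\lpn{v * \rho_h}{p}{\R^d} \le \lpn{v}{q}{\R^d}\,\lpn{\rho_h}{r}{\R^d}$, where $r$ is defined by $\tfrac1p = \tfrac1q + \tfrac1r - 1$, equivalently $\tfrac1r = 1 - \left(\tfrac1q - \tfrac1p\right)$. The hypothesis $1 \le q \le p \le \infty$ guarantees $1 \le r \le \infty$ (the extreme cases $r=1$ and $r=\infty$ being the degeneration of Young's inequality to Hölder's inequality), so this step is legitimate. Then I would evaluate $\lpn{\rho_h}{r}{\R^d}$ via the change of variables $\by = h\bz$: this yields $\lpn{\rho_h}{r}{\R^d} = h^{-d + d/r}\lpn{\rho}{r}{\R^d} = h^{-d(1/q - 1/p)}\lpn{\rho}{r}{\R^d} = h^{d\frac{q-p}{pq}}\lpn{\rho}{r}{\R^d}$, where I have used $1 - \tfrac1r = \tfrac1q - \tfrac1p$. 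The factor $\lpn{\rho}{r}{\R^d}$ is finite because $\rho \in C^\infty_c(\R^d)$, and it depends only on $\rho$, $d$, $p$, and $q$ — in particular not on $h$ and not on any mesh. Combining the three displayed bounds gives the claim.

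There is no essential difficulty here; the points requiring care are the bookkeeping in the localization step — confirming that only the values of $u$ on $Q_h$ enter $\cM_h u|_Q$, which is exactly why $Q_h$ rather than some larger neighborhood appears on the right — and the exponent arithmetic, in particular checking $1 - \tfrac1r = \tfrac1q - \tfrac1p$ so that the power of $h$ produced by rescaling $\rho_h$ agrees with the stated exponent $d\frac{q-p}{pq} = d\left(\tfrac1p - \tfrac1q\right)$. I would also record the sign of this exponent: since $q \le p$ it is nonpositive, so the estimate trades the gain in integrability from $L^q$ to $L^p$ for a (possibly large, negative) power of the small radius $h$, degenerating to the trivial bound $\lpn{\cM_h u}{p}{Q} \apprle \lpn{u}{p}{Q_h}$ when $p = q$ (in which case $r = 1$ and $\lpn{\rho_h}{1}{\R^d} = 1$).
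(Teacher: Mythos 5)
Your proof is correct and follows essentially the same route as the paper's: truncate $u$ to $Q_h$ (the paper uses $\hat u = u\chi_{Q_h}$, you use $v = u\mathbf{1}_{Q_h}$), apply Young's convolution inequality with the exponent $r$ satisfying $\tfrac1p = \tfrac1q + \tfrac1r - 1$ (the paper's $\tfrac{pq}{pq+q-p}$), and obtain the factor $h^{d\frac{q-p}{pq}}$ by scaling $\lpn{\rho_h}{r}{\R^d}$. Your extra remarks on the admissibility of $r$ and the sign of the exponent are accurate and consistent with the paper's intended use of the estimate.
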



{\it Remark \arabic{section}.\addtocounter{remark}{1}\Alph{remark}}.\label{rm:commutemol}
Since $\frac{\p}{\p x_i}\left(\cM_h u\right) = \cM_h \left( \frac{\p u}{\p x_i}\right)$, Proposition \ref{pr:lpqmol} also applies for higher Sobolev norms; specifically, (\ref{eq:lpqmol}) implies that
\begin{align}
\wmpsn{\cM_h u}{k}{p}{Q} & \apprle h^{d\frac{q-p}{pq}} \wmpsn{u}{k}{q}{Q_h}.\label{eq:wmpqmol}
\end{align}

Finally a standard estimate of the $L^p$-norm of a function in terms of the $L^q$-norm is stated for $q$ larger than $p$.  
This can be proved with either Jensen's or H\"older's inequality.
\begin{proposition}\label{pr:lpq}
If $1 \leq p \leq q \leq \infty$ and $Q$ is a bounded set, then for all $u\in L^q(Q)$, 
\[\lpn{u}{p}{Q} \leq \vsn{Q}^{\frac{q-p}{pq}} \lpn{u}{q}{Q}.\]
\end{proposition}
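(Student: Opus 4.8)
The plan is to reduce the claim to a single application of H\"older's inequality, which is the cleanest route (the statement already advertises Jensen's inequality as an alternative, and I will indicate that path as well). First I would dispose of the degenerate exponents: the case $p = q$ makes the exponent $\frac{q-p}{pq}$ vanish and the asserted inequality becomes the trivial equality $\lpn{u}{p}{Q} = \lpn{u}{q}{Q}$, while the case $q = \infty$ is the elementary bound $\int_Q |u(\bx)|^p \d\bx \le \lpn{u}{\infty}{Q}^p\, \vsn{Q}$, whose $p$-th root is precisely the claimed estimate once $\frac{q-p}{pq}$ is read as $\frac{1}{p}$ in the limit $q \to \infty$. So it remains to treat $1 \le p < q < \infty$.

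For this main case I would write $|u(\bx)|^p = |u(\bx)|^p \cdot 1$, integrate over $Q$, and apply H\"older's inequality with the conjugate exponents $r = q/p > 1$ and $r' = q/(q-p)$:
\[
\int_Q |u(\bx)|^p \d \bx \;\le\; \left( \int_Q |u(\bx)|^{q} \d \bx \right)^{p/q} \left( \int_Q 1 \d \bx \right)^{(q-p)/q} \;=\; \lpn{u}{q}{Q}^p \, \vsn{Q}^{(q-p)/q}.
\]
Taking $p$-th roots of both sides yields $\lpn{u}{p}{Q} \le \vsn{Q}^{\frac{q-p}{pq}} \lpn{u}{q}{Q}$, which is exactly the desired inequality; since $u \in L^q(Q)$ and $\vsn{Q} < \infty$, the right-hand side is finite, so this also certifies $u \in L^p(Q)$.

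Equivalently, one may argue via Jensen's inequality on the probability space $(Q, \vsn{Q}^{-1}\d\bx)$: the map $t \mapsto t^{q/p}$ is convex because $q/p \ge 1$, so $\bigl(\vsn{Q}^{-1}\int_Q |u|^p\bigr)^{q/p} \le \vsn{Q}^{-1}\int_Q |u|^q$, and rearranging the powers of $\vsn{Q}$ reproduces the estimate. There is no real obstacle in this proof; the only steps deserving a word of care are the degenerate exponent at $p = q$ and the correct interpretation of $\frac{q-p}{pq}$ when $q = \infty$, both handled as above.
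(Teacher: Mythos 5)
Your proof is correct and follows exactly the route the paper indicates: it cites H\"older's (or Jensen's) inequality as the proof, and your application of H\"older with exponents $q/p$ and $q/(q-p)$, plus the routine handling of the cases $p=q$ and $q=\infty$, is that standard argument.
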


\subsection{Error Estimate}\label{ss:uniformw1p}

Next we turn to removing the integrability condition in Jamet's estimate (\ref{eq:jamet}).  
Again for simplicity we first consider meshes in which simplex size and quality (measured by coplanarity, not aspect ratio) are uniformly bounded and then relax these assumptions in Section~\ref{sc:nonuniform}.

\begin{theorem}\label{th:w1uniform}
Suppose $\cT$ contain no edges longer than $h$ and let $\theta$ be the maximum simplex coplanarity. Then for $k\in\{0,1\}$ and for all $u\in W^{k+1,p}(\Omega)$,
\begin{equation}\label{eq:w2p}
\wmpsn{u - \Xi_{h} u}{k}{p}{\Omega} \apprle \frac{h}{(\cos \theta)^k}\wmpsn{u}{k+1}{p}{\Omega}.
\end{equation}
\end{theorem}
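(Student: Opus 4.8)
The plan is to write $u-\Xi_h u=(u-\cM_h u)+(\cM_h u-\Pi_1\cM_h u)$ and bound the two pieces separately. The mollification term carries no geometric factor: for $k=0$ it is precisely Proposition~\ref{pr:dermol}, and for $k=1$ one applies that proposition to each $\partial_i u\in W^{1,p}(\Omega)$, using that $\cM_h$ commutes with differentiation, to get $\wmpsn{u-\cM_h u}{1}{p}{\Omega}\apprle h\,\wmpsn{u}{2}{p}{\Omega}$. Since $0<\cos\theta\le1$, this is already dominated by the right-hand side of~(\ref{eq:w2p}), so the theorem reduces to the estimate
\[
\wmpsn{\cM_h u-\Pi_1\cM_h u}{k}{p}{\Omega}\apprle\frac{h}{(\cos\theta)^k}\,\wmpsn{u}{k+1}{p}{\Omega}
\]
for the interpolation error of the now-\emph{smooth} function $\cM_h u$.

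For this term the naive move --- apply Jamet's estimate (Theorem~\ref{th:jamet}) on each simplex and sum --- fails twice: the sum over $\cT$ diverges when the mesh has unbounded ply, and Jamet's restriction $k+1-m>d/p$, which here (interpolation degree~$1$, seminorm order~$k$) reads $2-k>d/p$, excludes exactly the low-$p$ cases the theorem is meant to reach (e.g.\ $p=2$). I would instead localize over an \emph{auxiliary} uniform grid $\{Q_j\}$ of cubes of side $h$ covering $\Omega$, not over $\cT$: since every mesh simplex has $\diam K=h_K\le h$, it meets only a dimensional number of cubes and, whenever it meets $Q_j$, lies in the block $Q_j^\ast$ formed by the $3^d$ cubes around $Q_j$. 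On a fixed $Q_j$ one then (i) applies Jamet at an exponent $q\ge p$ chosen large enough that $2-k>d/q$ (take $q=p$ when that already holds) on each simplex $K$ meeting $Q_j$, uses $h_K\le h$ and $\theta_K\le\theta$, and sums over these \emph{pairwise disjoint} simplices --- all contained in $Q_j^\ast$ --- to obtain $\wmpsn{\cM_h u-\Pi_1\cM_h u}{k}{q}{Q_j}\apprle\frac{h^{2-k}}{(\cos\theta)^k}\wmpsn{\cM_h u}{2}{q}{Q_j^\ast}$; (ii) passes from $L^q$ back to $L^p$ on the cube by Proposition~\ref{pr:lpq}, losing a factor $h^{d(1/p-1/q)}$; and (iii) bounds $\wmpsn{\cM_h u}{2}{q}{Q_j^\ast}$ by $h^{-d(1/p-1/q)-(1-k)}\wmpsn{u}{k+1}{p}{(Q_j^\ast)_h}$ via Proposition~\ref{pr:lpqmol} in seminorm form together with property~(iv) of $\rho_h$ (the extra $h^{-1}$ appearing only when $k=0$, where one of the two derivatives must fall on the mollifier). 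The factors $h^{\pm d(1/p-1/q)}$ cancel and the remaining power of $h$ collapses to $(2-k)-(1-k)=1$, giving $\wmpsn{\cM_h u-\Pi_1\cM_h u}{k}{p}{Q_j}\apprle\frac{h}{(\cos\theta)^k}\wmpsn{u}{k+1}{p}{(Q_j^\ast)_h}$; raising to the $p$-th power, summing over $j$ (the fattened blocks $(Q_j^\ast)_h$ overlap with multiplicity depending only on $d$), and using the extension operator to replace $\R^d$ by $\Omega$ finishes the proof.

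The main obstacle is the localization step, in two intertwined respects. First, the per-simplex bounds cannot be added over $\cT$: doing so would replace each $Q_j^\ast$ by a fattened simplex $K_h$, and these overlap with multiplicity equal to the ply, which is unbounded; the resolution is that inside one grid cube the simplices --- however numerous and thin --- are disjoint, so their contributions accumulate \emph{additively} into $\wmpsn{\cM_h u}{2}{q}{Q_j^\ast}$ rather than multiplicatively. Second, Jamet's integrability condition forces running his estimate at a larger exponent $q$ and paying negative powers of $h$ from Proposition~\ref{pr:lpq}; these must be cancelled exactly by the gain in Proposition~\ref{pr:lpqmol}, which works only because mesh size, mollification radius, and cube size are the single scale $h$. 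Verifying that all implied constants depend only on $d$ (and the fixed $\rho$), and the minor multi-index bookkeeping in the two reductions, is routine.
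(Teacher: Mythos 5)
Your proposal is correct and follows essentially the same route as the paper: split off the mollification error via Proposition~\ref{pr:dermol}, localize the Lagrange interpolation error of $\cM_h u$ over an auxiliary cubic grid of side $h$ (rather than over $\cT$, precisely to avoid the ply), apply Jamet's estimate at an auxiliary exponent $q$ with the $h^{\pm d(1/p-1/q)}$ factors from Propositions~\ref{pr:lpq} and~\ref{pr:lpqmol} cancelling, and sum using the bounded overlap of the enlarged cubes together with the extension property. The only cosmetic difference is at $k=0$, where the paper applies Jamet with a single derivative of $\cM_h u$ on the right-hand side, while you keep the full $W^{2,q}$ semi-norm and trade one derivative for $h^{-1}$ through $\nabla\rho_h$; both variants yield the same power of $h$.
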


\begin{figure}
\begin{center}
\includegraphics[width=.28\textwidth]{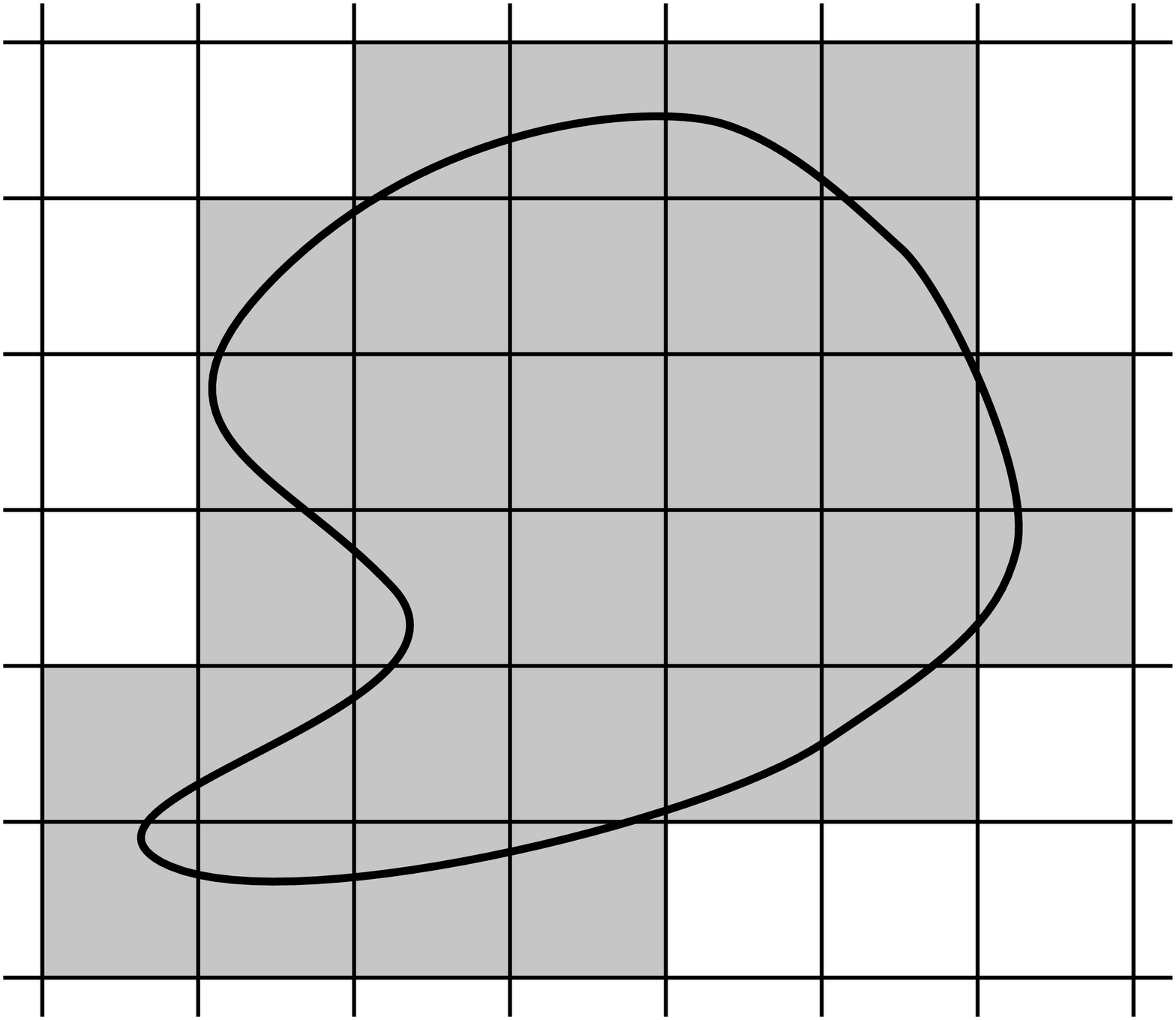}
\end{center}
\caption{For an example domain, the cubes in $\cQ$ are shaded.}\label{fg:covermesh}
\end{figure}

\begin{proof}
First the triangle inequality is applied using the intermediate function $\cM_h u$,  
\begin{equation}\label{eq:twoparts}
\wmpsn{u - \Xi_{h} u}{k}{p}{\Omega} \leq \wmpsn{u - \cM_h u}{k}{p}{\Omega} + \wmpsn{\cM_h u - \Xi_{h} u}{k}{p}{\Omega}.
\end{equation}
The first term of (\ref{eq:twoparts}) is independent of the triangulation $\cT$ and is estimated using Proposition~\ref{pr:dermol}:
\begin{equation}
\wmpsn{u - \cM_h u}{k}{p}{\Omega} \apprle h \wmpsn{u}{k+1}{p}{\Omega}.
\end{equation}
Next the second term of (\ref{eq:twoparts}) is addressed:  
\begin{equation}\label{eq:w1pkeyterm}
\wmpsn{\cM_h u - \Xi_{h} u}{k}{p}{\Omega} = \wmpsn{\cM_h u - \Pi_{1}\cM_h u}{k}{p}{\Omega}.
\end{equation} 
The estimate is established locally, but not over the individual simplices of $\cT$.  
This is important since averaging `smears' the estimate on one simplex over its neighbors.  
Since we allow for high ply meshes, it is not possible to sum these local estimates to yield the global result (\ref{eq:w2p}).  
Rather, the estimate is localized over a simple artificial (and bounded-ply) decomposition, a covering of $\Omega$ by cubes.  

In a lattice of cubes of side-length $h$, let $\cQ = \{ Q_i\}$ be the subset which intersect $\Omega$; see Figure~\ref{fg:covermesh}.  The term (\ref{eq:w1pkeyterm}) is estimated on a single cube $Q_i\in \cQ$.  
Let $R_i$ and $S_i$ denote the cubes of side-length $3h$ and $5h$, respectively, centered around $Q_i$, and let $\cK_i$ be the set of simplices in the triangulation $\cT_h$ which intersect $Q_i$;  see Figure~\ref{fg:cubes3}.  

Next a value $q > d$ is selected arbitrarily with the intention of applying Theorem~\ref{th:jamet} (since we are in the setting $k + 1 - m = 1$).  
Applying Proposition~\ref{pr:lpq} followed by the definition of $\cK_i$ yields,
\begin{align*}
\wmpsn{\cM_h u - \Pi_{1} \cM_h u}{k}{p}{Q_i}^q & \apprle |Q_i|^{\frac{q-p}{p}}\wmpsn{\cM_h u - \Pi_{1} \cM_h u}{k}{q}{Q_i}^q\\
 & \apprle |Q_i|^{\frac{q-p}{p}} \sum_{K\in \cK_i} \wmpsn{\cM_h u - \Pi_{1} \cM_h u}{k}{q}{K}^q.
\end{align*}
Theorem~\ref{th:jamet} can now be applied to each term of the summation on the right hand side and Proposition~\ref{pr:lpqmol} is used to return the estimate to the correct norm:
\begin{align*}
 \wmpsn{\cM_h u - \Pi_{1} \cM_h u}{k}{p}{Q_i}^q & \apprle |Q_i|^{\frac{q-p}{p}}\sum_{K\in \cK_i} \left(\frac{h}{\left(\cos \theta_K\right)^k}\right)^q\wmpsn{\cM_h u}{k+1}{q}{K}^q\\
 & \apprle |Q_i|^{\frac{q-p}{p}} |h|^{d\frac{p-q}{p}} \left(\frac{h}{\left(\cos \theta\right)^k}\right)^q\wmpsn{u}{k+1}{p}{S_i}^q.
\end{align*}  
Since $|Q_i| = h^d$, we conclude that
\begin{equation}\label{eq:w1plocal}
\wmpsn{\cM_h u - \Pi_{1} \cM_h u}{k}{p}{Q_i} \apprle \frac{h}{\left(\cos \theta\right)^k}\wmpsn{u}{k+1}{p}{S_i}.
\end{equation}
Finally these estimates can be summed to get the global estimate:
\begin{align}
& \wmpsn{M_h u - \Pi_{1} \cM_h u}{k}{p}{\Omega}^p  \leq \sum_i \wmpsn{\cM_h u - \Pi_{1} \cM_h u}{k}{p}{Q_i}^p \notag \\
 & \hspace{.2in} \apprle \left(\frac{h}{\left(\cos \theta\right)^k}\right)^p \sum_i \wmpsn{u}{k+1}{p}{S_i}^p \apprle \left(\frac{h}{\left(\cos \theta\right)^k}\right)^p \wmpsn{u}{k+1}{p}{\Omega}^p.\label{eq:usesextension}
\end{align}
The final inequality holds since each cube $Q_i$ belongs to at most $5^d$ cubes $S_j$.
\end{proof}

\begin{figure}
\begin{center}
\psfrag{jh}{$h$}
\psfrag{jQ}{$Q_i$}
\psfrag{jR}{$R_i$}
\psfrag{jS}{$S_i$}
\includegraphics[width=.6\textwidth]{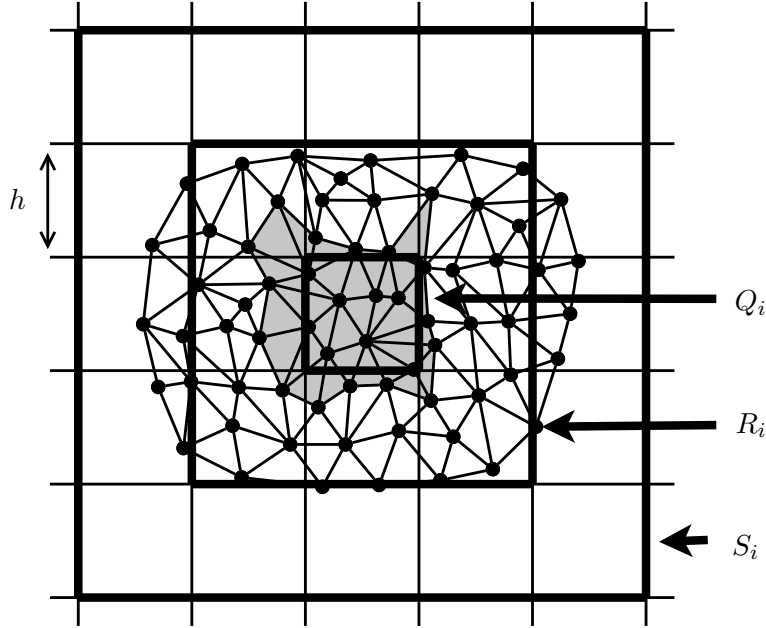}
\end{center}
\caption{Diagram for the proof of Theorem~\ref{th:w1uniform}.  Triangles belonging to $\cK_i$ are shaded. }\label{fg:cubes3}
\end{figure}

{\it Remark \arabic{section}.\addtocounter{remark}{1}\Alph{remark}}.
The essential difference between this analysis and that of Acosta~\cite{Ac01} is the use of the artificial (and bounded-ply) mesh $\cQ$ for establishing the local estimate (\ref{eq:w1plocal}).  

{\it Remark \arabic{section}.\addtocounter{remark}{1}\Alph{remark}}.
Looking closely at the inequality (\ref{eq:usesextension})  reveals that the extension domain property has been used to estimate $\wmpsn{u}{2}{p}{\bigcup S_i}$ by $\wmpsn{u}{2}{p}{\Omega}$.

{\it Remark \arabic{section}.\addtocounter{remark}{1}\Alph{remark}}. 
In the $L^p$ case of (\ref{eq:w2p}), i.e, $k=0$, the estimate does not depend on simplex shape which agrees with the classical result for more regular data.


\section{Locally Quasi-Uniform Error Estimates}\label{sc:nonuniform}
\setcounter{remark}{0}
Extending Theorem~\ref{th:w1uniform} to admit non-uniform meshes requires a precise characterization of the mesh size $h$ at each point in the domain.  
We emphasize that the size function does not necessarily reflect the size of the triangles in the mesh.
Rather it is only an upper bound.
Because the average interpolant depends on a local neighborhood, the sizing function should not vary too rapidly so that there is roughly a unique size (up to a constant factor) associated with the neighborhood.

Before proving error estimates, we outline precise restrictions on these sizing functions.  Then based on the sizing function, variable-radius mollification and compatible cubic coverings are defined.  As before, the final interpolation operator is a composition of the (modified) mollification operation $\coM_h$ with the usual Lagrange interpolation operator $\Pi_1$:
\begin{equation}
\overline{\Xi}_{h}u(\bx) := \Pi_1 \coM_h u(\bx).
\end{equation}

\subsection{Sizing Function}\label{ss:sizingfun}
Using Lagrange interpolation for smooth (enough) functions, mesh size is captured locally by the length of the longest edge of a given triangle. 
For average interpolation, function values over neighboring triangles must be considered and thus the size of the averaging region also plays a role in the error estimate. 
On bounded aspect ratio meshes this is not an issue since the longest edges of any pair of neighboring triangles are within a constant factor~\cite{Mi94}.
To analyze average interpolants for meshes without aspect ratio bounds, we decouple the mesh sizing function from the actual size of the triangles.
Rather, the sizing function will be a smooth (enough) function which provides an upper bound on the size of nearby triangles in the mesh. 
The result will be a sizing function which can be used in the error estimate despite admitting meshes with adjacent triangles with dramatically different sizes.

Letting $\Csize$ be a positive constant, we consider the class of sizing functions $h:\R^d \rightarrow (0,\infty)$ with bounded first and second derivatives:

\noindent \begin{minipage}{.49\linewidth}
\begin{equation}\label{eq:gradh}
\vsn{\nabla h(\bx)} \leq \frac{1}{16\sqrt{d}};
\end{equation}
\end{minipage}
\begin{minipage}{.49\linewidth}
\begin{equation}\label{eq:varmolch}
\vsn{\partial_i\partial_j h(\bx)} \leq \Csize.
\end{equation}
\end{minipage}

\vspace{.1in}
\noindent We call a triangulation $\cT$ of $\Omega$ \emph{compatible with} sizing function $h$ if for all $\bx\in \Omega$, $h(\bx) \geq h_{T(\bx)}$ where $T(\bx)$ denotes the triangle containing $\bx$.%
\footnote{Without causing confusion this is a slight abuse of notation using $h(\bx)$ to denote the sizing function and $h_T$ to denote the length of the longest edge of triangle. This is done to emphasize the fact that the sizing function plays the role of the mesh size in the upcoming estimates.}
In this setting, we will establish uniform interpolation estimates for any sizing function with a compatible mesh as long as the sizing function satisfies (\ref{eq:gradh}) and (\ref{eq:varmolch}) and the mesh satisfies the maximum angle condition (in the sense of coplanarity).

{\it Remark \arabic{section}.\addtocounter{remark}{1}\Alph{remark}}.
The particular choice of constant in (\ref{eq:gradh}) is not essential.  However, this value is very convenient for localizing future estimates to one layer of neighboring cubes in a 2:1 balanced cubic partition described in Subsection~\ref{ss:cubiccovering}.  
Selecting a larger constant in (\ref{eq:gradh}) will require analysis of more levels of the cubic partition.

{\it Remark \arabic{section}.\addtocounter{remark}{1}\Alph{remark}}. Restriction (\ref{eq:gradh}) is implicitly present (although the constant is often weaker) in the context of shape regular meshes. For a shape-regular mesh $\cT$ and letting $P(T)$ be the union of triangles in $\cT$ which intersect triangle $T$, we define 
\[
h(\bv_i) = \max_{T\in \cT,\bv_i \in P(T)} h_T,
\]
This definition of $h$ can be extended to $\Omega$ using linear interpolation over the triangulation and it immediately follows that the triangulation $\cT$ is compatible with $h$.
Moreover, properties of well-spaced point sets ensure that $\vsn{\nabla h}$ can be bounded depending only on the mesh shape regularity~\cite{Mi94,Ta97,HMP06}. 
In other words, a shape regular mesh can be viewed as a mesh that is compatible with a Lipschitz sizing function, and thus (\ref{eq:varmolch}) is the only \emph{extra} condition on the sizing function we require to handle less shape-regular meshes.

{\it Remark \arabic{section}.\addtocounter{remark}{1}\Alph{remark}}.\label{rm:possiblesize}
Restrictions (\ref{eq:gradh}) and (\ref{eq:varmolch}) do allow for non-uniform meshes. 
The sequence of sizing functions $h_n(\bx) = \vsn{\bx}^2 + \frac{1}{2^n}$ can be paired with a sequence of meshes with progressively smaller elements near the origin. 
However, (\ref{eq:varmolch}) does pose some additional limitations on how \emph{quickly} the sizing function can approach zero: in the shape regular case common adaptive refinement matches a sizing function $h_n(\bx) = \vsn{\bx} + \frac{1}{2^n}$ or $h_n(\bx) = \min \left\{ \vsn{\bx},\frac{1}{2^n}\right\}$ which is not smooth enough for (\ref{eq:varmolch}). See Figure~\ref{fg:sizingexample}.

\begin{figure}
\centering
\begin{tabular}{c|c}
\includegraphics[height=.23\textwidth]{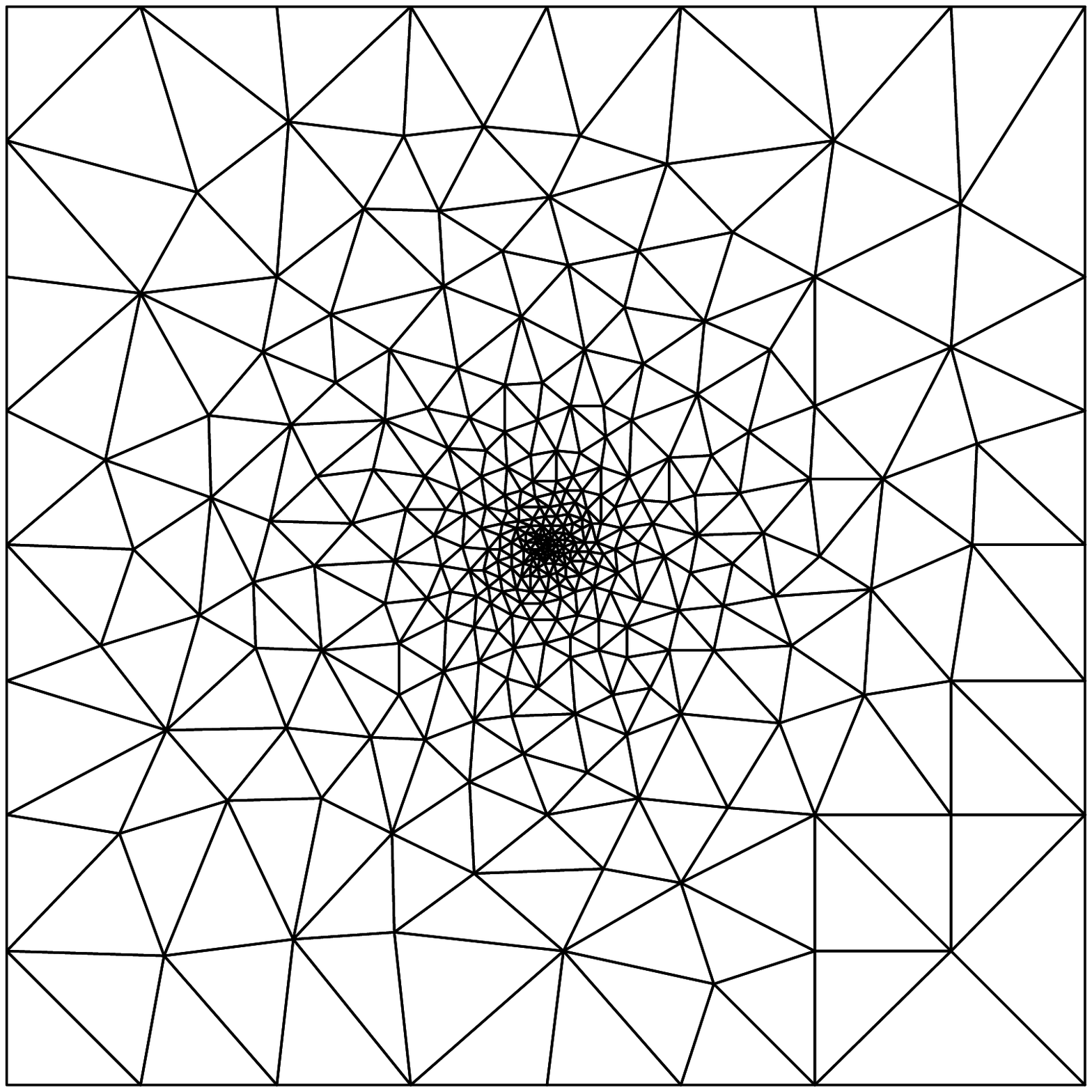}
\includegraphics[height=.22\textwidth]{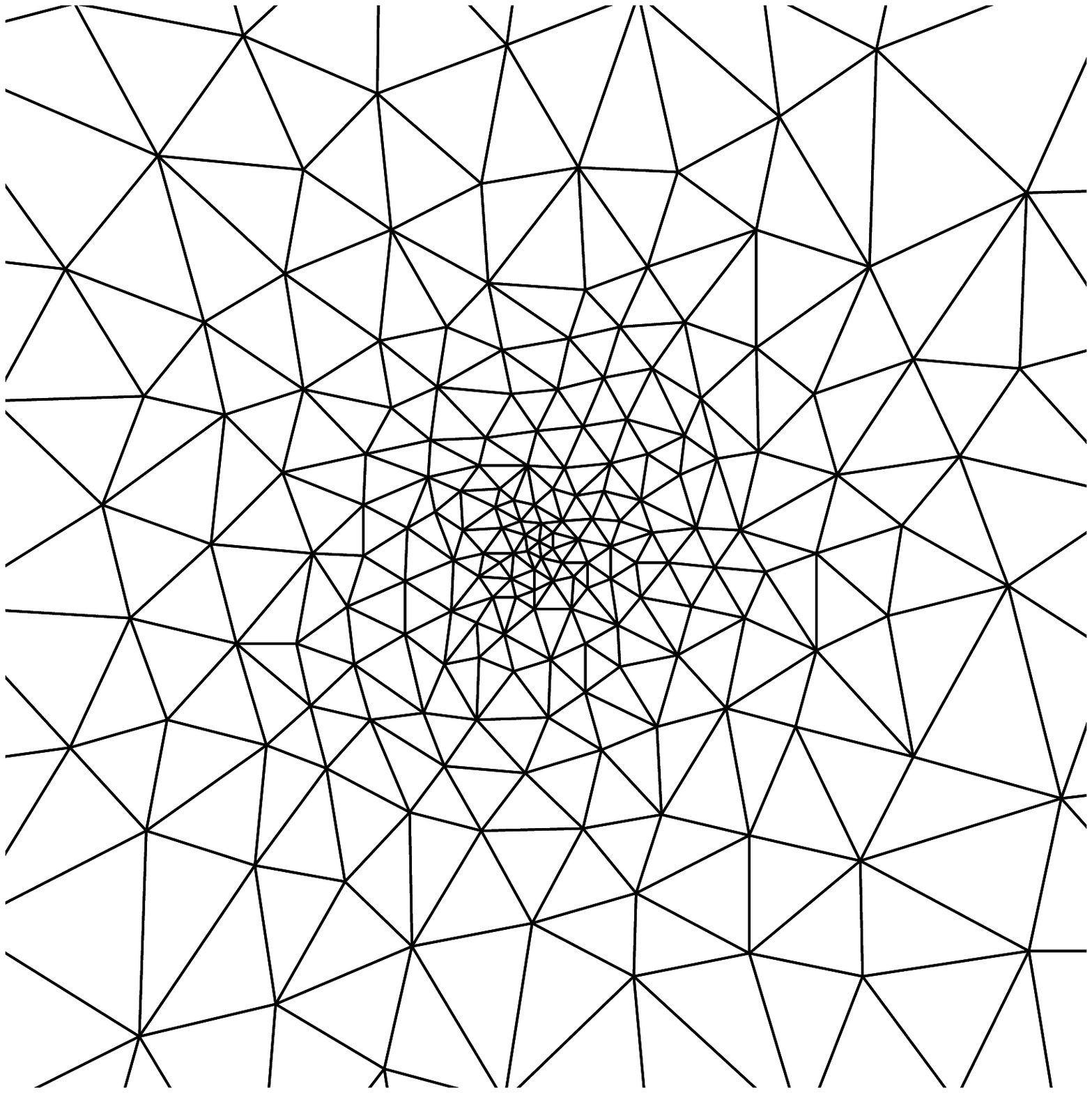} &
\includegraphics[height=.23\textwidth]{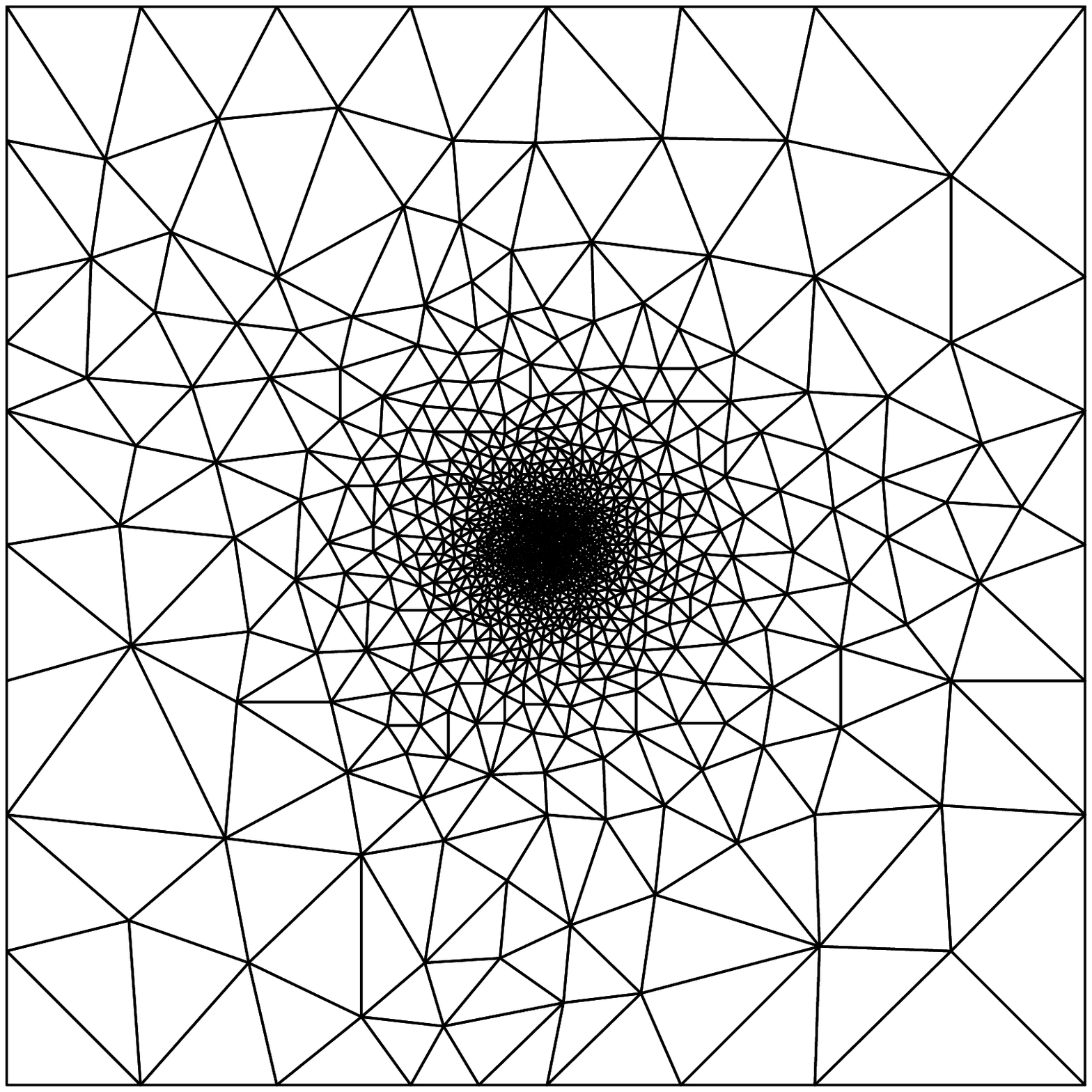}
\includegraphics[height=.22\textwidth]{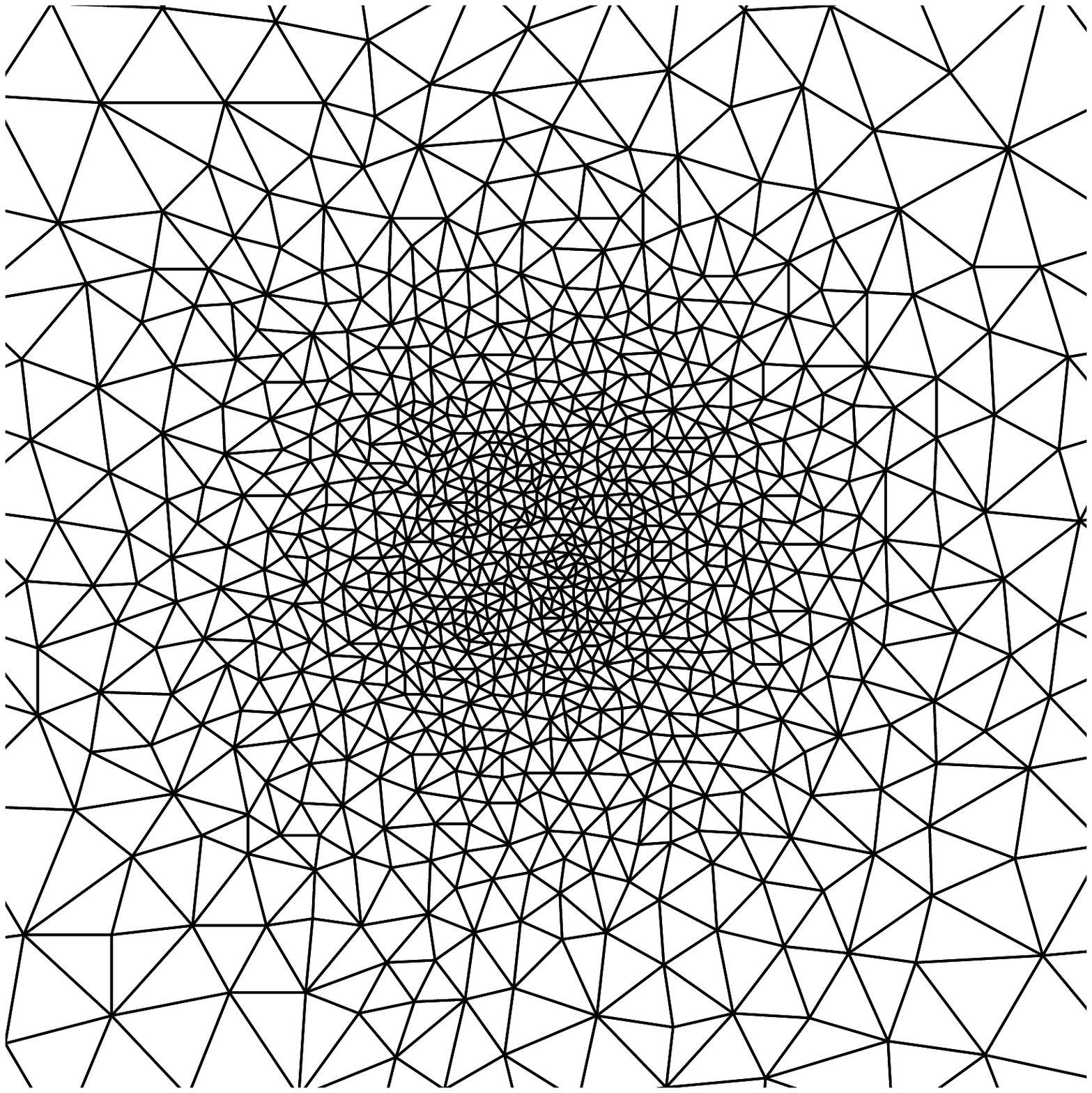}
\end{tabular}
\caption{Comparison of two (bounded aspect ratio) meshes associated with sizing functions $h(\bx) = \frac{1}{2}\vsn{\bx} + 0.01$ (left) and $h(\bx) = \vsn{\bx}^2 + 0.01$ (right) over the square $[-1,1]^2$. In each case, both the full mesh and the center region zoomed four times are shown. These meshes demonstrate the differences between conditions (\ref{eq:gradh}) and (\ref{eq:varmolch}): meshes only satisfying the former condition can grade more quickly away from zero. Note: the meshes do not satisfy (\ref{eq:gradh}) with the particular choice of constant, but this exaggerated example is given to more clearly display the effect.}\label{fg:sizingexample}
\end{figure}

{\it Remark \arabic{section}.\addtocounter{remark}{1}\Alph{remark}}.
Restrictions (\ref{eq:gradh}) and (\ref{eq:varmolch}) are somewhat weaker than those used in~\cite{Ac01} where there are restrictions essentially of the form $h(\bx) \leq \nabla h(\bx)$. 
In that setting Gronwall's inequality prevents such sizing functions from being truly non-uniform.
Using the techniques of this paper, it should be possible to recover Acosta's anisotropic results in a similar relaxed setting.

\subsection{Variable-Radius Mollification}\label{ss:variablemol}
We now consider the generalization of mollification by allowing the mollification radius to vary over the domain.  Given $u\in L^1(\R^d)$ and sizing function $h$, define 
\[
\coM_h u(\bx) := \int_{\R^d} u(\bx-\by) \rho_{h(\bx)}(\by) \d \by = \int_{\R^d} u(\bx+h(\bx)\by) \rho(\by) \d \by.
\]
The only difference between this definition and the standard mollifier is that the radius $h(\bx)$ is spatially varying. 
We consider an isotropic mollification region (and thus produce isotropic error estimates). 
The prior construction of Acosta uses an anisotropic mollifier, i.e., the term $\left(h_1(\bx) y_1,h_2(\bx)y_2,h_3(\bx)y_3\right)$ replaces $h(\bx)\by$ in the above definition~\cite{Ac01}. 
First we note a simple property of the variable radius mollifier.

\begin{proposition}\label{pr:varmolprop}
Let $p \geq 1$ and $Q\subset\R^d$, and define $Q_h:=\bigcup_{\bx \in Q} B(\bx,h(\bx))$. Then for all the sizing functions satisfying (\ref{eq:gradh}) and for all $u\in L^p(Q_h)$,
\begin{equation}\label{eq:varmolprop}
\int_Q |u(\bx+h(\bx)\by)|^p \d\bx \apprle \lpn{u}{p}{Q_h}^p.
\end{equation}
\end{proposition}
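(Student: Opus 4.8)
The plan is to obtain (\ref{eq:varmolprop}) by a change of variables in the integral on the left. Only vectors $\by$ with $|\by|\le 1$ are relevant here (these are the ones weighted by $\rho$ in $\coM_h$, and they are exactly the ones for which $\bx+h(\bx)\by$ stays in $\overline{B(\bx,h(\bx))}$), so I will treat $\by$ as a fixed parameter with $|\by|\le 1$ and keep track that the implied constant is uniform in $\by$. Set $\Phi(\bx):=\bx+h(\bx)\by$. It suffices to establish three things: $\Phi\colon\R^d\to\R^d$ is injective; its Jacobian determinant is bounded above and below by positive absolute constants; and $\Phi(Q)\subseteq Q_h$ up to a Lebesgue-null set. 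Granting these, the change-of-variables formula turns $\int_Q|u(\bx+h(\bx)\by)|^p\,\d\bx$ into an integral over $\Phi(Q)$ against the reciprocal Jacobian, which is then dominated by a constant times $\lpn{u}{p}{Q_h}^p$.

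For injectivity, (\ref{eq:gradh}) makes $h$ Lipschitz with constant at most $\tfrac{1}{16\sqrt d}$, so
\[
|\Phi(\bx)-\Phi(\bx')|\ \ge\ |\bx-\bx'|-|h(\bx)-h(\bx')|\,|\by|\ \ge\ \bigl(1-\tfrac{1}{16\sqrt d}\bigr)\,|\bx-\bx'| ,
\]
i.e.\ $\Phi$ is bi-Lipschitz. Its (a.e.-defined) differential $D\Phi(\bx)=I+\by\,(\nabla h(\bx))^{\!\top}$ is a rank-one perturbation of the identity, so the matrix determinant lemma gives $\det D\Phi(\bx)=1+\by\cdot\nabla h(\bx)$, and $|\by\cdot\nabla h(\bx)|\le|\by|\,|\nabla h(\bx)|\le\tfrac{1}{16\sqrt d}\le\tfrac{1}{16}$ forces $\tfrac{15}{16}\le\det D\Phi(\bx)\le\tfrac{17}{16}$ almost everywhere. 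The change-of-variables formula for bi-Lipschitz maps (injectivity is used so $\Phi^{-1}$ is defined on $\Phi(Q)$) then yields
\[
\int_Q |u(\bx+h(\bx)\by)|^p\,\d\bx\ =\ \int_{\Phi(Q)}\frac{|u(\bz)|^p}{\det D\Phi(\Phi^{-1}(\bz))}\,\d\bz\ \le\ \tfrac{16}{15}\int_{\Phi(Q)}|u(\bz)|^p\,\d\bz .
\]
Finally, if $\bx$ lies in the interior of $Q$ then, choosing $\bx':=\bx+\varepsilon\by\in Q$ for sufficiently small $\varepsilon>0$, one has $|\Phi(\bx)-\bx'|\le h(\bx)-\varepsilon<h(\bx)-\tfrac{1}{16\sqrt d}\varepsilon\le h(\bx')$, so $\Phi(\bx)\in B(\bx',h(\bx'))\subseteq Q_h$; hence $\Phi$ maps $Q$ into $Q_h$ except for the image of the boundary $\partial Q$, which is null (for the cubes and unions of cubes that occur in this paper, together with the fact that $\Phi$ is bi-Lipschitz). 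Combining the last display with $\int_{\Phi(Q)}|u|^p\le\lpn{u}{p}{Q_h}^p$ proves (\ref{eq:varmolprop}) with implied constant $\tfrac{16}{15}$.

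I do not expect a genuine obstacle: the entire argument is the bookkeeping needed to legitimately change variables, and both of its ingredients — global injectivity of $\Phi$ and non-degeneracy of $D\Phi$ — are forced by the smallness of $|\nabla h|$ in (\ref{eq:gradh}), which also guarantees $\Phi(Q)\subseteq Q_h$ modulo a null set. The precise constant $\tfrac{1}{16\sqrt d}$ is far more than this proposition needs (any Lipschitz bound below $1$ would suffice); it is chosen that small for the cube-covering estimates of the later subsections.
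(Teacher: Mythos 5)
Your argument is correct and is essentially the paper's own proof: both rest on the change of variables $\bz=\bx+h(\bx)\by$ for fixed $\vsn{\by}\leq 1$, with the Jacobian $1+\nabla h(\bx)\cdot\by$ pinned near $1$ by (\ref{eq:gradh}). You simply supply the bookkeeping (bi-Lipschitz injectivity, the determinant computation, and $\Phi(Q)\subset Q_h$ up to a null set) that the paper leaves implicit, so no further changes are needed.
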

\begin{proof}
This results from a simple change of variables, $\bz=\bx+h(\bx)\by$. The Jacobian of this transformation can be bounded by $\frac{1}{(1+\nabla h(\bx)\cdot \by)d}$. By assumption (\ref{eq:gradh}) and since $\by \in B(\bzero,1)$, $1+\nabla h(\bx)\cdot\by \in [1-1/(16\sqrt{d}),1+1/(16\sqrt{d})]$ producing (\ref{eq:varmolprop}).
\end{proof}

Extensions of Propositions \ref{pr:dermol} and \ref{pr:lpqmol} are necessary to prove the non-uniform error estimates.  Since Remark~2.B 
no longer applies, i.e., 
\[
\frac{\p }{\p x_i} \left(\coM_h u\right)\neq \coM_h\left( \frac{\p}{\p x_i} u \right),
\]
we prove these lemmas directly in the correct Sobolev norms. 

\begin{lemma}\label{lm:vardermol}
Let $p \geq 1$ and $Q$ be a cube.  Then for $k\in\{0,1\}$ and for all $u\in W^{k,p}(\Omega)$,
\begin{equation}\label{eq:lmvdm1}
\wmpsn{u - \coM_h u}{k}{p}{Q} \apprle H_Q \wmpn{u}{k+1}{p}{Q_h}
\end{equation}
where $H_Q = \max_{\bx \in Q} h(\bx)$, and $Q_h = \bigcup_{\bx \in Q} B(\bx,h(\bx))$.
\end{lemma}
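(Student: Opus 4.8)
The plan is to mimic the proof of Proposition~\ref{pr:dermol} but keep careful track of the spatially varying radius $h(\bx)$. First I would reduce to a pointwise integral representation. For the case $k=0$, write
\[
u(\bx) - \coM_h u(\bx) = \int_{\R^d} \bigl(u(\bx) - u(\bx + h(\bx)\by)\bigr)\rho(\by)\d\by,
\]
using $\int \rho = 1$. For $u$ smooth (a density argument handles the general case) apply the fundamental theorem of calculus along the segment from $\bx$ to $\bx + h(\bx)\by$, giving
\[
u(\bx) - u(\bx + h(\bx)\by) = -\int_0^1 h(\bx)\,\by\cdot\nabla u\bigl(\bx + t\,h(\bx)\by\bigr)\d t.
\]
Then I would take absolute values, bound $|\by|\le 1$ on $\supp\rho$, pull out the factor $h(\bx)\le H_Q$, apply Minkowski's integral inequality (or simply H\"older in $\by$ and $t$ against the probability measure $\rho(\by)\d\by\,\d t$) to move the $L^p(Q)$ norm inside, and arrive at an estimate of the form
\[
\wmpsn{u - \coM_h u}{0}{p}{Q} \apprle H_Q \Bigl(\int_0^1\!\!\int_{\R^d} \int_Q |\nabla u(\bx + t\,h(\bx)\by)|^p \d\bx\,\rho(\by)\d\by\,\d t\Bigr)^{1/p}.
\]
The inner $\bx$-integral is exactly the object controlled by Proposition~\ref{pr:varmolprop}, provided the change of variables $\bz = \bx + t\,h(\bx)\by$ is nondegenerate uniformly in $t\in[0,1]$ and $|\by|\le 1$. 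Since $t\,h(\bx)\by$ ranges over vectors no larger than $h(\bx)$, the image of $Q$ stays inside $Q_h$, and the Jacobian bound in the proof of Proposition~\ref{pr:varmolprop} applies verbatim (the factor $t$ only makes $|\nabla(t h)|$ smaller). This yields $\wmpsn{u - \coM_h u}{0}{p}{Q} \apprle H_Q \wmpsn{u}{1}{p}{Q_h}$, which is (\ref{eq:lmvdm1}) for $k=0$ (indeed with the seminorm, and a fortiori the full norm on the right).

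For $k=1$ I need to estimate $\wmpsn{u - \coM_h u}{1}{p}{Q}$, i.e.\ control $\frac{\p}{\p x_i}(u - \coM_h u)$. Here the warning in the text is the crux: differentiating $\coM_h u$ in $x_i$ does \emph{not} simply pass the derivative onto $u$, because the radius $h(\bx)$ depends on $\bx$. Using the second representation $\coM_h u(\bx) = \int u(\bx + h(\bx)\by)\rho(\by)\d\by$ and differentiating under the integral,
\[
\frac{\p}{\p x_i}\coM_h u(\bx) = \int_{\R^d} \Bigl(\delta_{i}^{\ell} + \p_i h(\bx)\, y_\ell\Bigr)\,\p_\ell u(\bx + h(\bx)\by)\,\rho(\by)\d\by,
\]
(summation over $\ell$). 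The term with $\delta_i^\ell$ is $\coM_h(\p_i u)(\bx)$ up to the radius-dependence issue reappearing one level down, so after subtracting $\p_i u$ it is handled by the $k=0$ argument applied to $\p_i u$, contributing $H_Q\wmpsn{\p_i u}{1}{p}{Q_h}\le H_Q\wmpsn{u}{2}{p}{Q_h}$. Wait — more carefully: $\int \delta_i^\ell \p_\ell u(\bx+h(\bx)\by)\rho(\by)\d\by = \coM_h(\p_i u)(\bx)$, and $\p_i u(\bx) - \coM_h(\p_i u)(\bx)$ is bounded by the $k=0$ case, giving the $H_Q\,|u|_{W^{2,p}(Q_h)}$ piece (here the second-derivative seminorm of $u$ appears, consistent with $\wmpn{u}{k+1}{p}{Q_h}$ for $k=1$). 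The genuinely new term is $\int \p_i h(\bx)\,y_\ell\,\p_\ell u(\bx+h(\bx)\by)\rho(\by)\d\by$; here I use $|\p_i h(\bx)|\le |\nabla h(\bx)| \le \frac{1}{16\sqrt d}$ from (\ref{eq:gradh}), so this term is $\apprle \int |\nabla u(\bx+h(\bx)\by)|\rho(\by)\d\by$, and taking $L^p(Q)$ norms and applying Proposition~\ref{pr:varmolprop} bounds it by $\wmpsn{u}{1}{p}{Q_h} \le \frac{1}{H_Q}\cdot H_Q\,\wmpsn{u}{1}{p}{Q_h}$. Since $H_Q$ should appear as a prefactor, I need to absorb this: the cleanest route is to observe that on a cube $Q$ compatible with the analysis, $H_Q$ need not be small, so instead I keep the $\nabla u$ term as $\wmpsn{u}{1}{p}{Q_h}$, which is already present in $\wmpn{u}{k+1}{p}{Q_h} = \wmpn{u}{2}{p}{Q_h}$ (the full $W^{2,p}$ norm includes the $W^{1,p}$ and $L^p$ seminorms) — but it lacks the $H_Q$ factor.

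This last point is the main obstacle I anticipate, and I expect it is resolved by one of two standard devices. Either (a) the statement is applied only for $H_Q$ bounded below away from $0$ is \emph{false} — rather, one exploits that $\nabla h$ small forces, via (\ref{eq:gradh}) again, a comparison making the $\p_i h$ term genuinely lower order after all; or (b), more likely given the phrasing with $\wmpn{u}{k+1}{p}{Q_h}$ (full norm, not seminorm) on the right, the intended reading allows the prefactor $H_Q$ to be replaced implicitly by $\max(H_Q,1)$ or the cube $Q$ has $\size(Q)\apprle H_Q$ so that a Poincar\'e-type step converts $\wmpsn{u}{1}{p}{Q_h}$ into $H_Q^{-1}$ times a difference, which is circular, so actually the honest fix is: the $\p_i h$ term is multiplied by $y_\ell$ and then by $\nabla u$ evaluated at the \emph{mollified} point; subtracting and adding $\int \p_i h(\bx) y_\ell \p_\ell u(\bx)\rho(\by)\d\by = 0$ (since $\rho$ is radially symmetric, $\int y_\ell \rho(\by)\d\by = 0$ — this is precisely where the radial symmetry assumption flagged in the text is used!), I get instead $\int \p_i h(\bx) y_\ell\bigl(\p_\ell u(\bx+h(\bx)\by) - \p_\ell u(\bx)\bigr)\rho(\by)\d\by$, and now the difference of gradients is $O(h(\bx)|\by|)$ times a second derivative by the fundamental theorem of calculus, recovering the $H_Q$ prefactor against $\wmpsn{u}{2}{p}{Q_h}$. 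So the key steps are: (1) integral representation and FTC for $k=0$; (2) Minkowski + Proposition~\ref{pr:varmolprop} for $k=0$; (3) for $k=1$, differentiate the second representation, split into the $\coM_h(\p_i u)$ piece (use $k=0$) and the $\nabla h$ piece; (4) in the $\nabla h$ piece, insert $0 = \int y_\ell\rho\,\d\by$ using radial symmetry, then FTC again to extract a second factor of $h$; (5) collect. I expect step (4), and correctly invoking radial symmetry, to be the delicate point; everything else is routine once Proposition~\ref{pr:varmolprop} is in hand.
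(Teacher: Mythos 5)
Your proposal is correct and follows essentially the same route as the paper's proof: the $k=0$ case is the same FTC-plus-change-of-variables argument (your Minkowski step versus the paper's duality with $w=(u-\coM_h u)^{p-1}$ is cosmetic), and for $k=1$ you use the identical decomposition $\frac{\p}{\p x_i}\coM_h u = \coM_h(\p_i u) + g$ with the extra factor of $h(\bx)$ in $g$ recovered from the symmetry of $\rho$. Your device of inserting $0=\int y_\ell\rho(\by)\d\by$ and the paper's antisymmetrization of the integral over halfspaces, writing $\nabla u(\bx+h(\bx)\by)-\nabla u(\bx-h(\bx)\by)$, are the same use of $\rho(\by)=\rho(-\by)$, so your step (4) is exactly the delicate point the paper resolves.
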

\begin{proof}
The $k=0$ case of inequality (\ref{eq:lmvdm1}) follows from a very similar argument to the proof of Proposition~\ref{pr:dermol}.
Letting $w\in L^{p'}(\R^d)$ be an arbitrary function,
\begin{align*}
\int_Q |(u(\bx) - & \coM_h u(\bx) ) w(\bx) | \d \bx \\
 & = \int_Q \left| \left(\int_{B({\bf 0},1)}  \left( \int_0^1 \frac{\d}{\d s} (u(\bx - sh(\bx) \by)) \d s \right) \rho(\by) \d \by\right)  w(\bx) \right|\d \bx\\
 & \leq  \int_0^1 \int_{\R^d} \int_{Q}  \left|h(\bx) \by \cdot \nabla u(\bx - sh(\bx) \by) \rho(\by)   w(\bx) \right| \d \bx \d \by \d s.
\end{align*}
Next we note that $|h(\bx) \by| \leq H_Q$ and then bound the inner integral independent of $\by$ by expanding the set $Q$:
\begin{align*}
\int_Q |(u(\bx) - & \coM_h u(\bx) ) w(\bx) | \d \bx \\
 & \leq H_Q \int_0^1 \int_{\R^d} \rho(\by) \int_Q \vsn{\nabla u(\bx - sh(\bx)\by) w(\bx)} \d\bx \d\by \d s \\
 & \apprle H_Q \wmpsn{u}{1}{p}{Q_h} \lpn{w}{p'}{Q}.
\end{align*}
Note: the construction $\int \rho(\by)\d\by = 1$ was used to eliminate the integral in the $\by$ variable. 
Selecting $w=\left(u(\bx) - \coM_h u(\bx)\right)^{p-1}$ completes the result.  

Next, the $k=1$ case of inequality (\ref{eq:lmvdm1}) is addressed.   We begin by computing $\frac{\partial}{\partial x_i} \coM_h u(\bx)$ directly:
\begin{equation}\label{eq:dercomhu}
\frac{\partial}{\partial x_i} \coM_h u(\bx)  = \coM_h \partial_i u(\bx) 
+ \int_{\R^d}\rho(\by)\partial_i h(\bx) \nabla u(\bx+h(\bx)\by)\cdot \by \d\by.
\end{equation}
Then letting $g(\bx) := \int_{\R^d}\rho(\by)\partial_i h(\bx) \nabla u(\bx+h(\bx)\by)\cdot \by \d\by$,
\begin{equation}\label{eq:lmvdm}
\lpn{\frac{\partial}{\partial x_i}\left(u - \coM_h u\right)}{p}{Q} \leq \lpn{\partial_i u - \coM_h (\partial_i u)}{p}{Q} + \lpn{g}{p}{Q}.
\end{equation}

The first term can be estimated using the argument applied to (\ref{eq:lmvdm1}) yielding,
\begin{equation}\label{eq:lmvdmpart1}
\lpn{\partial_i u - \coM_h (\partial_i u)}{p}{Q} \leq H_Q \wmpsn{\partial_i u}{1}{p}{R}.
\end{equation}
It only remains to estimate $\lpn{g}{p}{Q}$. Let $\R^d_+$ denote the halfspace with positive first coordinate, $x_i > 0$. First $g$ is rewritten in terms of second derivatives of $u$:
\begin{align*}
g(\bx)  & = \partial_i h(\bx) \int_{\R^d}\rho(\by) \nabla u(\bx+h(\bx)\by)\cdot \by \d\by\\
 &= \partial_i h(\bx) \int_{\R^d_+}\rho(\by) \left(\nabla u(\bx+h(\bx)\by)-\nabla u(\bx-h(\bx)\by)\right)\cdot \by \d\by\\
 & = \partial_i h(\bx) \int_{\R^d_+}\rho(\by) \left(\int_{-1}^1\frac{\partial}{\partial s}\nabla u(\bx+sh(\bx)\by)\d s\right)\cdot \by \d\by.
\end{align*}
Our selection of a radially symmetric mollifier is important above as the property $\rho(\by) = \rho(-\by)$ has been applied. 
\begin{align*}
\int \vsn{ g(\bx) w(\bx)}\d\bx 
& \leq \int_Q \int_{\R^+} \int_{-1}^1  \vsn{\partial_i h(\bx)} \rho(\by) \vsn{ \frac{\partial}{\partial s}\nabla u(\bx+sh(\bx)\by)}\vsn{\by} \vsn{w(\bx)} \d s \d\by \d\bx\\
 & \apprle \int_{-1}^1 \vsn{s} \int_{\R^+} \rho(\by) \int_Q    \vsn{h(\bx)} \vsn{D^2u(\bx+sh(\bx)\by)}  \vsn{w(\bx)} \d \bx \d\by \d s,
\end{align*}
where $D^2u$ denotes the Hessian of $u$, the matrix of second derivatives. 
Next, H\"older's inequality can be applied:
\begin{align}\label{lm:lmvdmpart2}
\int \vsn{g(\bx)w(\bx) }\d\bx 
 & \apprle H_Q \wmpsn{u}{2}{p}{Q_h}\lpn{w}{p'}{Q}.
\end{align}
Finally setting $w(\bx)=g(\bx)^{p-1}$, and then combining (\ref{lm:lmvdmpart2}) with (\ref{eq:lmvdmpart1}) completes the result.
\end{proof}


The next step is to develop an extension of Proposition~\ref{pr:lpqmol} for the variable-radius mollifier.

\begin{lemma}\label{lm:varmol}
If $1 \leq q \leq p \leq \infty$ and $Q$ is a bounded set, then for all the sizing functions satisfying (\ref{eq:gradh}) and (\ref{eq:varmolch}) and for all $u\in W^{2,q}(Q)$, $k\in\{0,1,2\}$,
\begin{equation}\label{eq:varmol}
\wmpsn{\coM_h u}{k}{p}{Q} \apprle h_Q^{-d} \vsn{Q_h}^{\frac{q-1}{q}}\vsn{Q}^{\frac{1}{p}} \wmpn{u}{k}{q}{Q_h}
\end{equation}
where $h_Q = \min_{\bx \in Q} h(\bx)$ and $Q_h := \bigcup_{\bx\in Q} B(\bx,h(\bx))$.  
\end{lemma}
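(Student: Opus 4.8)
\emph{Proof plan.} All three cases $k\in\{0,1,2\}$ will be reduced to a single pointwise bound on $Q$ followed by the elementary estimate $\lpn{g}{p}{Q}\le\vsn{Q}^{1/p}\vn{g}_{L^\infty(Q)}$, which is the $q=\infty$ instance of Proposition~\ref{pr:lpq} and accounts for the factor $\vsn{Q}^{1/p}$ in (\ref{eq:varmol}). I would begin with $k=0$, a variable-radius version of Proposition~\ref{pr:lpqmol}. Writing $\coM_h u(\bx)=\int_{B(\bzero,1)}u(\bx+h(\bx)\by)\,\rho(\by)\d\by$, for each fixed $\bx$ the substitution $\bz=\bx+h(\bx)\by$ is a pure dilation and translation with Jacobian $h(\bx)^{d}$ — in contrast with Proposition~\ref{pr:varmolprop}, no control on $\nabla h$ enters here — so $\vsn{\coM_h u(\bx)}\le h(\bx)^{-d}\vn{\rho}_{L^\infty}\int_{B(\bx,h(\bx))}\vsn{u(\bz)}\d\bz$. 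Applying H\"older against the constant $1$ and enlarging $B(\bx,h(\bx))\subseteq Q_h$ bounds the last integral by $\vsn{Q_h}^{(q-1)/q}\lpn{u}{q}{Q_h}$, and since $h(\bx)\ge h_Q$ this gives $\vsn{\coM_h u(\bx)}\apprle h_Q^{-d}\vsn{Q_h}^{(q-1)/q}\lpn{u}{q}{Q_h}$ for every $\bx\in Q$; taking $L^p(Q)$ norms finishes $k=0$.

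For $k\in\{1,2\}$ the only new ingredient is a usable formula for $\partial^\beta\coM_h u$ with $\vsn{\beta}=k$. Iterating the computation behind (\ref{eq:dercomhu}) — differentiating $\bx\mapsto u(\bx+h(\bx)\by)$ under the integral by the chain rule — produces $\partial^\beta\coM_h u(\bx)=\sum_{\vsn{\gamma}\le k}\int_{B(\bzero,1)}(\partial^\gamma u)(\bx+h(\bx)\by)\,P_{\beta\gamma}(\bx,\by)\,\rho(\by)\d\by$, where each $P_{\beta\gamma}$ is a polynomial in $\by$ of degree at most $k$ whose coefficients are products (possibly empty) of partial derivatives of $h$ of order $1$ or $2$. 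Since $k\le 2$, each such derivative of $h$ is bounded — by (\ref{eq:gradh}) in the first-order case and by (\ref{eq:varmolch}) in the second-order case — and $\vsn{\by}\le 1$ on $\supp\rho$, so $\vsn{P_{\beta\gamma}(\bx,\by)}\apprle 1$ on $\supp\rho$ with an implied constant depending only on $d$ and $\Csize$. Hence $\vsn{\partial^\beta\coM_h u(\bx)}\apprle\sum_{\vsn{\gamma}\le k}\int_{B(\bzero,1)}\vsn{(\partial^\gamma u)(\bx+h(\bx)\by)}\,\rho(\by)\d\by$, and running the $k=0$ estimate on each summand gives $\vsn{\partial^\beta\coM_h u(\bx)}\apprle h_Q^{-d}\vsn{Q_h}^{(q-1)/q}\wmpn{u}{k}{q}{Q_h}$ pointwise on $Q$. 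Taking $L^p(Q)$ norms and summing over the finitely many $\beta$ with $\vsn{\beta}=k$ yields (\ref{eq:varmol}).

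The main obstacle is the derivative bookkeeping in the previous step, whose only real content is that no partial derivative of $h$ of order larger than two ever appears — this is precisely why the crude hypothesis (\ref{eq:varmolch}) on the sizing function suffices and nothing stronger is needed — while the extra polynomial weights $P_{\beta\gamma}$ cannot degrade the power of $h_Q$ because $\by$ ranges over the unit ball on which $\rho$ is supported. (Unlike in Lemma~\ref{lm:vardermol}, radial symmetry of $\rho$ plays no role here.) Finally, although the hypothesis is stated as $u\in W^{2,q}(Q)$, the right-hand side of (\ref{eq:varmol}) involves $Q_h\supseteq Q$, so one works throughout with the extension of $u$ to $\R^d$ from Section~\ref{ss:sobolev}.
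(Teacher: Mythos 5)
Your proposal is correct and follows essentially the same route as the paper's proof: a pointwise bound on $Q$ obtained by the fixed-$\bx$ change of variables $\bz=\bx+h(\bx)\by$ plus H\"older (giving $h_Q^{-d}\vsn{Q_h}^{(q-1)/q}$), then integration over $Q$ for the $\vsn{Q}^{1/p}$ factor, with the $k\in\{1,2\}$ cases reduced to $k=0$ by differentiating under the integral and bounding the $\nabla h$ and $\partial_i\partial_j h$ coefficients via (\ref{eq:gradh}) and (\ref{eq:varmolch}); your multi-index bookkeeping is just a compact restatement of the paper's explicit five-term formula (\ref{eq:2dercomhu}), and your observations that only the second-order-$h$ term forces the full norm and that (\ref{eq:varmolch}) is needed only for $k=2$ match the paper's Remark~3.E.
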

\begin{proof}
We begin with the case $k=0$. For all $x\in Q$,
\begin{align*}
\vsn{\coM_h u(\bx)} & \leq \lpn{\rho}{\infty}{\R^d} \int_{B(\bzero,1)} \vsn{u(\bx+h(\bx)\by)} \d\by\\ 
 & \leq \lpn{\rho}{\infty}{\R^d} \int_{Q_h} h(\bx)^{-d} \vsn{u(\bz)} \d\bz.
\end{align*}
The previous inequality results from a change of variables $\bz=\bx+h(\bx)\by$. (Note, $\bx$ is fixed in this transformation.) H\"older's inequality gives,
\begin{align}\label{eq:varmolk0}
\vsn{\coM_h u(\bx)} 
 & = \lpn{\rho}{\infty}{\R^d} h_Q^{-d} |Q_h|^{1/q'} \lpn{u}{q}{Q_h}.
\end{align}
Integrating  (\ref{eq:varmolk0}) in $\bx$ over $Q$ produces (\ref{eq:varmol}) in the $k=0$ case.

Next, we turn to the case $k=2$; the $k=1$ case follows from very similar arguments.
\begin{align}
\frac{\partial^2}{\partial x_i\partial x_j}\coM_h u(\bx)  = \int_{\R^d} \rho(\by) \biggl[ \biggr.
& \partial_i\partial_j u(\bx+h(\bx)\by)\,\, + \biggr. \notag \\
 & \partial_ih(\bx)\nabla\left(\partial_j u(\bx+h(\bx)\by)\right)\cdot \by \,\, + \notag\\
 & \partial_j h(\bx)\nabla\left(\partial_i u(\bx+h(\bx)\by)\right)\cdot \by \,\, + \label{eq:2dercomhu}\\
 & \partial_i h(\bx)\partial_j h(\bx)  \by \cdot D^2 u\left(\bx+h(\bx)\by)\right) \by \,\, + \notag\\
 & \partial_i\partial_j h(\bx)\nabla\left(u(\bx+h(\bx)\by)\right)\cdot \by  \biggl. \biggr] \d \by \notag
\end{align}
The first term is identical to the $k=0$ case using the function $\partial_i\partial_j u$ in the place of $u$. In the second, third, and fourth terms, the same argument holds after we recall (\ref{eq:gradh}) and the fact that $\vsn{\by} \leq 1$. 
While the argument is the same, the final term yields a few slight differences: (\ref{eq:varmolch}) is required and the resulting estimate is in terms of $\wmpsn{u}{1}{p}{Q_h}$ rather than $\wmpsn{u}{2}{p}{Q_h}$ seen in all the other cases.
This causes the right-hand side of (\ref{eq:varmol}) to contain the full Sobolev norm, rather than the semi-norm.
\end{proof}

{\it Remark \arabic{section}.\addtocounter{remark}{1}\Alph{remark}}.\label{rm:onlyw1a} 
In Lemma~\ref{lm:varmol} (\ref{eq:varmolch}) is only necessary for the case $k=2$.

{\it Remark \arabic{section}.\addtocounter{remark}{1}\Alph{remark}}.
Lemmas~\ref{lm:vardermol} and \ref{lm:varmol} will be applied for cubes $Q$ such that $h_Q \approx H_Q \approx \size(Q) \approx \size(Q_h)$.  In this setting the estimates yield the same factors of `$h$' as seen in the analog with fixed mollification radius.

\subsection{Cubic Covering of $\Omega$}\label{ss:cubiccovering}

An important idea in Section~\ref{sc:uniform} was the use of a cubic grid of size $h$ to cover the domain.  In the non-uniform case we seek to cover $\Omega$ with cubes such that each cube has side length proportional to $h(\bx)$ for $\bx \in Q$; see Figure~\ref{fg:covermeshNonUniform} for an example.  
This is possible due to restriction (\ref{eq:gradh}) and can be achieved through the simple algorithm given in Algorithm~\ref{al:cubesize}.

\begin{algorithm}
\caption{Cover $\Omega$ by cubes proportional to $h$.}\label{al:cubesize}
\begin{algorithmic}
\REQUIRE $\cQ = \{Q\}$ where $Q$ is a cube such that $\size(Q_i) > 4 \max_{\bx\in \Omega} h(\bx)$
\WHILE{$\exists Q_i\in \cQ $ such that $\size(Q_i) > 8 \max_{\bx\in Q_i} h(\bx)$}
\STATE Remove $Q_i$ from $\cQ$
\STATE Subdivide $Q_i$ into $2^d$ new cubes of size $\frac{\size(Q_i)}{2}$.
\STATE Insert all of the new cubes which intersect $\Omega$ into $\cQ$.
\ENDWHILE
\end{algorithmic}
\end{algorithm}

\begin{figure}
\begin{center}
\includegraphics[width=.35\textwidth]{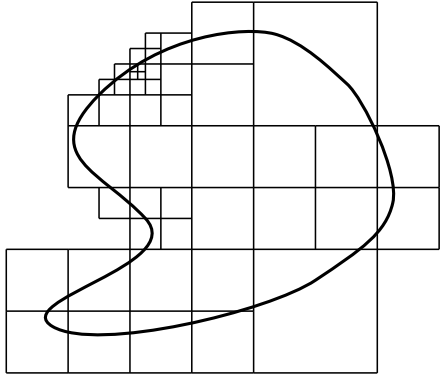}
\end{center}
\caption{An example cubic covering of a domain for some non-uniform sizing function.}\label{fg:covermeshNonUniform}
\end{figure}

First we assert that any neighboring cubes produced have proportional sizes.

\begin{proposition}\label{pr:cube21}
Let $\cQ = \{Q_i\}$ be the set of cubes resulting from Algorithm~\ref{al:cubesize}.  If $Q_i\cap Q_j \neq \emptyset$, then $\size(Q_i)\leq 2\,\size(Q_j)$.
\end{proposition}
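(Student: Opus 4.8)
The plan is to track how a cube's side length compares to the local sizing function through the refinement process, using the gradient bound (\ref{eq:gradh}) to control how much $h$ can vary across a cube and its neighbors. First I would record the two invariants maintained by Algorithm~\ref{al:cubesize}: every cube $Q$ that is ever inserted into $\cQ$ satisfies $\size(Q) \le 8 \max_{\bx\in Q} h(\bx)$ (otherwise it would have been subdivided — with the initial cube satisfying an even stronger bound since $\size > 4\max_\Omega h$ forces at least one subdivision, halving the ratio), and also $\size(Q) > 4 \max_{\bx\in Q'} h(\bx)$ is \emph{not} generally true; rather the relevant lower bound is that the \emph{parent} $Q^{\mathrm p}$ of $Q$ was subdivided, so $\size(Q^{\mathrm p}) = 2\,\size(Q) > 8\max_{\bx\in Q^{\mathrm p}} h(\bx) \ge 8 \max_{\bx\in Q} h(\bx)$, giving $\size(Q) > 4\max_{\bx\in Q} h(\bx)$ for every non-root cube in $\cQ$.

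Next I would bound the variation of $h$ over a small ball. If $\bx,\by$ lie in a common cube $Q$ or in two adjacent cubes $Q_i, Q_j$, then $|\bx - \by| \le \sqrt{d}\,(\size(Q_i) + \size(Q_j))$, and by (\ref{eq:gradh}) and the mean value theorem, $|h(\bx) - h(\by)| \le \frac{1}{16\sqrt d}|\bx-\by|$. Combined with the bounds from the previous paragraph, which say $\size$ of any cube is comparable (within a factor $8$, and at least $4$, of) its local $\max h$, this shows that $h(\bx)$ and $h(\by)$ are within a constant factor of each other whenever $\bx,\by$ are in adjacent cubes of $\cQ$, provided those adjacent cubes have comparable sizes — which is what we are trying to prove, so the argument must be set up to avoid circularity.

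The clean way to break the circularity: suppose for contradiction that $Q_i \cap Q_j \ne \emptyset$ but $\size(Q_i) > 2\,\size(Q_j)$, so in fact $\size(Q_i) \ge 4\,\size(Q_j)$ (sizes are powers of $2$ times the root size). Then $Q_j$ has a proper ancestor $Q_j'$ with $\size(Q_j') = \size(Q_i)$, and since $Q_j' \supset Q_j$ meets $Q_i$, while $Q_j'$ was subdivided (as it has a descendant in $\cQ$), we get $\size(Q_j') > 8 \max_{\bx \in Q_j'} h(\bx) \ge 8 h(\bx_0)$ for any $\bx_0 \in Q_j' \cap Q_i$. On the other hand $Q_i \in \cQ$ was not subdivided, so $\size(Q_i) \le 8 \max_{\bx\in Q_i} h(\bx)$. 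Pick $\bx_1 \in Q_i$ achieving this max; since $\bx_0,\bx_1 \in Q_i$ we have $|\bx_0 - \bx_1| \le \sqrt d\,\size(Q_i)$, hence by (\ref{eq:gradh}), $h(\bx_1) \le h(\bx_0) + \frac{1}{16}\size(Q_i)$. Chaining: $\size(Q_i) \le 8 h(\bx_1) \le 8 h(\bx_0) + \frac{1}{2}\size(Q_i) < \size(Q_j') \cdot 8/8 \cdot \tfrac{?}{} $ — more precisely $\size(Q_i) \le 8h(\bx_0) + \tfrac12 \size(Q_i)$ gives $\tfrac12\size(Q_i) \le 8 h(\bx_0)$, i.e. $\size(Q_i) \le 16 h(\bx_0) < 2\,\size(Q_j')= 2\,\size(Q_i)$, which is consistent and so not yet a contradiction; I'd need to be more careful and instead track $Q_j$ itself. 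Use that $Q_j$ has a child-of-$Q_j'$ ancestor — better, use the \emph{immediate} parent $Q_j^{\mathrm p}$ of $Q_j$: it was subdivided so $\size(Q_j^{\mathrm p}) > 8\max_{\bx\in Q_j^{\mathrm p}} h(\bx)$, and $\size(Q_j^{\mathrm p}) = 2\size(Q_j) \le \tfrac12 \size(Q_i)$. If $Q_j^{\mathrm p}$ meets $Q_i$ (it contains $Q_j$ which does), pick $\bx_2 \in Q_j^{\mathrm p}\cap Q_i$; then from $Q_i$'s non-subdivision and the gradient bound across $Q_i \cup Q_j^{\mathrm p}$ (diameter $\le \sqrt d (\size(Q_i) + \size(Q_j^{\mathrm p})) \le \tfrac{3}{2}\sqrt d\,\size(Q_i)$), $\size(Q_i) \le 8\max_{\bx\in Q_i} h(\bx) \le 8\big(h(\bx_2) + \tfrac{3}{32}\size(Q_i)\big)$, while $h(\bx_2) < \tfrac18 \size(Q_j^{\mathrm p}) \le \tfrac{1}{16}\size(Q_i)$; substituting, $\size(Q_i) \le \tfrac12 \size(Q_i) + \tfrac34 \size(Q_i)$ — I'll recheck the constant $16\sqrt d$ in (\ref{eq:gradh}) is exactly chosen to close this — and obtain a strict contradiction.

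\textbf{Main obstacle.} The delicate point is the bookkeeping of constants: the factor $\tfrac{1}{16\sqrt d}$ in (\ref{eq:gradh}) and the thresholds $4$ and $8$ in Algorithm~\ref{al:cubesize} must combine to produce a \emph{strict} inequality contradicting $\size(Q_i) \ge 4\,\size(Q_j)$; getting the diameter-of-union estimate and the chained gradient bounds tight enough (rather than off by a factor of $2$) is the crux, and is exactly why those specific constants were chosen. Everything else — the dyadic structure of the cubes, the invariants from the while-loop condition, the mean value inequality — is routine.
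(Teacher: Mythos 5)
Your overall route is the same as the paper's: use the parent's splitting condition to get the invariant $\size(Q)>4\max_{\bx\in Q}h(\bx)$ for any subdivided-from cube, use the gradient bound (\ref{eq:gradh}) to propagate smallness of $h$ from the small cube to the large one, and contradict the termination condition $\size(Q_i)\leq 8\max_{\bx\in Q_i}h(\bx)$ for the large cube. However, as written your argument does not close, and you acknowledge as much: your final chain gives $\size(Q_i)\le \tfrac12\size(Q_i)+\tfrac34\size(Q_i)=\tfrac54\size(Q_i)$, which is vacuous, not a contradiction. The loss comes from measuring the Lipschitz variation of $h$ over the \emph{union} $Q_i\cup Q_j^{\mathrm p}$, giving the diameter bound $\tfrac32\sqrt d\,\size(Q_i)$. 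That factor is not needed: the point at which $h$ is known to be small can be taken in the intersection, $\bx_2\in Q_i\cap Q_j\subset Q_i\cap Q_j^{\mathrm p}$, so every $\bx\in Q_i$ satisfies $|\bx-\bx_2|\le \sqrt d\,\size(Q_i)$ and only $\diam(Q_i)$ enters the gradient estimate. Then $\max_{\bx\in Q_i}h(\bx)\le h(\bx_2)+\tfrac1{16}\size(Q_i)<\tfrac14\size(Q_j)+\tfrac1{16}\size(Q_i)\le\tfrac18\size(Q_i)$, so $Q_i$ still satisfies the while-loop condition and would be split, contradicting $Q_i\in\cQ$ at termination. This is exactly the paper's computation (with the labels of the large and small cube interchanged), and it is why the constants $1/(16\sqrt d)$, $4$, and $8$ close with room to spare, with strictness inherited from the parent's strict splitting condition. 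Your first detour through the equal-size ancestor $Q_j'$ fails for the reason you noticed, and the parenthetical claim that the initial requirement $\size(Q)>4\max_{\bx\in\Omega}h(\bx)$ forces at least one subdivision is not justified (the loop tests $\max_{\bx\in Q}h$, not $\max_\Omega h$), but neither issue is needed once the intersection-point choice is made: in the contradiction setup $Q_j$ is strictly smaller than another cube, hence is not the root and has a parent, which is all the invariant requires.
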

\begin{proof}
Suppose $Q_i$ and $Q_j$ are adjacent cubes such that $\size(Q_i) \leq \frac{\size(Q_j)}{4}$.  Let $\bx_i \in Q_i \cap Q_j$ and let $\bx_j\in Q_j$.  Then,
\[
h(\bx_j) \leq h(\bx_i) + \frac{1}{16\sqrt{d}} |\bx_j - \bx_i| \leq \frac{\size(Q_i)}{4} + \frac{1}{16\sqrt{d}} \sqrt{d}\size(Q_j) \leq \frac{1}{8}\size(Q_j).
\]
Thus $Q_j$ is eligible to be split by Algorithm~\ref{al:cubesize} and we conclude that upon termination of the algorithm the lemma holds.
\end{proof}

The next proposition shows that for a compatible triangulation the simplices which intersect a particular cube are contained well inside the cube and its neighboring cubes.

\begin{proposition}\label{pr:cubeh}
Let $\cQ$ be the set of cubes resulting from Algorithm~\ref{al:cubesize}.  Then for all $\bx \in Q$, $4h(\bx) \leq \size(Q) \leq 16 h(\bx)$.
\end{proposition}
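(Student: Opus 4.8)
The plan is to track the side length of each surviving cube through Algorithm~\ref{al:cubesize} and relate it to the sizing function via the Lipschitz bound (\ref{eq:gradh}). The upper bound $\size(Q) \leq 16\,h(\bx)$ comes directly from the termination condition of the while loop, while the lower bound $4h(\bx) \leq \size(Q)$ comes from the initial condition on $\cQ$ combined with the fact that a cube is only ever halved (so its size at least halves at each split).

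First I would establish the upper bound. If $Q$ survives the algorithm and has parent $Q'$ (the cube split to create $Q$), then $Q'$ was split, so $\size(Q') > 8\max_{\by\in Q'} h(\by)$; hence $\size(Q) = \tfrac12\size(Q') > 4\max_{\by\in Q'}h(\by) \geq 4 h(\bx)$ for any $\bx\in Q\subset Q'$. That gives the left inequality. (If $Q$ is one of the initial cubes that was never split, its size exceeds $4\max_{\by\in\Omega}h(\by) \geq 4h(\bx)$ by the \texttt{REQUIRE} clause, so the bound holds there too.) For the right inequality, since $Q$ is \emph{not} split upon termination, $\size(Q) \leq 8\max_{\by\in Q} h(\by)$. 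It remains to convert $\max_{\by\in Q}h(\by)$ into a bound involving $h(\bx)$ for the \emph{particular} point $\bx\in Q$: by (\ref{eq:gradh}) and since $\diam(Q) = \sqrt{d}\,\size(Q)$, for any $\by\in Q$ we have $h(\by) \leq h(\bx) + \frac{1}{16\sqrt d}\sqrt d\,\size(Q) = h(\bx) + \tfrac{1}{16}\size(Q)$. Combining, $\size(Q) \leq 8 h(\bx) + \tfrac12\size(Q)$, so $\size(Q) \leq 16 h(\bx)$, as claimed.

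The main obstacle, such as it is, is the last bookkeeping step: the termination condition controls $\max_{\by\in Q}h$, not $h$ at an arbitrary fixed $\bx\in Q$, so one must absorb a $\tfrac12\size(Q)$ slack term using the gradient bound before solving for $\size(Q)$. The choice of the constant $1/(16\sqrt d)$ in (\ref{eq:gradh}) is exactly what makes this slack coefficient equal to $\tfrac12$ (rather than something $\geq 1$, which would make the argument fail); this is the reason for the specific constant, as noted in the remark following (\ref{eq:varmolch}). No deeper ideas are needed — everything else is immediate from the structure of the loop.
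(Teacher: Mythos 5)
Your proposal is correct and follows essentially the same route as the paper: the lower bound from the parent cube's splitting condition, and the upper bound from the termination condition combined with the Lipschitz bound (\ref{eq:gradh}), with the same constants appearing. The only cosmetic difference is that you apply the gradient bound to control $\max_{\by\in Q}h(\by)$ from above by $h(\bx)$ plus a $\tfrac12\size(Q)$ slack, whereas the paper bounds $h(\by)$ from below starting from a point where $h \geq \size(Q)/8$; these are trivial rearrangements of the same estimate (and your explicit handling of an initial cube that is never split is a minor extra care the paper omits).
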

\begin{proof}
Let $Q$ be any cube resulting from Algorithm~\ref{al:cubesize} and let $\hat Q$ be the cube which was subdivided to form $Q$.  Then 
\begin{equation}\label{eq:cubeh1}
2\,\size(Q) = \size(\hat Q) > 8 \max_{\bx\in \hat Q} h(\bx) > 8 \max_{\bx\in Q} h(\bx).
\end{equation}
There exists $\bx\in Q$ such that $h(\bx) > \frac{\size(Q)}{8}$ since $Q$ is not split further by the algorithm.  By requirement (\ref{eq:gradh}), for any $\by\in Q$, 
\begin{equation}\label{eq:cubeh2}
h(\by) \geq h(\bx) - \frac{1}{16\sqrt{d}}|\bx-\by| \geq \frac{\size(Q)}{16}.
\end{equation}
Combining (\ref{eq:cubeh1}) and (\ref{eq:cubeh2}) yields the result.  
\end{proof}

\begin{lemma}\label{lm:cubesize}
Let triangulation $\cT$ be compatible with $h$ and 
let $\cQ = \{Q_i\}$ be the set of cubes resulting from Algorithm~\ref{al:cubesize}.  Then for all $Q \in \cQ$,
\[
\bigcup_{K\in \cK(Q)} \bigcup_{\bx\in K} B(\bx,h(\bx)) \subset S_i := \bigcup_{Q_j\cap Q \neq \emptyset} Q_j,
\]
where $\cK(Q)$ denotes the set of simplices intersecting $Q$.  
\end{lemma}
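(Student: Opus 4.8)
The goal is to show that if $K$ is a simplex intersecting a cube $Q\in\cQ$, then the mollification neighborhood $\bigcup_{\bx\in K}B(\bx,h(\bx))$ is contained in $S_i$, the union of $Q$ with its (face/edge/vertex) neighbors. The plan is to bound two quantities: the diameter of $K$ itself, and the mollification radius $h(\bx)$ for $\bx\in K$, both in terms of $\size(Q)$, and then check that a point within that combined distance of $Q$ cannot escape the one-layer neighborhood $S_i$.

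First I would control $h_K$. Since $K$ intersects $Q$, pick $\bx_0\in K\cap Q$; by Proposition~\ref{pr:cubeh}, $h(\bx_0)\le\frac14\size(Q)$. Compatibility of $\cT$ with $h$ gives $h_K\le h(\bx_0)\le\frac14\size(Q)$, so $K$ has diameter at most $\frac14\size(Q)$ and in particular $K\subset \overline{B(\bx_0,\tfrac14\size(Q))}$. Next, for any $\bx\in K$ we have $|\bx-\bx_0|\le h_K\le\frac14\size(Q)$, so by the gradient bound (\ref{eq:gradh}),
\[
h(\bx)\le h(\bx_0)+\tfrac{1}{16\sqrt d}|\bx-\bx_0|\le \tfrac14\size(Q)+\tfrac{1}{16\sqrt d}\cdot\tfrac14\size(Q)\le \tfrac13\size(Q).
\]
Therefore every point of $\bigcup_{\bx\in K}B(\bx,h(\bx))$ lies within distance $h_K+\sup_{\bx\in K}h(\bx)\le \tfrac14\size(Q)+\tfrac13\size(Q)<\size(Q)$ of the point $\bx_0\in Q$.

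It then remains to argue that the set of points within distance $\size(Q)$ of a point of $Q$ is contained in $S_i$. The subtlety is that neighboring cubes in $\cQ$ may be larger than $Q$ (up to a factor $2$, by Proposition~\ref{pr:cube21}), but crucially they are never smaller by more than that factor, so the one-layer neighborhood $S_i$ around $Q$ has ``thickness'' at least $\size(Q)$ on every side. Concretely, any cube of $\cQ$ touching $Q$ has side length at least $\tfrac12\size(Q)$ — but this is the wrong direction; what I actually need is that the union of $Q$ and the cubes adjacent to it covers the inflated cube $Q'$ of side length $3\,\size(Q)$ concentric with $Q$... and here care is needed because adjacent cubes can be \emph{larger}. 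The clean way: the inflated region $\{\bz:\dist(\bz,Q)<\size(Q)\}$ is covered by $Q$ together with whichever cubes of $\cQ$ meet it, and any such cube necessarily shares at least a boundary point with $Q$ (since $\cQ$ tiles a neighborhood of $\Omega$ and cubes abut along full faces after 2:1 balancing via Proposition~\ref{pr:cube21}), hence belongs to the neighbor set defining $S_i$. Combining with the distance bound $<\size(Q)$ from the previous paragraph finishes the proof.

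The main obstacle is precisely this last geometric step: verifying that in a 2:1-balanced cubic partition the one-ring $S_i$ of a cube $Q$ contains the concentric inflated cube of three times the side length. This uses Proposition~\ref{pr:cube21} (no neighbor is more than twice as large, equivalently none is shrunk by more than half, so the layer of neighbors on each face extends out by at least $\tfrac12\size(Q)$ — and iterating across at most two neighbor cubes, or simply noting that since every cube of $\cQ$ has size at least $\tfrac12\size(Q)$ near $Q$, one layer already reaches distance $\ge\tfrac12\size(Q)$) together with the fact that $\size(Q)$-thick coverage suffices because our distance bound was comfortably below $\size(Q)$, not merely at it. I would state this as: since the mollification neighborhood of $K$ lies within $\tfrac7{12}\size(Q)<\size(Q)$ of $\bx_0\in Q$, and each neighbor cube in $S_i$ has side length at least $\tfrac12\size(Q)$ so that $S_i\supset\{\bz:\dist(\bz,Q)\le \tfrac12\size(Q)\}$... which is still not quite enough, so one in fact uses the full strength that the balanced partition's single neighbor layer around $Q$ covers a band of width at least $\size(Q)$ — a direct consequence of Proposition~\ref{pr:cube21} applied cube-by-cube around $\partial Q$. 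This bookkeeping, while elementary, is the crux; the Sobolev/mollification content is entirely in the earlier bounds $h_K\le\tfrac14\size(Q)$ and $h(\bx)\le\tfrac13\size(Q)$.
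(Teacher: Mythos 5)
Your preliminary bounds are fine and match the paper's: using a point $\bx_0\in K\cap Q$, compatibility plus Proposition~\ref{pr:cubeh} give $h_K\leq\size(Q)/4$, and the gradient bound (\ref{eq:gradh}) controls $h(\bx)$ on $K$. The genuine gap is in the final covering step, which you yourself flag as the crux and then resolve with a false claim. A 2:1-balanced covering only guarantees that cubes touching $Q$ have size at least $\tfrac12\size(Q)$, so the one-ring $S_i$ is only guaranteed to contain the band of ($\ell^\infty$-)width $\tfrac12\size(Q)$ around $Q$ --- not a band of width $\size(Q)$, and it is not true that every cube of $\cQ$ meeting $\{\bz:\dist(\bz,Q)<\size(Q)\}$ touches $Q$. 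Concretely, in 2D take $Q=[0,1]^2$, a neighbor $[1,1.5]\times[0,0.5]$ of size $\tfrac12$, and beyond it $[1.5,1.75]\times[0,0.25]$ of size $\tfrac14$: the balance condition is satisfied, yet the last cube meets the inflated region while sharing no point with $Q$. Since your distance bound from $\bx_0$ is $h_K+\max_{\bx\in K}h(\bx)\leq\tfrac{7}{12}\size(Q)>\tfrac12\size(Q)$, the argument as written does not close.

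The paper repairs exactly this by a case analysis on the cube $\hat Q$ containing the point $\bx\in K$ (which lies in $Q$ or a neighbor, since $\dist(\bx,Q)\leq h_K\leq\size(Q)/4$). If $\size(\hat Q)\leq\size(Q)$, then Proposition~\ref{pr:cubeh} applied in $\hat Q$ gives $h(\bx)\leq\size(Q)/4$, so $\by\in B(\bx,h(\bx))$ stays within $\size(Q)/2$ of $Q$ and the $\tfrac12\size(Q)$-band suffices. If instead $\size(\hat Q)=2\size(Q)$, one cannot keep the distance below $\size(Q)/2$, but then $\by$ necessarily lies in a cube of size at least $\size(Q)$ (it is within $\size(\hat Q)/4$ of $\hat Q$, whose neighbors all have size $\geq\size(Q)$), and a grid-aligned cube of size $\geq\size(Q)$ lying within distance strictly less than $\size(Q)$ of $Q$ must intersect $Q$; here the refined estimate $|\by-\bz|\leq\size(Q)\left(\tfrac12+\tfrac{1}{64\sqrt{d}}\right)$ with $\bz\in Q$ is what is needed. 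You need this (or an equivalent) dichotomy; a single worst-case distance bound cannot work because it exceeds the guaranteed thickness of the one-ring.
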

\begin{proof}
Let $K\in \cK(Q)$ and let $\bx \in K$.  Then $\bx$ belongs to $Q$ or a cube neighboring $Q$ since by Propositions \ref{pr:cube21} and  \ref{pr:cubeh} $\dist(\bx,Q) \leq h_K \leq \size(Q)/4 \leq \size(\hat Q)/2$ for any neighboring cube $\hat Q$.  Let $\hat Q$ denote the cube containing $\bx$ and let $\by \in B(\bx,h(\bx))$.  Also let $\bz$ denote the nearest point in $Q$ to $\bx$.  The result is shown in two cases depending on the size of $\hat Q$.  

\noindent \underline{Case 1}: $\size(\hat Q) = \size(Q)$ or $\size(\hat Q) = \size(Q)/2$.  Then 
\begin{align*}
\vsn{\by-\bz} & \leq \vsn{\by-\bx} + \vsn{\bx-\bz} \leq h(\bx) + h_K
  \leq \frac{\size(Q)}{4} + \frac{\size(Q)}{4} = \frac{\size(Q)}{2}.
\end{align*}
By the balance property (Proposition~\ref{pr:cube21}), this implies that $\by \in \bigcup_{Q_j\cap Q \neq \emptyset} Q_j$.  

\noindent \underline{Case 2}: $\size(\hat Q) = 2\, \size(Q)$.  Then 
\begin{align*}
\vsn{\by-\bz} & \leq \vsn{\by-\bx} + \vsn{\bx-\bz} \leq h(\bx) + h_K\\
 & \leq h(\bz) + \frac{\vsn{\bz-\bx}}{16\sqrt{d}} + \frac{\size(Q)}{4}
 \leq \size(Q) \left(\frac{1}{2} + \frac{1}{64\sqrt{d}}\right).
\end{align*}
The balance property (Proposition~\ref{pr:cube21}) ensures that $\by$ must belong to a cube of size greater than or equal to $\size(Q)$.  Since $\vsn{\by-\bz} \leq \size(Q)$, $\by \in \bigcup_{Q_j\cap Q \neq \emptyset} Q_j$.
\end{proof}

\subsection{Error Estimate}\label{ss:nonuniformee}

Now we have the necessary tools to prove an analog of Theorem~\ref{th:w1uniform} for a non-uniform sizing function and compatible triangulation.  

\begin{theorem}\label{th:nonuniformee}
Let $k\in\{0,1\}$. For sizing functions $h$ satisfying (\ref{eq:gradh}) and (\ref{eq:varmolch}) paired with any compatible mesh $\cT$, let $\cQ$ be the set of cubes produced by Algorithm~\ref{al:cubesize}.  Then for all $u\in W^{1,p}(\Omega)$,
\begin{equation}\label{eq:roughmolnon}
\wmpn{u - \overline{\Xi}_{h} u}{k}{p}{\Omega} \apprle \left(\sum_{Q_i\in\cQ} \left(\frac{h_i}{(\cos\theta_i^*)^k}\right)^p \wmpn{u}{k+1}{p}{Q_i}^p\right)^{\frac{1}{p}}
\end{equation}
where $h_i = \max_{\bx\in Q_i} h(x)$ and $\theta_i^* = \max_{T\cap S_i \neq \emptyset} \theta_T$.
\end{theorem}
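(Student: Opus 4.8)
The plan is to follow the architecture of the proof of Theorem~\ref{th:w1uniform}, replacing the uniform cubic grid by the graded covering $\cQ$ from Algorithm~\ref{al:cubesize} and replacing the fixed-radius mollifier $\cM_h$ by the variable-radius mollifier $\coM_h$. First I would split the error by the triangle inequality through the intermediate function $\coM_h u$:
\[
\wmpn{u - \overline{\Xi}_{h} u}{k}{p}{\Omega} \leq \wmpn{u - \coM_h u}{k}{p}{\Omega} + \wmpn{\coM_h u - \Pi_1 \coM_h u}{k}{p}{\Omega}.
\]
Both terms are then localized over the cubes $Q_i \in \cQ$ by writing the $p$-th power of each $\Omega$-norm as a sum of $p$-th powers of $Q_i$-norms (the $Q_i$ have bounded overlap, in fact they tile $\Omega$ up to measure zero). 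On a fixed cube $Q_i$, Proposition~\ref{pr:cubeh} gives $h_i \approx \size(Q_i)$, so the cube plays exactly the role that the size-$h$ cube played in the uniform proof.

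For the first term, on each $Q_i$ I would apply Lemma~\ref{lm:vardermol} (the $k\in\{0,1\}$ cases), which gives
$\wmpsn{u - \coM_h u}{k}{p}{Q_i} \apprle H_{Q_i} \wmpn{u}{k+1}{p}{(Q_i)_h}$ with $H_{Q_i} = \max_{\bx\in Q_i} h(\bx) = h_i$; by Lemma~\ref{lm:cubesize} the enlarged set $(Q_i)_h$ is contained in $S_i$, so this term contributes $\apprle h_i \wmpn{u}{k+1}{p}{S_i}$, which is absorbed into the right-hand side of (\ref{eq:roughmolnon}) (note $1 \leq 1/(\cos\theta_i^*)^k$). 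For the second term, exactly as in Theorem~\ref{th:w1uniform} I would pick $q>d$, apply Proposition~\ref{pr:lpq} to pass from the $L^p$ to the $L^q$ norm on $Q_i$ (here I use $|Q_i| = \size(Q_i)^d \approx h_i^d$), split the $L^q$ norm over the simplices $K \in \cK(Q_i)$, apply Jamet's estimate (Theorem~\ref{th:jamet}, valid since $k+1-k = 1 > d/q$) to each $K$ picking up the factor $h_K/(\cos\theta_K)^k \leq h_i/(\cos\theta_i^*)^k$ — here I use compatibility ($h_K \leq h(\bx) \leq h_i$ for $\bx\in K\subset S_i$ via Proposition~\ref{pr:cubeh}) and the definition of $\theta_i^*$ — and finally apply Lemma~\ref{lm:varmol} to return $\wmpsn{\coM_h u}{k+1}{q}{K}$ to $\wmpn{u}{k+1}{p}{S_i}$ at the cost of the factor $h_i^{-d}|S_i|^{(q-1)/q}|Q_i|^{1/q}$; since $|S_i|\approx|Q_i|\approx h_i^d$ (bounded number of neighbors, all of comparable size by Proposition~\ref{pr:cube21}) the $h$-powers collapse to the expected $h_i^{d(p-q)/(pq)}$, and raising to the $1/q$ and combining with the $|Q_i|^{(q-p)/p}$ from Proposition~\ref{pr:lpq} yields $\wmpsn{\coM_h u - \Pi_1\coM_h u}{k}{p}{Q_i} \apprle \frac{h_i}{(\cos\theta_i^*)^k}\wmpn{u}{k+1}{p}{S_i}$.

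Summing the local estimates over $i$ gives $\wmpn{u - \overline{\Xi}_h u}{k}{p}{\Omega}^p \apprle \sum_i \left(\frac{h_i}{(\cos\theta_i^*)^k}\right)^p \wmpn{u}{k+1}{p}{S_i}^p$, and the last step replaces $S_i$ by $Q_i$: each $Q_j$ appears in $S_i$ for only a bounded number of indices $i$ (by Proposition~\ref{pr:cube21} a cube has boundedly many same-or-larger neighbors), and for those $i$ the weight $h_i/(\cos\theta_i^*)^k$ is comparable to $h_j/(\cos\theta_j^*)^k$ — $h_i\approx h_j$ by Propositions~\ref{pr:cube21}/\ref{pr:cubeh}, and $\theta_i^*$ is the max of $\theta_T$ over $T$ meeting $S_i \supseteq Q_j$ so it dominates the corresponding quantity at $Q_j$ — whence the sum over the $S_i$ is controlled by the sum over the $Q_j$, giving (\ref{eq:roughmolnon}). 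The main obstacle is bookkeeping: keeping track of which enlarged region ($(Q_i)_h$, $S_i$, the union of $B(\bx,h(\bx))$ over $K\in\cK(Q_i)$) appears at each step and verifying via Lemma~\ref{lm:cubesize} and Propositions~\ref{pr:cube21}--\ref{pr:cubeh} that all of them sit inside $S_i$ while the relevant volumes and sizes stay mutually comparable, so that the graded mesh behaves, cube by cube, just like the uniform one and the $h$-exponents cancel as they must.
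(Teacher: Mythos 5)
Your overall architecture is exactly the paper's: triangle inequality through $\coM_h u$, localization over the cubes of Algorithm~\ref{al:cubesize}, Lemma~\ref{lm:vardermol} for the mollification error, Proposition~\ref{pr:lpq} plus Theorem~\ref{th:jamet} plus Lemma~\ref{lm:varmol} (with Lemma~\ref{lm:cubesize} controlling the enlarged regions) for the interpolation term, and a final re-indexing from $S_i$ to $Q_i$. However, the last step as you state it has a genuine gap. In the local estimate you replace $\theta_K$ by $\theta_i^* = \max_{T\cap S_i\neq\emptyset}\theta_T$, and in the re-indexing you claim that for neighboring cubes $Q_i\cap Q_j\neq\emptyset$ the weights $h_i/(\cos\theta_i^*)^k$ and $h_j/(\cos\theta_j^*)^k$ are comparable, justified by the observation that $\theta_i^*$ \emph{dominates} the corresponding quantity at $Q_j$ since $S_i\supseteq Q_j$. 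That inequality points the wrong way: after exchanging the order of summation you need, for each $j$, $\sum_{i:\,Q_i\cap Q_j\neq\emptyset}\bigl(h_i/(\cos\theta_i^*)^k\bigr)^p \apprle \bigl(h_j/(\cos\theta_j^*)^k\bigr)^p$, i.e., an \emph{upper} bound on $1/\cos\theta_i^*$ in terms of $1/\cos\theta_j^*$. This fails in general because $S_i\not\subseteq S_j$: a nearly degenerate simplex may touch a neighbor of $Q_i$ on the side away from $Q_j$, driving $\theta_i^*$ toward $\pi/2$ while $\theta_j^*$ stays bounded away from it. As written, your chain of inequalities therefore ends with angles measured over a two-layer neighborhood of each cube, which is not the quantity $\theta_j^*$ appearing in (\ref{eq:roughmolnon}).

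The repair is the one the paper uses and costs nothing: only simplices $K\in\cK(Q_i)$ occur in the local estimate, so bound $\theta_K$ by $\theta_i := \max_{T\cap Q_i\neq\emptyset}\theta_T$ rather than by $\theta_i^*$; then in the re-indexing, $Q_i\subset S_j$ for every neighboring cube $Q_j$, hence $\theta_i\leq\theta_j^*$, and together with $h_i\leq 2h_j$ and the bounded number of neighbors the weights fold into $h_j/(\cos\theta_j^*)^k$ as required. Apart from this, your argument coincides with the paper's proof; two small bookkeeping remarks: when invoking Lemma~\ref{lm:varmol} the roles of $p$ and $q$ must be swapped, so the factor is $h_i^{-d}\vsn{S_i}^{(p-1)/p}\vsn{\cdot}^{1/q}$ (which does collapse to the power $h_i^{d(p-q)/(pq)}$ you state, unlike the factor you wrote), and $q$ should be chosen larger than $\max(d,p)$ so the lemma's hypothesis $p\leq q$ holds.
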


Note that this result is uniform over the class of sizing functions satisfying (\ref{eq:gradh}) and (\ref{eq:varmolch}), i.e.,
 the inequality depends only upon $\Csize$ and not the specific sizing function or mesh selected.

\begin{proof}
The proof structure is very similar to Theorem~\ref{th:w1uniform}. First the estimate is divided using the triangle inequality and the intermediate function $\Xi_{h} u$.
\begin{equation}\label{eq:nonuniformproof}
\wmpsn{u - \Xi_{h} u}{k}{p}{Q_i} \leq \wmpsn{u - \coM_h u}{k}{p}{Q_i} + \wmpsn{\coM_h u - \Xi_{h} u}{k}{p}{Q_i}.
\end{equation}
Let $S_i$ denote the union of $Q_i$ with its neighboring cubes.  By Lemmas~\ref{lm:vardermol} and \ref{lm:cubesize}, 
\[
\wmpsn{u - \coM_h u}{k}{p}{Q_i} \leq h_i \wmpn{u}{k+1}{p}{S_i}.
\]
Next the second term in (\ref{eq:nonuniformproof}) is estimated. Select a value $q > d$. Proposition~\ref{pr:lpq} and Theorem~\ref{th:jamet} are applied for the smooth function $\coM_h u$:
\begin{align*}
\wmpsn{\coM_h u - \Pi_{1} \coM_h u}{k}{p}{Q_i}^q
 & \apprle \size(Q_i)^{\frac{q-p}{p}} \sum_{K\in \cK_i} \wmpsn{\coM_h u - \Pi_{1} \coM_h u}{k}{q}{K}^q\\
 & \apprle \size(Q_i)^{\frac{q-p}{p}}\sum_{K\in \cK_i} \left(\frac{h_K}{(\cos \theta_K)^k}\right)^q\wmpsn{\coM_h u}{k+1}{q}{K}^q.
\end{align*}
Lemma~\ref{lm:cubesize} is applied to combine terms in the summation and Lemma \ref{lm:varmol} is used to return the estimate to the correct norm. Letting $\theta_i = \max_{T\cap Q_i \neq \emptyset} \theta_T$, 
\begin{align}
 \wmpsn{\coM_h u - \Pi_{1} \coM_h u}{k}{p}{Q_i}^q  &\apprle \size(Q_i)^{\frac{q-p}{p}} \left(\frac{h_i}{(\cos \theta_i)^k}\right)^q \wmpsn{\coM_h u}{k+1}{q}{\cup_{K\in\cK_i} K}^q\notag \\
& \apprle  \size(Q_i)^{\frac{q-p}{p}} \left(\frac{h_i}{(\cos \theta_i)^k}\right)^q h_i^{\frac{p-q}{p}} \wmpn{u}{k+1}{p}{S_i}^q \notag\\
& \apprle  \left(\frac{h_i}{(\cos \theta_i)^k}\right)^q \wmpn{u}{k+1}{p}{S_i}^q. \label{eq:almostthere}
\end{align}  
Inequality (\ref{eq:almostthere}) follows from Proposition~\ref{pr:cubeh} which guarantees that $h_i^{-1} \size(Q_i) \apprle 1$.  
Finally summation over all cubes and again utilizing Proposition~\ref{pr:cubeh} finishes the result:
\begin{align*}
\sum_i  \wmpn{u - \overline{\Xi}_{h} u}{k}{p}{Q_i}^p & \apprle \sum_i \left(\frac{h_{Q_i}}{(\cos \theta_i)^k}\right)^p \wmpn{u}{k+1}{p}{S_i}^p\\
& \apprle \sum_i \left[\sum_{j {\rm \, s.t.\,} Q_j\cap Q_i \neq \emptyset} \left(\frac{h_{Q_i}}{(\cos \theta_i)^k}\right)^p\right] \wmpn{u}{1}{p}{Q_i}^p \\
&  \apprle 2^{(2d+1)p} \sum_i \left(\frac{h_{Q_i}}{(\cos \theta_i^*)^k}\right)^p \wmpn{u}{1}{p}{Q_i}^p.
\end{align*}
The final inequality results from the fact that there are fewer than $4^d$ cubes intersecting $Q_i$ and for any such cube $Q_j$, $h_{Q_j} \leq 2h_{Q_i}$.  
\end{proof}

{\it Remark \arabic{section}.\addtocounter{remark}{1}\Alph{remark}}.
Due to Remark~3.E, the $k=0$ case of Theorem~\ref{th:nonuniformee} does not depend upon (\ref{eq:varmolch}). This means that $L^p$ error estimates can be established for Lipschitz sizing functions (which, recalling Remark~3.C, result from adaptive refinement in many common settings).

{\it Remark \arabic{section}.\addtocounter{remark}{1}\Alph{remark}}.
Similarly to Theorem~\ref{th:w1uniform}, the extension domain property is also important to Theorem~\ref{th:nonuniformee}.  The usage is somewhat implicit: the continuous extension property is used to define $u$ on $Q_i \not\subset \Omega$.  

\section{Final Remarks}\label{sc:final}
\setcounter{remark}{0}

To extend the maximum angle condition and its generalizations to less regular data Section~\ref{sc:nonuniform} establishes ply-independent error estimates for a linear average interpolant. 
Broadly, this serves to demonstrate that requirements on shape regularity and data regularity can be weakened simultaneously without the inherent trade-off seen in previous analyses.  
Specifically, these results extend optimal error estimates under the maximum angle condition to several situations for which it was previously not shown: linear interpolation of functions in $W^{1,p}(\Omega)$ and (when $1\leq d/p$) $W^{2,p}(\Omega)$.  
The latter result is most important in three dimensions where the maximum angle condition does not hold for the Lagrange interpolant when $p\leq 2$.  

We have not addressed the question of boundary conditions. 
For error estimates in the $L^p$-norm, the function can be explicitly adjusted to match a homogeneous boundary condition. 
In \cite{CW08} averaging regions near the boundary are shifted outside the domain where an explicit extension by zero is used. 
When only considering the more straightforward scalar functions, an explicit cutoff can also accomplish the task~\cite{Ra09}. 
However, these manipulations corrupt error estimates of the derivatives and fully generalizing (\ref{eq:roughmolnon}) to preserve boundary data requires an alternative approach.
Ideally a mollification procedure analogous to the Scott and Zhang interpolant~\cite{SZ90} can be constructed which averages over a lower dimensional manifold so that only boundary data is taken into account.
Na\"ive approaches in this direction have proven unsuccessful.

\begin{figure}
\begin{tabular}{cc}
\parbox{.48\textwidth}{\includegraphics[width=.48\textwidth]{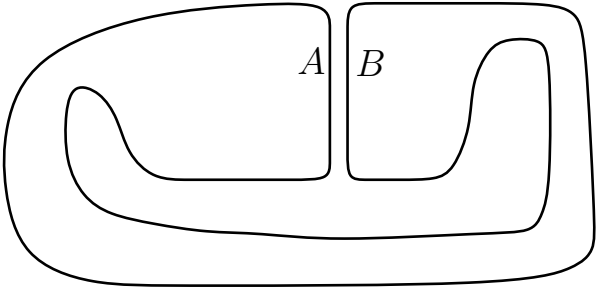}} &
\parbox{.48\textwidth}{\caption{Requiring a globally defined sizing function can lead to restrictions associated with non-local portions of the boundary; e.g., a small mesh in region $A$ will force a reasonably small mesh also in region $B$.}\label{fg:nonextension}}
\end{tabular}
\end{figure}

The extension domain property has been used somewhat implicitly throughout the arguments presented posing some limitations on the set of admissible meshes. 
Since the sizing function is defined globally, mesh size in non-local portions of the domain can impact the maximum mesh size; see Figure~\ref{fg:nonextension}. 
These limitations are often ignored in the analysis as asymptotically (as $h$ becomes small) the impact disappears since the domain is fixed. 
Other mollification-based average interpolants face similar limitations. 
For example, the construction of Christiansen and Winther explicitly construct a small extension region outside the domain boundary and mesh elements are assumed to be smaller than the size of this extension region.
Ideally the domain could be embedded in a manifold in which the non-local portions are far away (or more technically the extension operator has a much smaller constant).
While ad hoc methods for doing this are apparent for simple examples, a complete theory for general domains is not.

In some cases, a higher-order variant of Theorem~\ref{th:nonuniformee} may be necessary.  
Recall that for the expected convergence rate in the $W^{m,p}$-norm, Jamet's theorem requires $k + 1 - m > \frac{d}{p}$, where $k+1$ is the order of interpolation used and $d$ is the spatial dimension.  
Thus average interpolation is needed when $d$ is large, $p$ is near $1$, or $m$ is near $k$, i.e., estimates on higher derivatives are desired.  
Extending Theorem~\ref{th:nonuniformee} to cover these cases requires a higher-order mollification procedure.  While this mollifier can be defined, precise technical requirements under which the interpolation error estimate holds are still unknown.  

\section*{Acknowledgments} The author gratefully acknowledges Noel Walkington for many useful discussions
and the anonymous reviewers for several insightful comments.


\bibliographystyle{siam}
\bibliography{angles}

\appendix

\section{Coplanarity vs. the Maximum Angle Condition}\label{ap:copmac}

\begin{proof}(Proposition~\ref{pr:jametmac})
For this proof we omit the subscript $K$ on $\theta_K$ and $\psi_K$ without ambiguity.
First, we will bound $\psi$ from above by a function of $\theta$ to establish that if $\theta$ is bounded away from $\pi/2$, then $\psi$ is bounded away from $\pi$. 
This will be shown by direct construction: for a tetrahedron with maximum angle $\psi$, we will construct a particular vector $\hat{\bm\xi}$ which ensures that $\theta$ is sufficiently large. 
\begin{wrapfigure}{r}{.5\textwidth}
\psfrag{xihat}{$\hat{\bm\xi}$}
\psfrag{psi}{$\psi$}
\psfrag{bv1}{$\bv_1$}
\psfrag{bv2}{$\bv_2$}
\psfrag{bv3}{$\bv_3$}
\psfrag{bv4}{$\bv_4$}
\includegraphics[width=.5\textwidth]{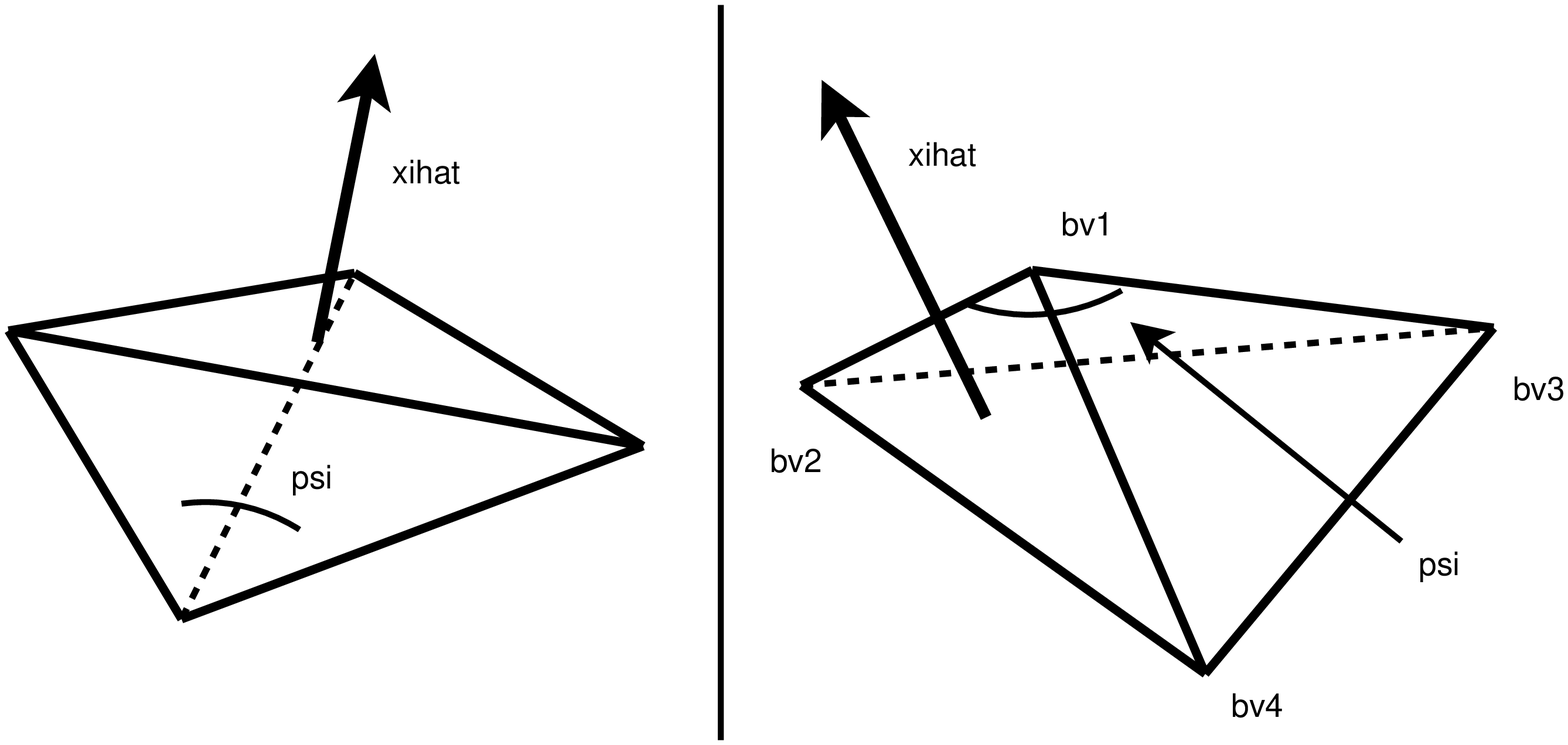}
\caption{In the proof of Proposition~\ref{pr:jametmac}, when bounding $\psi$ in terms of $\theta$, a specific $\hat{\bm\xi}$ can be constructed explicitly when $\psi$ is realized by a large dihedral angle (left) or a large angle in a face (right).}\label{fg:psitheta}
\end{wrapfigure}
Construction is divided into two cases. 
First we suppose that $\psi$ is realized by a dihedral angle of the tetrahedron.
Considering this largest dihedral angle, all vertices of the tetrahedron are contained in two planes that meet at angles of $\psi$ and $\pi-\psi$. 
Selecting a vector that points ``away'' from these planes demonstrates
\begin{equation}\label{thetapsi2}
\theta \geq \psi/2.
\end{equation}
Next we suppose that $\psi$ is realized by an angle in one of the faces of the tetrahedron. 
Let $\bv_1$ be the vertex at which the angle $\psi$ occurs, let $\bv_2$ and $\bv_3$ denote the other two vertices of that triangle and let $\bv_4$ denote the last vertex; see Fig.~\ref{fg:psitheta}. 
Moreover, without loss of generality we can assume that $|\bv_2-\bv_4| \leq |\bv_3-\bv_4|$, i.e., $\bv_4$ is nearer to $\bv_2$ than $\bv_3$.
Let $\hat{\bm\xi}$ be the unit normal vector to the triangle with vertices $\bv_1$, $\bv_2$ and $\bv_4$. 
To bound $\theta$, first observe that $\hat{\bm\xi}$ is orthogonal to three of the six lines containing edges of the tetrahedron. 
Two other edges belong to the triangle with vertices $\bv_1$, $\bv_2$, $\bv_3$, i.e., the triangle with a largest angle $\psi$. 
This large angle ensures that for any unit vector $\bu_i$ in the direction of either of these two lines, $\hat{\bm\xi} \cdot \bu_i \geq \alpha-\pi/2$. 
Lastly, we consider unit vectors in the direction of the line through $\bv_3$ and $\bv_4$. The triangle inequality implies
\begin{align*}
\left|\hat{\bm\xi} \cdot \left(\bv_3-\bv_4\right) \right|  &  \leq \left|\hat{\bm\xi} \cdot \left(\bv_3-\bv_2\right) \right| + \left|\hat{\bm\xi} \cdot \left( \bv_2- \bv_4\right) \right| = \left|\hat{\bm\xi} \cdot \left(\bv_3-\bv_2\right) \right|,
\end{align*}
where the second term in the sum is zero by construction of $\hat{\bm\xi}$ orthogonal to the a plane containing $\bv_2$ and $\bv_4$. 
Recalling our earlier argument, $\left|\hat{\bm\xi} \cdot \left(\bv_3-\bv_2\right) \right| \leq \left|\bv_3-\bv_2\right| \cos\left(\psi - \pi/2\right)$. Thus we can estimate the angle between $\hat{\bm\xi}$ and the line containing $\bv_3$ and $\bv_4$. 
\begin{align}\label{thetapsi3a}
\frac{\left|\hat{\bm\xi} \cdot \left(\bv_3-\bv_4\right) \right|}{\left|\bv_3-\bv_4 \right|} & \leq \frac{\left|\bv_3-\bv_2\right| \cos\left(\psi - \pi/2\right)}{\left|\bv_3-\bv_4 \right|} \leq 2 \cos\left(\psi - \pi/2\right).
\end{align}
The final inequality above follows from our definition of $\bv_3$: $\left|\bv_3-\bv_2\right| \leq \left|\bv_3-\bv_4\right| + \left|\bv_4-\bv_2\right| \leq 2 \left|\bv_3-\bv_4\right|$. 
We thus conclude that
\begin{align}\label{thetapsi3}
\theta \geq \min_i \hat{\bm\xi} \cdot \bu_i \geq \arccos \left( 2\cos\left(\psi - \pi/2\right)\right).
\end{align}
Considering the two expressions (\ref{thetapsi2}) and (\ref{thetapsi3}), we see that if $\theta$ is bounded away from $\pi/2$, then $\psi$ is bounded away from $\pi$.

Next we establish a reverse inequality. 
The crux of the argument is contained in an inequality from~\cite{AADL2011}.
Defining
$m := \sin\frac{\pi-\psi}{2}$,
Acosta et al.~assert that there are unit vectors $\bu_1$, $\bu_2$, $\bu_3$ in the directions of the edges of the tetrahedron such that 
\begin{align}\label{eq:fromacosta}
\left|(\bu_1\times\bu_2)\cdot \bu_3\right| \geq m^3. \hspace{.3in}\textnormal{\cite[p. 147]{AADL2011}}
\end{align}
Letting ${\bm\xi}$ be an arbitrary unit vector, we consider the decomposition ${\bm\xi} = a (\bu_1\times\bu_2) + b \bu_{12}$ where $\bu_{12}$ is a unit vector in the plane spanned by $\bu_1$ and $\bu_2$. Since these are orthgonal subspaces, $a^2+b^2 = 1$ and thus $a \geq 1/\sqrt{2}$ or $b \geq 1/\sqrt{2}$. 
If $a \geq 1/\sqrt{2}$, then
\begin{align}\label{thetapsi7}
 \left|{\bm \xi} \cdot \bu_3\right| \geq a\left|(\bu_1\times\bu_2)\cdot \bu_3\right| \geq  \frac{m^3}{\sqrt{2}}.
\end{align}
Otherwise $b \geq 1/\sqrt{2}$. 
Letting $\gamma$ denote the \emph{non-acute} angle between the lines containing $\bu_1$ and $\bu_2$, we observe that (\ref{eq:fromacosta}) implies that $\sin\left(\pi-\gamma\right) \geq m^3$ or $\gamma \leq \pi - \arcsin(m^3)$. 
Also the angle between $\bu_{12}$ and the line in the direction of either $\bu_1$ or $\bu_2$ is no larger than $\gamma/2$.
 Thus,
\begin{align}\label{thetapsi8}
 \max_{i\in\{1,2\}} \left|{\bm \xi} \cdot \bu_i\right| \geq b\cos \left(\frac{\gamma}{2}\right) \geq \frac{1}{\sqrt{2}}\cos \left( \frac{\pi - \arcsin(m^3)}{2}\right). 
\end{align}
Combining (\ref{thetapsi7}) and (\ref{thetapsi8}), 
\begin{align*}
\theta = \max_{|{\bm\xi}|=1} \min_i \arccos {\bm\xi} \cdot \bu_i & \leq\arccos\left( \max \left\{\frac{m^3}{\sqrt{2}}, \frac{1}{\sqrt{2}}\cos \left( \frac{\pi - \arcsin(m^3)}{2}\right) \right\}\right). 
\end{align*}
Despite the complex form, this is the desired inequality.
Specifically, if $\psi$ is bounded away from $\pi$, then $m$ is bounded away from zero.
Since all the functions we consider are continuous, it follows that the argument of $\arccos$ is also bounded away from zero.
Thus $\theta$ is bounded away from $\pi/2$. 
\end{proof}

\section{Mollification}\label{ap:mol}
Proofs of several propositions in Section~\ref{ss:mollifiers} are provided here.

\begin{proof}(Proposition~\ref{pr:dermol})
Define $p'$ by $\frac{1}{p} + \frac{1}{p'} = 1$ and let $w\in L^{p'}(\R^d)$ be an arbitrary function.   
If $u\in C_c^\infty(\R^d)$ then
\begin{align*}
\irn |(u(\bx) - &\cM_h u(\bx) ) w(\bx) | \d \bx \\
 & = \irn \left|\left(u(\bx) - \irn u(\bx-\by) \rho_h(\by) \d \by\right)  w(\bx)\right| \d \bx\\
 & = \irn \left|\left(\irn  (u(\bx) -  u(\bx-\by)) \rho_h(\by) \d \by\right)  w(\bx) \right| \d \bx\\
 & = \irn \left| \left(\irn  \left( \int_0^1 \frac{\d}{\d s} u(\bx - s\by)) \d s \right) \rho_h(\by) \d \by\right)  w(\bx) \right|\d \bx.
\end{align*}
The final equality results from the fundamental theorem of calculus.  In the support of mollifier $\rho_h$, $|\by| \leq h$ so $|\by \rho_h(\by)|\leq h\rho_h(\by)$. Then H\"older's inequality can be applied:
\begin{align*}
\irn |(u(\bx) - &\cM_h u(\bx) ) w(\bx) | \d \bx \\
 & = \irn \left| \left(\irn  \left( \int_0^1 -\by \cdot \nabla u(\bx - s\by) \d s \right) \rho_h(\by) \d \by\right)  w(\bx) \right| \d \bx\\
 & \leq  \int_0^1 \irn \irn  \left|-\by \cdot \nabla u(\bx - s\by) \rho_h(\by)   w(\bx) \right| \d \bx \d \by \d s\\
 & \leq  \int_0^1 \irn  h \rho_h(\by) \left(\irn \vsn{\nabla u(\bx - s\by)}^p \d \bx \right)^\frac{1}{p}  \left(\irn  \vsn{w(\bx)}^{p'}\d \bx \right)^\frac{1}{p'} \d \by \d s\\
 & = h \vn{\nabla u}_{L^p(\R^d)}\vn{w}_{L^{p'}(\R^d)}.
\end{align*}
By selecting $w = \left(u - \cM_h u\right)^{p-1}$, the left-hand side of the above estimate becomes $\lpn{u-\cM_h u}{p}{\R^d}^p$.  Since $p'(p-1) = p$, $\vn{w}_{L^{p'}(\R^d)} = \vn{u-\cM_h u}_{L^p(\R^d)}^{p-1}$.  Dividing both sides by $\vn{w}_{L^{p'}(\R^d)}$ gives the desired inequality.  Density of smooth functions in $W^{1,p}(\Omega)$ is used to extend this result to the full space.  
\end{proof}


\begin{proof}(Proposition~\ref{pr:lpqmol})
Let $\hat u(x) := u(x) \chi_{Q_h}(x)$ where $\chi_{Q_h}$ is the characteristic function of $Q_h$.  Applying Young's inequality to $\cM_h \hat u$ gives
\[\lpn{\cM_h \hat u}{p}{\R^d} \apprle \lpn{\rho_h}{\frac{pq}{pq+q-p}}{\R^d} \lpn{\hat u}{q}{\R^d}.\]
Since $\cM_h \hat u \equiv \cM_h u$ on $Q$, $\lpn{\cM_h u}{p}{Q} = \lpn{\cM_h \hat u}{p}{Q} \leq \lpn{\cM_h \hat u}{p}{\R^d}$.  Then recognizing that $\lpn{\hat u}{q}{\R^d} = \lpn{\hat u}{q}{Q_h}$ and computing the norm of $\rho_h$ yields the desired estimate.
\end{proof}

\end{document}